\newtheorem{theorem}{Theorem}[section]
\theoremstyle{plain}
\newtheorem{corollary}{Corollary}[section]
\newtheorem{definition}{Definition}[section]
\newtheorem{notation}{Notation}
\newtheorem{proposition}{Proposition}[section]
\newtheorem{remark}{Remark}[section]
\newtheorem{summary}{Summary}
\numberwithin{equation}{section}
\begin{document}
\def\dbar{\leavevmode\hbox to 0pt{\hskip.2ex \accent"16\hss}d}

\title[Geometric conemanifold structures on $\mathbb{T}_{p/q}$]{Geometric conemanifold structures on $\mathbb{T}_{p/q}$,
the result of $p/q$ surgery in the left-handed trefoil knot $\mathbb{T}$}
\author[m.t.Lozano]{Mar\'{\i}a Teresa Lozano}
\address[M.T. Lozano]{IUMA, Universidad de Zaragoza \\
Zaragoza, 50009, Spain}
\email[M.T. Lozano]{tlozano@unizar.es}
\thanks{Partially supported by grant MTM2010-21740-C02-02.}
\author[J.M. Montesinos-Amilibia]{Jos\'{e} Mar\'{\i}a Montesinos-Amilibia}
\address[J.M. Montesinos]{Dto. Geometr\'{\i}a y Topolog\'{\i}a \\
Universidad Complutense, Madrid 28080 Spain}
\email[J.M. Montesinos]{jose{\_}montesinos@mat.ucm.es}
\thanks{Partially supported by grant MTM2012-30719.}
\date{\today }
\subjclass[2000]{Primary 20G20, 53C20; Secondary 57M50, 22E99}
\keywords{quaternion algebra, Lie group, Riemannian geometry}

\begin{abstract}
As an example of the transitions between some of the eight geometries of
Thurston, investigated in \cite{LM2014}, we study the geometries supported by the
cone-manifolds obtained by surgery on the trefoil knot with singular set the
core of the surgery. The geometric structures are explicitly constructed.
The most interesting phenomenon is the transition from $SL(2,\mathbb{R})$-geometry
to $S^{3}$-geometry through Nil-geometry. A plot of the different
geometries is given, in the spirit of the analogous plot of Thurston for the
geometries supported by surgeries on the figure-eight knot.
\end{abstract}

\maketitle
\tableofcontents

\section{Introduction}
As an example of the transitions between some of the eight geometries of
Thurston, investigated in \cite{LM2014}, we consider those supported by the
cone-manifolds obtained by surgery on the trefoil knot with singular set the
core of the surgery. These are Seifert manifold and the singularity is a
fiber. We remark that analogous constructions can be performed with any
torus-knot or link.

To perform the construction of these geometries we proceed by steps. The
first step is to construct the holonomy maps defined in the base of the
Seifert fibration. This base is the $2$-orbifold $S_{236}$. This symbol
denotes the $2$-sphere with three singular points of isotropies of orders $2$%
, $3$ and $6$, respectively. This $2$-sphere admits geometric structures
with three singular cone-points with cone-angles of $2\pi /2$, $2\pi /3$ and
$\alpha $. For the value $\alpha =2\pi /6$ the geometry is Euclidean; for
values less than $2\pi /6$, it is hyperbolic, and for values bigger than $%
2\pi /6$, to some certain limit, it is spherical. We denote by $S_{23\alpha }
$ these geometric cone manifolds. The \textit{holonomy maps} of these
geometries are homomorphisms from the fundamental group of the triple
punctured $2$-sphere into the group of isometries of hyperbolic, Euclidean
or spherical plane, as the case may be. We reserve the term \textit{holonomy}
for the image of this homomorphism.  We give a careful description of this
holonomy map. The important fact is that we define a continuous family of
model geometric spaces, say $G_{\alpha }$ that vary with the angle $\alpha $%
. The model $G_{\alpha }$ is the Poincar\'{e} disk model of hyperbolic $2$%
-space in $\mathbb{C}\cup \{\infty \}$, with the radius of the disk varying
from $1$ to infinity. At infinity, $G_{\alpha }=\mathbb{C}$ represents the
Euclidean space, and for $\alpha $ bigger than $2\pi /6$, $G_{\alpha }=$ $%
\mathbb{C}\cup \{\infty \}$ represents the spherical geometry.

The second step in our construction is to lift these holonomy maps to
homomorphisms from the exterior of the trefoil knot into the groups of
isometries of Riemaniann geometric structures in $S^{3}$, Nil or $\widetilde{SL(2,\mathbb{R})}$  as the case may be. These geometries
form a continuous family of model geometric spaces that project onto the
family $G_{\alpha }$. The construction of this family is the content of
\cite{LM2014}, where we describe a $2$-parameter family $X(R,S)$  of Riemaniann geometries. In this paper we use the particular case  $X(S,S)$%
. The geometries $X(S,S)$, covering the spherical, Euclidean, hyperbolic $%
G_{\alpha }$, are the Thurston's geometries $S^{3}$, Nil, $SL(2,\mathbb{R})$, respectively. We
remark that each holonomy map downstairs lifts to an infinity of holonomy maps from
the exterior of the trefoil knot into the groups of isometries of the
corresponding $\ X(S,S)$ geometries.

These lifted holonomies correspond to actual geometric structures, modeled
in $X(S,S)$, of the complement of the trefoil knot. The completion of these
structures, when the completion is a $3$-manifold, are cone-manifold
structures in the result of Dehn surgery on the trefoil knot. The core of
the surgery being the singular set. These surgeries are almost always
Seifert manifolds. We introduce the following convenient notation for the
cone-manifold.
\begin{equation*}
S(m,n)=\left( Oo0|-1;(2,1),(3,1),(m,n)\right) \qquad m\geq 0
\end{equation*}%
is the Seifert manifold $\left( Oo0|-1;(2,1),(3,1),(\frac{m}{r},\frac{n}{r}%
)\right) $, where $r=\gcd (m,n)$. The exceptional fibre is $(\frac{m}{r},%
\frac{n}{r})$. This Seifert manifold is the result of performing some Dehn
surgery on the trefoil knot. The core of the surgery is the exceptional
fibre $(\frac{m}{r},\frac{n}{r})$, along which there is a cone-singularity
of angle $2\pi /r$.

The situation is exactly analogous to the one discovered by Thurston
\cite{Thurston1} for the figure-eight knot. Following him we also plot (see Figure \ref{fplot21}) the different Thurston's geometric structures with varying angle $\alpha $ for
the different Dehn surgeries of the trefoil knot.

The points in the plot bearing the symbol $\blacklozenge $ correspond to
points $(x,y)$ such that $x$ and $y$ are non-negative integers with $\gcd
(x,y)=1$. They represent the manifolds $S(x,y)$ with no singularity. The
points in the line $x/y$ connecting the origin $(0,0)$ with the point $(x,y)$
represent $S(x\times r,y\times r)$. This is a cone-manifold with underlying $%
3$-manifold the common Seifert manifold $S(x,y)$, but the cone angle $2\pi /r
$ varies from zero to infinity. Points in the right vertical dashed line
correspond to points with Nil geometry and it separates regions with $S^{3}
$ geometry and $\widetilde{{SL(2,\mathbb{R})}}$ geometry.
\begin{figure}[h]
\begin{center}
\epsfig{file=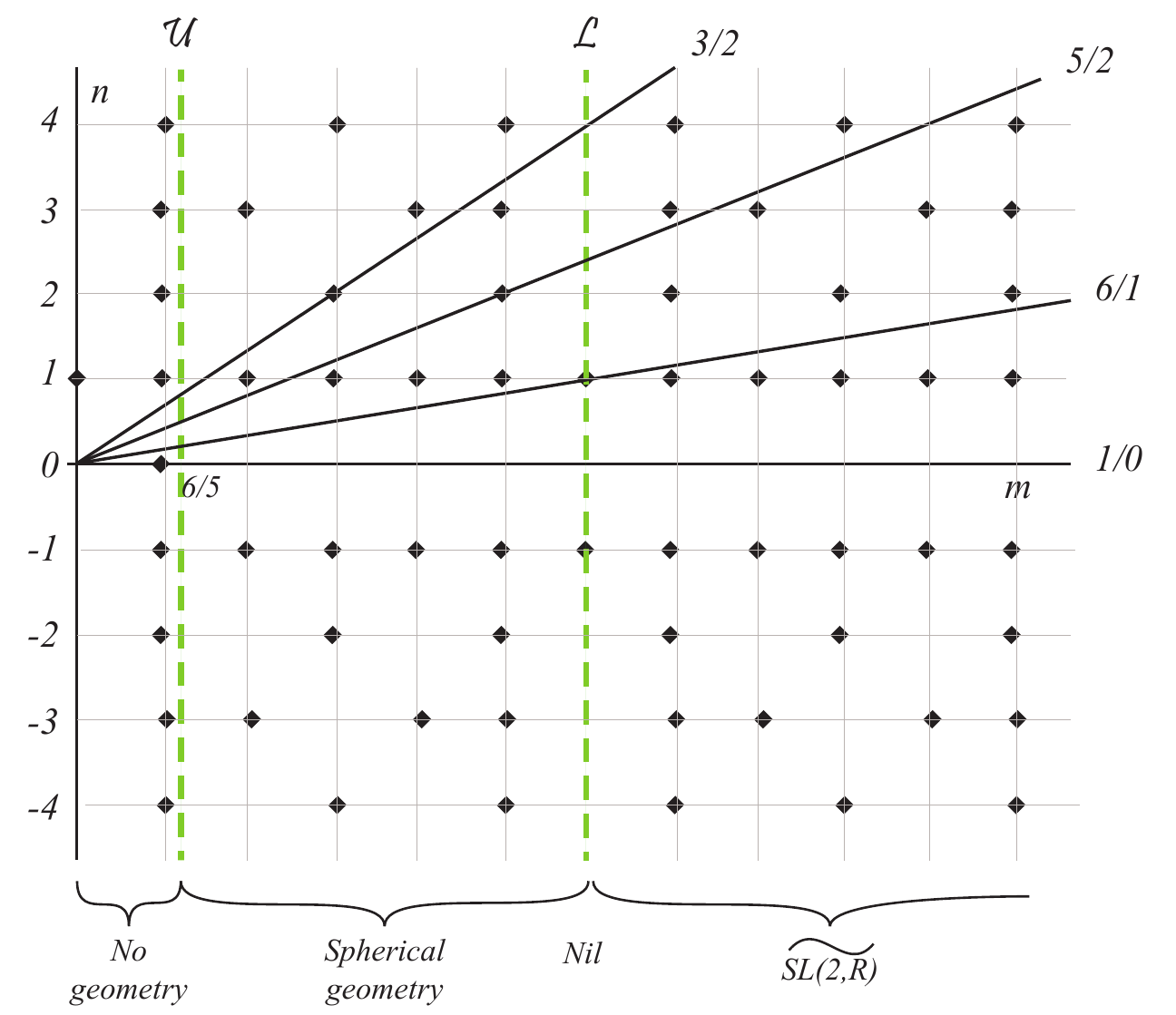,height=8cm}
\end{center}
\caption{The plot $\mathcal{P}_{2}$.}
\label{fplot21}
\end{figure}

The vertical axis $x=0$ yields $S(0,y)$, corresponding to the manifold $%
L(2,1)\sharp L(3,1)$. Between this axis (included) and the vertical line $%
m=6/5$ we find the Seifert manifolds
\begin{equation*}
S(1,n)=(Oo0|-1;(2,1),(3,1),(1,n))=(Oo0|n-1;(2,1),(3,1))
\end{equation*}
which are the lens $-L(6n-1,2n-1)$, for $n\neq 0$, and $\mathbb{S}^{3}$ for $%
n=0$. Certainly, these Seifert manifolds (lens spaces) support spherical
geometry. However, the fibre $(1,n)$ of the Seifert fibration $%
(Oo0|-1;(2,1),(3,1),(1,n))$ is not a geodesic of that geometry. This fact is
easy to understand in $S(1,0)=\mathbb{S}^{3}$, where the fibre $(1,0)$,
being a regular fibre, is the left trefoil knot, which clearly is not a
geodesic in $\mathbb{S}^{3}$. We are studying geometric structures in the
Seifert manifold $S(x,y)$ such that the fibre $(x,y)$ is geodesic (singular
or not).

For instance, the line of slope $6/1$ is the Seifert manifold
\begin{equation*}
S(m,n)=\left( Oo0|-1;(2,1),(3,1),(6,1)\right) .
\end{equation*}
The plot shows that it has Nil geometry (a well known fact) and that if
the angle in the exceptional fibre\ $(6,1)$ is less or bigger than $2\pi $
it has $SL(2,\mathbb{R})$ or $S^{3}$-geometry as the case may be. The upper
limit for the angle is $10\pi $, where the spherical geometry collapses.

We thanks Professor Porti for pointing us, after this paper was written, that \cite{K1984} contains a former approach to the geometric structures on $\mathbb{T}_{p/q}$, where the Lorentz metric (pseudo-Riemannian) on $\widetilde{SL(2,\mathbb{R})}$ is considered in order to obtain results on the corresponding volume.

\section{Conemanifold}

In this section the concept of topological and geometric 2-conemanifold will
be defined. There exist analogous concepts in other dimensions, in
particular we shall use also topological and geometric 3-conemanifold
without any new detailed definition, since they are just straightforward
generalization of the 2 dimensional case.

\subsection{Topological 2-conemanifolds}

\begin{definition}
A \emph{2-conemanifold} is a set $(\Sigma ,P,v)$, where $\Sigma $ is a
closed (compact and without boundary) surface, $P$ is a finite number of
\emph{singular} points $P=\{x_{1},...x_{k}\}\subset \Sigma $ and $v$ is a
\emph{valuation}
\begin{equation*}
v:\Sigma \longrightarrow \mathbb{R}\cup \{\infty \}
\end{equation*}%
such that $v(x)=1$ for all points but for the singular points $x_{i}\in P$
where $v(x_{i})\neq 1,\,i=1,...k$. The singular points $x_{i}\in P\subset \Sigma
$ such that $v(x_{i})<\infty $ are called \emph{conic points}. The
singular points $x_{j}\in P\subset \Sigma $ such that $v(x_{j})=\infty $ are
called \emph{cusps}.
\end{definition}

A 2-conemanifold is determined, up to homeomorphism, by $\Sigma$ and the
list $\{ v(x_{i}) | x_{i}\in P\}$.

For example, $S_{2,3,r}^{2}$, $r>1$, will denote a 2-sphere $S^{2}$ with
three singular points with valuation $2,3,r$.

The \textit{Euler characteristic} $\chi ^{c}(\Sigma ,P,v)$, \textit{of a
2-conemanifold} $(\Sigma ,P,v)$ is a real number defined by
\begin{equation*}
\chi ^{c}(\Sigma ,P,v)=\chi (\Sigma )+\underset{x\in P}{\sum }(\frac{1}{v(x)}%
-1)
\end{equation*}%
where $\chi (\Sigma )$ is the Euler characteristic of the surface $\Sigma $.
The Euler characteristic $\chi ^{c}(\Sigma ,v)$ is a topological invariant
of $(\Sigma ,P,v)$.

For instance, for an orientable surface of genus $g$ with $k$ singular
points
\begin{equation*}
\chi ^{c}(O,g|r_{1},...,r_{k})= 2-2g-k+\underset{i=1}{\overset{k}{\sum }}
\frac{1}{r_{i}}
\end{equation*}
where $r_{i}=v(x_{i})$.

\subsection{ Geometric 2-conemanifolds}

Given a 2-conemanifold $(\Sigma ,P,v)$ it is often posible to define a
geometric structure in $\Sigma \setminus P$ compatible with the valuation at
the singular points, as follows.

Let $X$ be $S^{2}$, $E^{2}$ or $H^{2}$. Let $\widehat{X}$ denote $S^{2}$
(the geometric 2-sphere), $\widehat{E^{2}}$ (the one-point compactification
of Euclidian plane) or $\widehat{H^{2}}$ (the hyperbolic plane together with
its points at infinity).

\begin{definition}
The 2-conemanifold $(\Sigma ,P,v)$ has a $X$ geometry if there exists a
finite triangulation $(K,h)$ of the closed surface $\Sigma $, where $K$ is a
2-dimensional complex and $h:K\longrightarrow \Sigma $ is a homeomorphism,
such that

\begin{enumerate}
\item $h^{-1}(x_{i})$ is a vertex of $K$, for all $x_{i}\in P$.

\item For every triangle $\sigma \in K$ there exists a homeomorphism $%
h_{\sigma }$ from a geodesic triangle $t_{\sigma }$ in $\widehat{X}$:
\begin{equation*}
h_{\sigma }:t_{\sigma }\longrightarrow \sigma
\end{equation*}%
such that

\begin{enumerate}
\item if $v(x_{i})=\infty$ and $h^{-1}(x_{i})$ is a vertex of $\sigma$, then
$(h\circ h_{\sigma })^{-1}(x_{i})\in \widehat{X}\setminus X$.

\item if $v(x_{i})\in \mathbb{R}$ and $h^{-1}(x_{i})$ is vertex of exactly $%
m $ triangles $(\sigma _{1},...,\sigma _{m})$, then the sum of the angles of
$(t_{\sigma _{j}})$ at the vertex $h_{\sigma _{j}}^{-1}(h^{-1}(x_{i}))$ for $%
j=1,...,m$, is equal to $2\pi /v(x_{i})$.
\end{enumerate}

\item If two triangles $\sigma$ and $\tau$ have a common edge, $\sigma \cap
\tau =l$, the map
\begin{equation*}
h_{\tau}^{-1} \circ h_{\sigma}|_{h_{\sigma}^{-1}(l)}:
h_{\sigma}^{-1}(l)\longrightarrow h_{\tau}^{-1}(l)
\end{equation*}
is onto and it is the restriction of an isometry of $\widehat{X}$.
\end{enumerate}

These conditions define a $X$-geometric structure in $K$, such that the
homeomorphisms $h_{\sigma }$ are isometries for all $\sigma \in K$.

Then, the homeomorphism $h$ allows us to define a geometric structure in $%
\Sigma $ making $h$ an isometry. We say that the 2-conemanifold $(\Sigma
,P,v)$ has a $X$-geometry.
\end{definition}

A topological 2-conemanifold can have non isometric geometric structures.
Consider the topological cone manifold $%
S_{(4/3,4/3,4/3,4/3,4/3,4/3,4/3,4/3)}^{2}$ which is the 2-sphere with 8
conic point with valuation 4/3. Figure \ref{fcubo} shows two different
simplicial complex that are triangulations of it. The first one is isometric
to the faces of a Euclidean cube, and the second one is isometric to the
union of two regular Euclidean octogonal polygons. Observe that these two
geometric cone manifolds are not isometric by comparing the distances
between singular points.

\begin{figure}[h]
\begin{center}
\epsfig{file=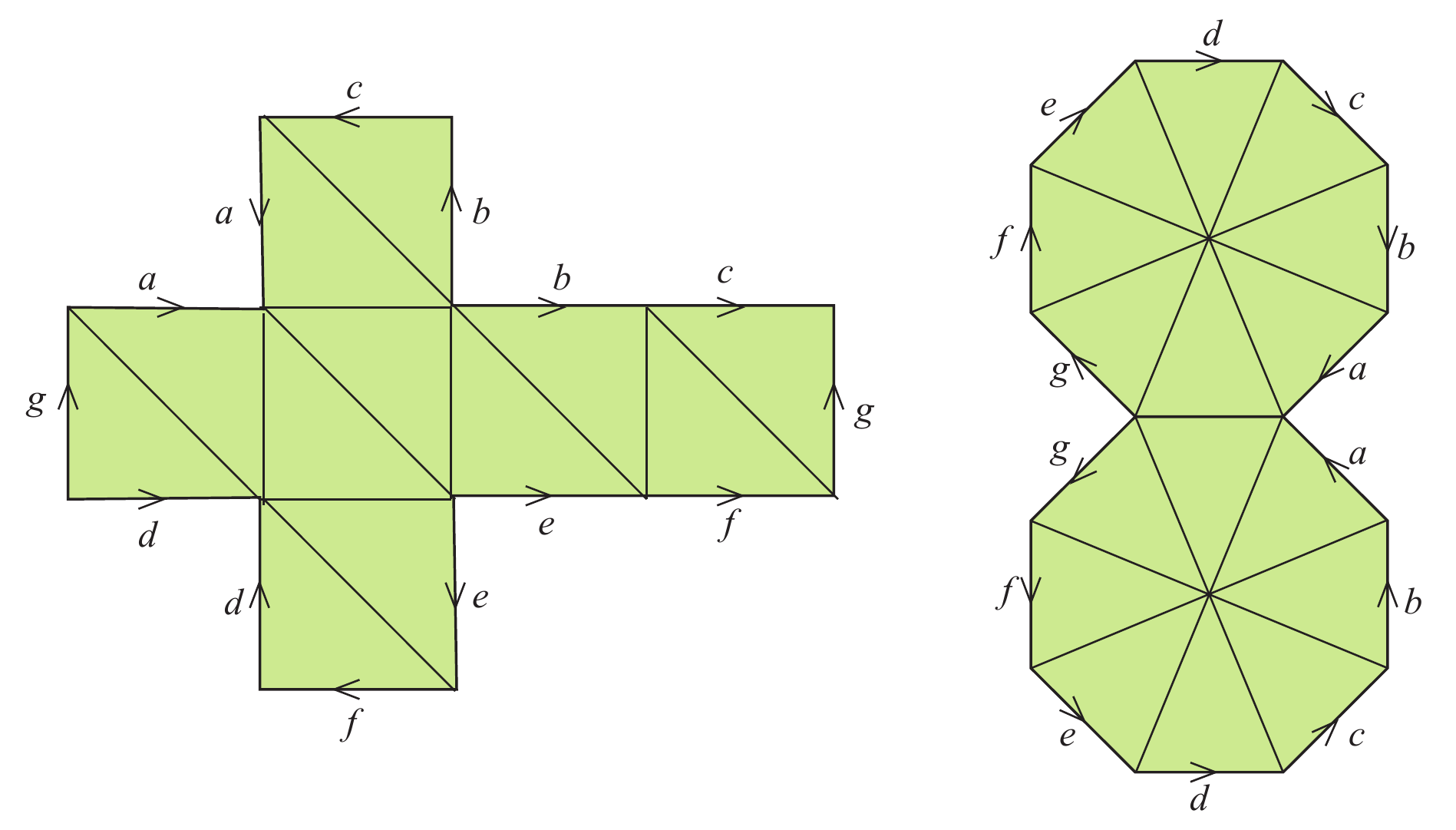,height=6cm}
\end{center}
\caption{Two geometric $S^{2}_{(4/3,4/3,4/3,4/3,4/3,4/3,4/3,4/3)}$.}
\label{fcubo}
\end{figure}

\subsection{ Values of the Euler characteristic}

The next result proves that if a 2-conemanifold has some geometric
structures, all of them are modeled on the same $X$ geometry.

Let $(\Sigma ,P,v)$ be a 2-conemanifold having a $X$-geometry. It follows
from the Gauss-Bonet that
\begin{equation*}
\int_{\Sigma }kd\sigma +\underset{i=1}{\overset{k}{\sum }}(2\pi -\frac{2\pi
}{r_{i}})=2\pi \chi (\Sigma )
\end{equation*}%
where $k$, Gauss curvature, is $-1,0,1$ if $X$ is equal to $H^{2}$, $E^{2}$
or $S^{2}$, respectively. This is equivalent to
\begin{equation*}
\int_{\Sigma }kd\sigma =2\pi (\chi (\Sigma )+\underset{i=1}{\overset{k}{\sum
}}(\frac{1}{r_{i}}-1))=2\pi \chi ^{c}(\Sigma )
\end{equation*}

\begin{proposition}
\label{xgeometry} Let $(\Sigma ,P,v)$ be a 2-conemanifold having a $X$%
-geometry. Then $\chi ^{c}(\Sigma )$ is $<0$, $=0$ or $>0$ when $X$ is equal
to $H^{2}$, $E^{2}$ or $S^{2}$ respectively. \qed
\end{proposition}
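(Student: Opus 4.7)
The proposition is a direct consequence of the Gauss--Bonnet identity for conemanifolds displayed immediately above the statement:
$$\int_\Sigma k\, d\sigma = 2\pi\, \chi^{c}(\Sigma).$$
So my plan has two stages: first verify that identity using the geometric triangulation supplied by the $X$-structure, and then simply compare signs.

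For the verification, I would take the triangulation $(K,h)$ from Definition 2.2 and apply the classical Gauss--Bonnet formula for geodesic triangles in $\widehat X$, namely that a geodesic triangle $t_\sigma$ of constant curvature $k$ satisfies (sum of interior angles) $=\pi + k\cdot\mathrm{Area}(t_\sigma)$. Summing over the $F$ triangles of $K$ produces $\pi F + k\cdot \mathrm{Area}(\Sigma)$ on the right. On the left, I would regroup the angles vertex by vertex: each non-singular vertex contributes $2\pi$ (condition (3) of the definition says the edge identifications are isometries of $\widehat X$, so the link at a smooth vertex is a full round angle), each conic vertex $x_i$ contributes $2\pi/v(x_i)$ by condition (2)(b), and each cusp contributes $0$ because ideal vertices have zero angle by condition (2)(a). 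Combining with $3F=2E$ and $\chi(\Sigma)=V-E+F$ (which together give $\pi F = 2\pi V - 2\pi\chi(\Sigma)$) and rearranging yields exactly the Gauss--Bonnet identity above.

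Once that identity is in hand, the conclusion is immediate. The area $\mathrm{Area}(\Sigma) = \int_\Sigma d\sigma$ is a sum of finitely many positive triangle areas, hence strictly positive, and $k$ is the constant $-1,0,+1$ according to $X = H^2, E^2, S^2$. Therefore $\chi^{c}(\Sigma)$ shares the sign of $k$, giving the stated trichotomy.

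The only delicate point I anticipate is the treatment of cusps: when $v(x_i)=\infty$, some triangles of the triangulation are ideal with vertices in $\widehat X\setminus X$. This is harmless in the hyperbolic case because ideal triangles still have finite area and the angle-area formula continues to hold with angle $0$ at the ideal vertex, so the accounting above is unaffected. In the Euclidean and spherical cases a compact conemanifold with the kind of finite geodesic triangulation required by Definition 2.2 cannot sustain cusps, so no extra case arises. This is the main place where I would be careful, but it does not alter the sign analysis.
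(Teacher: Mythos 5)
Your argument is correct and follows essentially the same route as the paper: the paper simply invokes the Gauss--Bonnet formula with cone-point corrections to obtain $\int_{\Sigma }k\,d\sigma =2\pi \chi ^{c}(\Sigma )$ and then compares signs, exactly as you do. Your additional derivation of that identity from the triangulation of Definition~2.2 (angle--area formula per triangle, regrouping angles by vertex, $3F=2E$) and your remark on cusps are sound elaborations of what the paper leaves implicit, not a different method.
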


\subsection{ The 2-conemanifolds $(O,0|2,3,r)$}

Consider the 2-conemanifold $(\Sigma ,P,v)=(O,0|2,3,r)$, the 2-sphere with
three singular points and valuation $v(x_{1})=2$, $v(x_{2})=3$, and $%
v(x_{3})=r$. This notation is suggested by the Seifert notation in \cite%
{S1933}.

\begin{equation}  \label{s23r}
\chi^{c}((O,0|2,3,r))=2-3+\frac{1}{2}+\frac{1}{3}+\frac{1}{r}=-\frac{1}{6}+%
\frac{1}{r}
\end{equation}

\begin{proposition}
The 2-conemanifold $(O,0|2,3,r)$ has Euclidean geometry for $r=6$; spherical
geometry for $\frac{6}{5}<r<6$; and hyperbolic geometry for $r>6$.
\end{proposition}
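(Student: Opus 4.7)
The plan is to combine the Euler-characteristic computation (\ref{s23r}) with Proposition \ref{xgeometry}, and then realize each geometry explicitly by doubling a geodesic triangle.

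From (\ref{s23r}) one reads off $\chi^{c}((O,0|2,3,r))=-\tfrac{1}{6}+\tfrac{1}{r}$, which is positive, zero, or negative according as $r<6$, $r=6$, or $r>6$. Proposition \ref{xgeometry} therefore forces any geometric structure on $(O,0|2,3,r)$, if one exists, to be spherical, Euclidean, or hyperbolic in these respective ranges. What remains is to exhibit such a structure in each case.

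For existence I would use the standard doubling construction: a geodesic triangle $T\subset\widehat{X}$ with vertex angles $\pi/2,\pi/3,\pi/r$, glued to a mirror copy of itself along its boundary, produces a $2$-sphere with three cone points whose cone angles are $2\pi/2$, $2\pi/3$, $2\pi/r$, that is, precisely $(O,0|2,3,r)$. Thus it suffices to find such a triangle in $E^{2}$ for $r=6$ (the classical $30$-$60$-$90$ triangle, with angles summing to $\pi$), in $H^{2}$ for $r>6$ (any triple of angles in $(0,\pi)$ of sum less than $\pi$ is realized by a hyperbolic triangle, uniquely up to isometry), and in $S^{2}$ for $6/5<r<6$.

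The main obstacle is the lower bound $r>6/5$ in the spherical case, since it is not captured by the Gauss--Bonnet trichotomy. I would verify it via the spherical law of cosines: the side $a$ opposite the right angle $A=\pi/2$ satisfies
\[
\cos a \;=\; \frac{\cos(\pi/r)}{\sqrt{3}\,\sin(\pi/r)},
\]
and the identity $\cos\theta+\sqrt{3}\sin\theta=2\sin(\theta+\pi/6)$ shows that $\cos a>-1$ is equivalent to $\pi/r<5\pi/6$, i.e.\ $r>6/5$, while $\cos a<1$ is equivalent to $r<6$. The two remaining sides $b,c$ (opposite $\pi/3$ and $\pi/r$) give rise to the same bounds and no stricter ones. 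At the endpoint $r=6/5$ the triangle degenerates (one edge becomes a great semicircle), so no spherical realization survives below $6/5$. This pins down the stated range and completes the classification.
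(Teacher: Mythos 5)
Your argument is correct and follows essentially the same route as the paper: the Gauss--Bonnet trichotomy via $\chi^{c}=-\tfrac{1}{6}+\tfrac{1}{r}$ to identify the candidate geometry, followed by an explicit realization from a geodesic triangle, with the bound $r>\tfrac{6}{5}$ coming out of spherical trigonometry (your constraint $|\cos b|<1$ with $\cos b=\tfrac{1}{2\sin(\pi/r)}$ is literally the paper's condition $\cos\mu=\tfrac{1}{2\sin\alpha}\leq 1$, i.e.\ $\alpha<\tfrac{5\pi}{6}$). The only cosmetic differences are that you double the $(\pi/2,\pi/3,\pi/r)$ right triangle while the paper folds the isosceles $(\alpha,\alpha,2\pi/3)$ triangle (its double), and the paper additionally derives the parametrization $S=\tfrac{1-2\sin\alpha}{1+2\sin\alpha}$ of the model, which it needs for the later three-dimensional constructions rather than for this proposition.
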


\begin{proof}
By (\ref{s23r})
\begin{equation}
\chi ^{c}((O,0|2,3,r))\left\{
\begin{array}{c}
< \\
= \\
>%
\end{array}%
\right\} 0\Leftrightarrow r\left\{
\begin{array}{c}
> \\
= \\
<%
\end{array}%
\right\} 6
\end{equation}%
Therefore, Proposition \ref{xgeometry} indicates which type of geometry we
should look for.

The hyperbolic metric in the Poincar\'{e} disc model, the open unit disc
\begin{equation*}
D_{1}=\left\{ z=x+iy\;|\;x^{2}+y^{2}<1\right\}
\end{equation*}%
is given by
\begin{equation*}
ds^{2}=\frac{4dzd\overline{z}}{(1-z\overline{z})^{2}}.
\end{equation*}

In order to apply degeneration on geometric structures is more convenient to
work with a disc in $\mathbb{C}$ with radius $\frac{1}{\sqrt{S}}$. Then the
dilatation
\begin{equation*}
\begin{array}{cccc}
\lambda : & D_{\frac{1}{\sqrt{S}}} & \longrightarrow & D_{1} \\
& z & \to & \sqrt{S}z%
\end{array}%
\end{equation*}
is an isometry if and only if the metric on $D_{\frac{1}{\sqrt{S}}}$ is
given by
\begin{equation*}
ds^2=\frac{4Sdzd\overline{z}}{(1-Sz\overline{z})^{2}},\quad 1\geq S>0
\end{equation*}

\begin{figure}[h]
\begin{center}
\epsfig{file=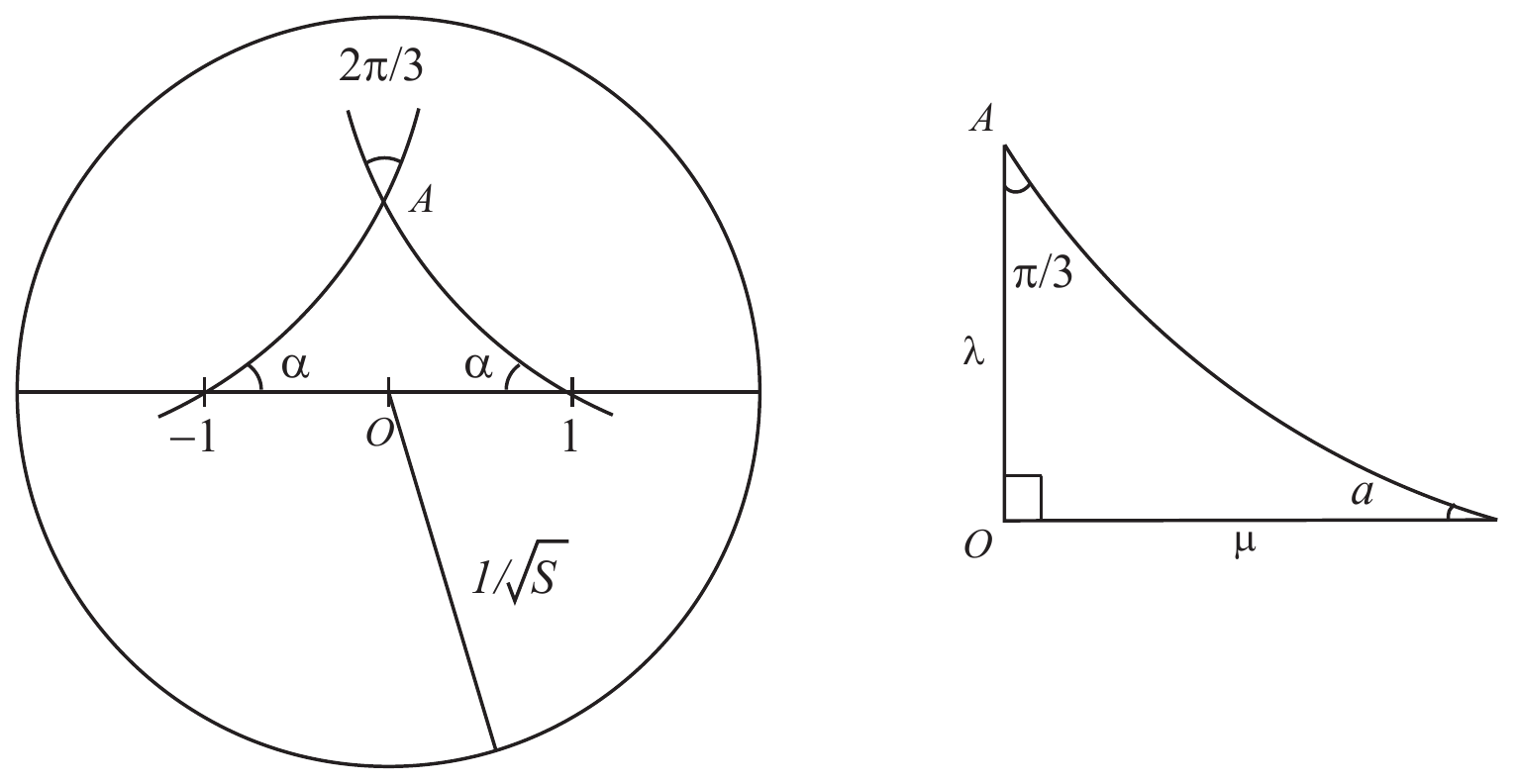,height=5cm}
\end{center}
\caption{The hyperbolic plane and a hyperbolic triangle.}
\label{fs23rhyper}
\end{figure}

Figure \ref{fs23rhyper} shows a hyperbolic triangle $T$ . The angles at the
two vertices placed at the points $(-1,0)$ and $(1,0)$ are both $\alpha $.
The remaining vertex has an angle of $2\pi /3$. Lets us relate the angle $%
\alpha $ with $S$ using well known formulas for hyperbolic triangles.
\begin{equation*}
\cosh \mu =\frac{\cos \frac{\pi }{3}}{\sin \alpha }=\frac{1}{2\sin \alpha }%
;\qquad \cosh \lambda =\frac{\cos \alpha }{\sin \frac{\pi }{3}}=\frac{2\cos
\alpha }{\sqrt{3}}
\end{equation*}

\begin{eqnarray*}
\mu &=& \int_{0}^{1}\sqrt{\frac{4S}{(1-St^{2})^{2}}}dt=\int_{0}^{1}\frac{2%
\sqrt{S}}{(1-St^{2})}dt= \\
&=& \lg (1+\sqrt{S}t)-\lg (1-\sqrt{S}t)|_{0}^{1}= \lg \frac{1+\sqrt{S}}{1-%
\sqrt{S}}
\end{eqnarray*}

\begin{equation*}
\Rightarrow \quad e^{\mu }=\frac{1+\sqrt{S}}{1-\sqrt{S}}\quad
\Rightarrow\quad \cosh \mu =\frac{e^{2\mu} +1}{2e^{\mu }}=\frac{1+S}{1-S}
\end{equation*}
Therefore
\begin{equation}  \label{esalfa}
\frac{1}{2 \sin \alpha}=\frac{1+S}{1-S}\quad \Rightarrow \boxed{S=\frac{1-2
\sin \alpha}{1+2 \sin \alpha}}
\end{equation}

Analogously, $\mathbb{C}P^{1}$ with the spherical Riemannian metric,
\begin{equation*}
ds^2=\frac{-4Sdzd\overline{z}}{(1-Sz\overline{z})^{2}},\quad S<0
\end{equation*}
is the stereographic projection of the sphere $S^{2}$ with radius $\frac{1}{%
\sqrt{-S}}$ endowed with a Riemannian metric isometric to the usual
spherical metric on the unit sphere in $\mathbb{R}^{3}$. The circle of
radius $\frac{1}{\sqrt{-S}}$ is the equator. Figure \ref{fs23rspher} shows
the spherical triangle $T$ analogous to the hyperbolic case.
\begin{figure}[h]
\begin{center}
\epsfig{file=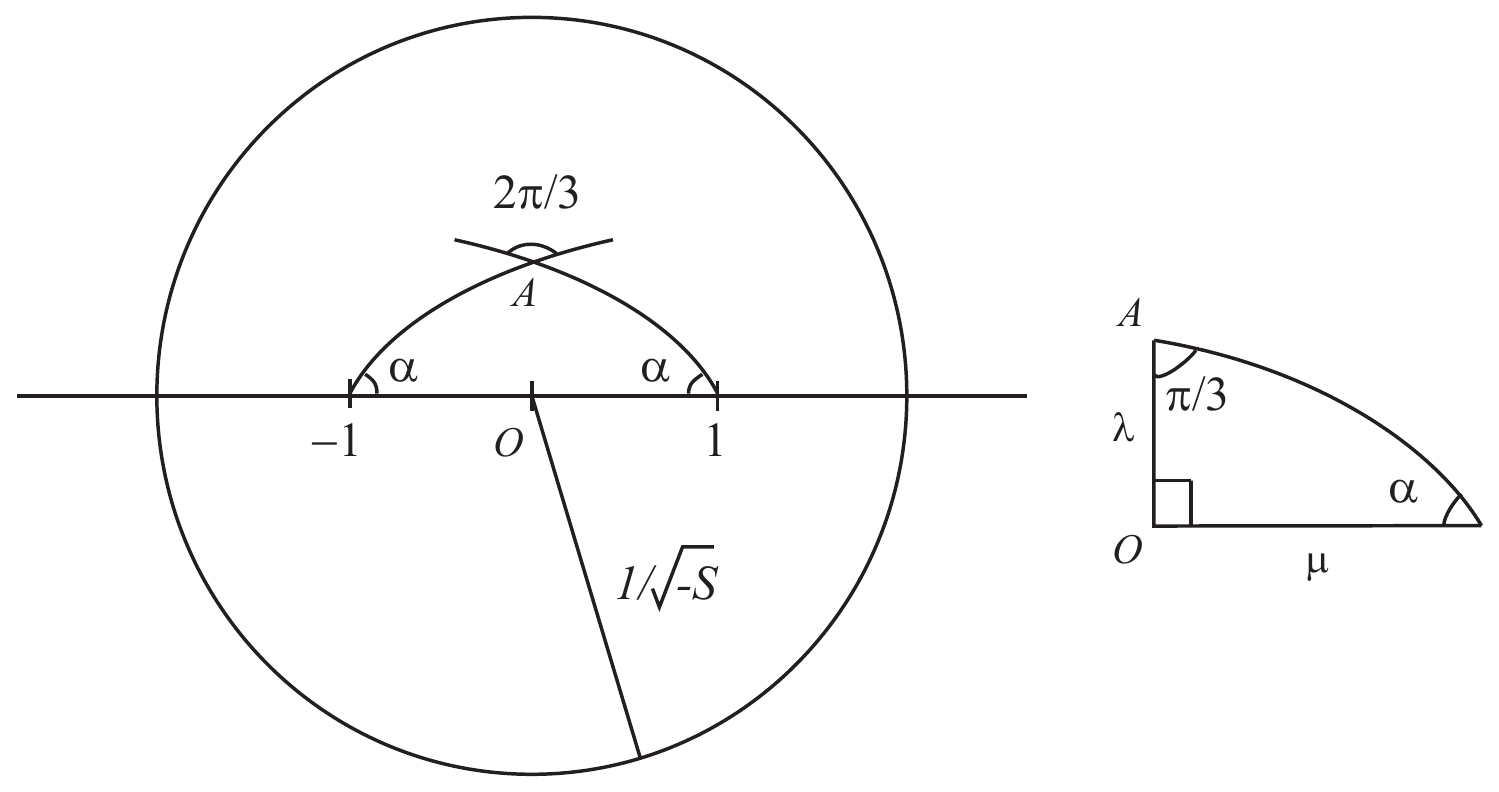,height=5cm}
\end{center}
\caption{The spherical case.}
\label{fs23rspher}
\end{figure}

In this spherical case
\begin{equation*}
\cos \mu =\frac{\cos \frac{\pi}{3}}{\sin \alpha}=\frac{1}{2\sin \alpha}
\end{equation*}

\begin{eqnarray*}
\mu = \int_{0}^{1}\sqrt{\frac{-4S}{(1-St^{2})^{2}}}dt=\int_{0}^{1}\frac{i 2%
\sqrt{S}}{(1-St^{2})}dt \\
\Rightarrow -i \mu= \lg (1+\sqrt{S}t)-\lg (1-\sqrt{S}t)|_{0}^{1}= \lg \frac{%
1+\sqrt{S}}{1-\sqrt{S}}
\end{eqnarray*}

\begin{equation*}
\Rightarrow \quad e^{-i\mu }=\frac{1+\sqrt{S}}{1-\sqrt{S}}\quad
\Rightarrow\quad \cos \mu =\frac{e^{i\mu} +e^{-i\mu}}{2}=\frac{1+S}{1-S}
\end{equation*}
Therefore as before
\begin{equation}  \label{eesealfa}
\frac{1}{2 \sin \alpha}=\frac{1+S}{1-S}\quad \Rightarrow \boxed{S=\frac{1-2
\sin \alpha}{1+2 \sin \alpha}}
\end{equation}
\begin{figure}[h]
\begin{center}
\epsfig{file=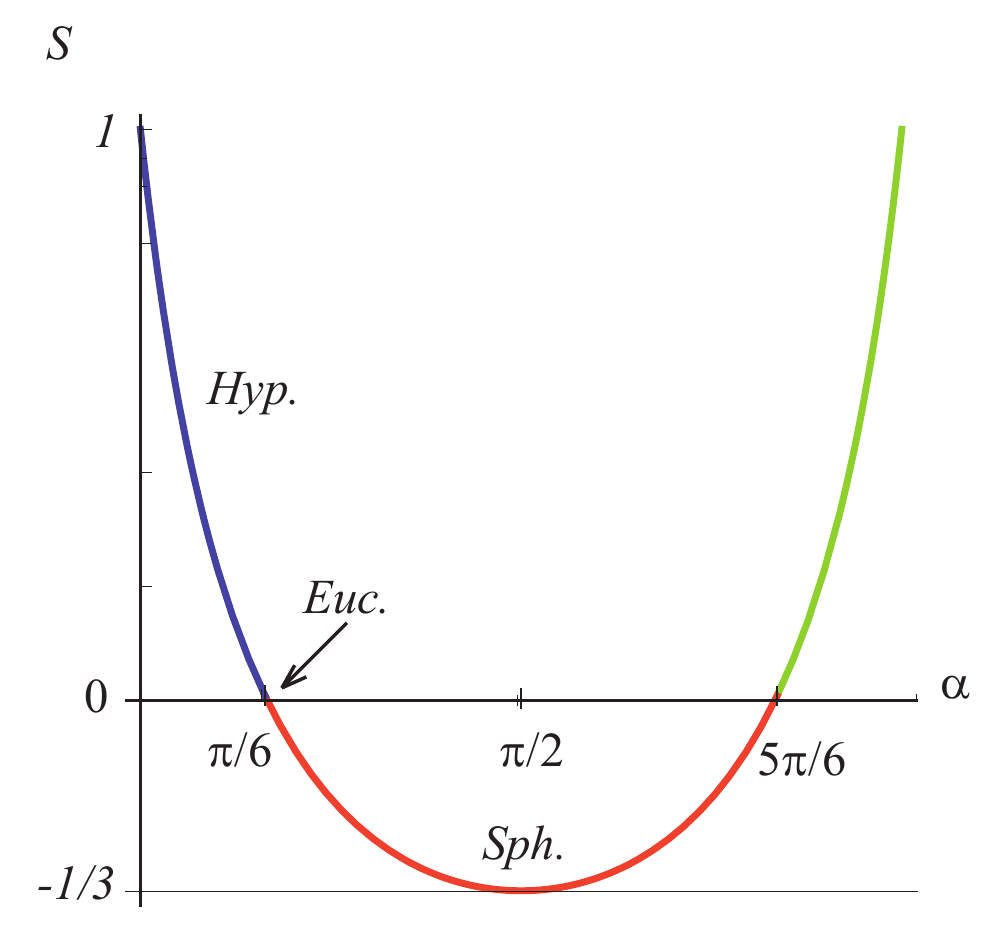,height=5cm}
\end{center}
\caption{The graphic $S(\protect\alpha )$.}
\label{fSalfa}
\end{figure}

Figure \ref{fSalfa} shows the graph of $S$ as a function of $\alpha $ and
the values where the triangle $T$ exists in hyperbolic plane (Hyp.),
Euclidian plane (Euc.) and sphere (Sph.). Therefore
\begin{equation*}
\alpha =0\quad \Leftrightarrow \quad S=1;\qquad \alpha =\frac{\pi }{6}\quad
\Leftrightarrow \quad S=0.
\end{equation*}%
\begin{figure}[h]
\begin{center}
\epsfig{file=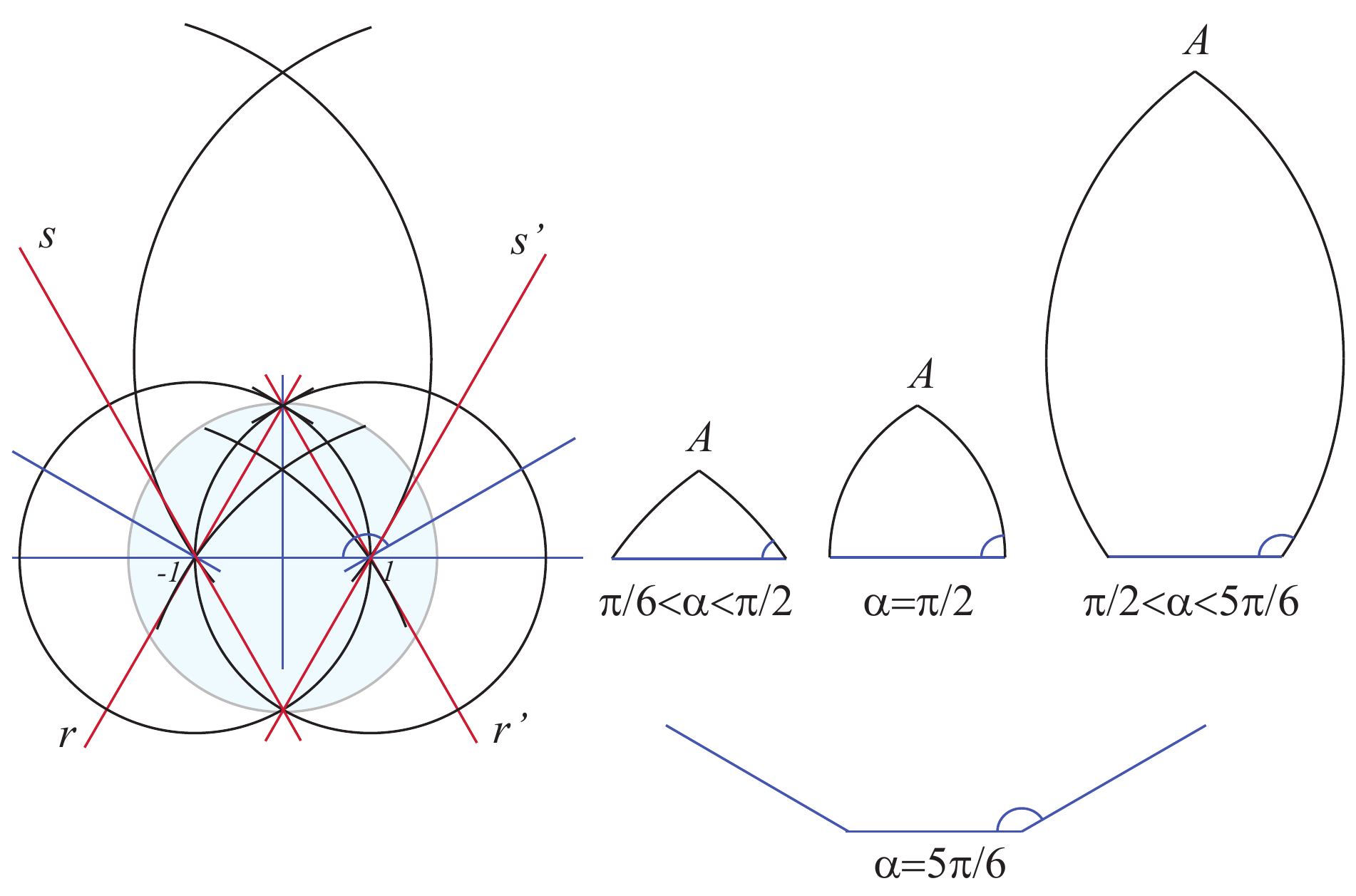,height=7cm}
\end{center}
\caption{Some spherical triangles and the limit case.}
\label{ftriangulos}
\end{figure}

The case $S=0$ correspond to the disc of infinite radius: the Euclidian
case. The hyperbolic triangle $T$ exists for $0<\alpha <\frac{\pi}{6}$; for $%
\alpha =\frac{\pi}{6}$ is an Euclidean triangle and for $\frac{\pi}{6}%
<\alpha <\frac{5\pi}{6}$ is a spherical triangle.

Figure \ref{ftriangulos} shows some cases in the stereographic projection of
the sphere $S^{2}$ of radius $\frac{1}{\sqrt{-S}}$ in the plane, where the
shadowed disc corresponds to one of the hemispheres of $S^{2}$ limited by
the equator (in the figure $S=-\frac{1}{3}$). The edge between the two
angles $\alpha $ is common to all the triangles, and it is the segment lying
between points $(-1,0)$ and $(1,0)$. The other two edges are part of circles
centered at point in the lines $r$ and $r^{\prime }$, for $\frac{\pi }{6}%
<\alpha \leq \frac{\pi }{2}$, and in the lines $s$ and $s^{\prime }$ for $%
\frac{\pi }{2}\leq \alpha \leq \frac{5\pi }{6}$. The radius of this circles
is a function of $S$. In the same figure are depicted three triangles, and
also the limit case $S=0,\,\alpha =\frac{5\pi }{6}$ where the geometry
becomes Euclidean.

The geometric 2-conemanifold $(O,0|2,3,r)$, where $r=\frac{\pi }{\alpha }$,
is obtained as the quotient of $T$ by identifying the two edges, meeting in $%
A$, by a $(2\pi /3)$-rotation isometry centered at $A$ and identifying the
two halfs of the other edge by a $\pi $-rotation isometry centered at $O$.
\end{proof}

\subsection{ The holonomy}

Let $(\Sigma ,P,v)$ be a 2-conemanifold having an $X$-geometry. The \textit{%
holonomy map of }$(\Sigma ,P,v)$  is a homomorphism
\begin{equation*}
\omega :\pi _{1}(\Sigma \setminus P)\longrightarrow I\!so(X)
\end{equation*}%
defined as follows.

Let $\gamma $ be a meridian of $x\in P$. Develop $\Sigma $ over $X$ along $%
\gamma $; the isometry relating the two ends of this developing is, by
definition, $\omega (\gamma )$.

The image of $\omega $ is called the \textit{holonomy} of $(\Sigma ,P,v)$.

Thurston (\cite{Thurston}, \cite{MatsuMonte1991}) proved that if the valuations
are all natural numbers, the conemanifold is an orbifold obtained as the
quotient of $X$ by the  holonomy.

\begin{proposition}
\label{paybend} Consider the 2-conemanifold $(O,0|2,3,r)$ with its
corresponding $X$-geometry. Let $\rho $ denote the rotation of angle $\frac{%
2\pi }{r}$around the point $0$. Let $\tau _{\pm }$ denote the translation
sending $0$ to $\pm 1$ in the model of radius $\frac{1}{\sqrt{|S|}}$. This
translation will be hyperbolic, Euclidean or spherical, according to $X$.
Then the image of the holonomy of the $X$-geometry of $(O,0|2,3,r)$ is the
subgroup of $Iso^{+}X$ generated by the rotations $a$ and $b$, conjugate to $%
\rho $ by $\tau _{\pm }$.
\end{proposition}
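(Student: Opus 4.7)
The plan is to realize the holonomy concretely on the fundamental polygon of the preceding proposition, and then to match it with $\langle a,b\rangle$ by a short group-theoretic calculation.

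I would take the triangle $T$ (with vertices $A$, $+1$, $-1$) as a fundamental polygon for the $X$-geometric 2-conemanifold $(O,0|2,3,r)$; the construction just carried out exhibits its side-pairings as the rotation $R_A$ of angle $\tfrac{2\pi}{3}$ about $A$ (identifying the two slanted edges) and the rotation $R_O$ of angle $\pi$ about $0$ (identifying the two halves of the bottom edge). By the standard developing-map argument, the holonomy image is then $\langle R_A,R_O\rangle\le\mathrm{Iso}^{+}(X)$, and it remains to prove
\[
\langle a,b\rangle \;=\; \langle R_A,R_O\rangle .
\]

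For the inclusion $\langle a,b\rangle\subseteq\langle R_A,R_O\rangle$, I would subdivide $T$ along the segment $AO$ into two congruent right triangles $T'=PAO$ and $T''=AQO$ (with $P=+1$, $Q=-1$), each with interior angles $\tfrac{\pi}{r},\tfrac{\pi}{3},\tfrac{\pi}{2}$. The classical identity that, for a geodesic triangle in $X$, the product of the rotations by twice the interior angles at the three vertices (taken in the cyclic order of the counterclockwise orientation) equals the identity, applied to $T'$ and $T''$, yields the two relations
\[
a\,R_A\,R_O \;=\; 1 \qquad\text{and}\qquad b\,R_O\,R_A \;=\; 1 ,
\]
so that $a=R_O^{-1}R_A^{-1}$ and $b=R_A^{-1}R_O^{-1}$; both clearly lie in $\langle R_A,R_O\rangle$.

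For the reverse inclusion, a short computation using $R_O^{2}=1$ and $R_A^{3}=1$ (immediate from the cone angles at $O$ and $A$) gives
\[
ba \;=\; (R_A^{-1}R_O^{-1})(R_O^{-1}R_A^{-1}) \;=\; R_A^{-2} \;=\; R_A ,
\]
and then from $a=R_O^{-1}R_A^{-1}$,
\[
R_O \;=\; (a\,R_A)^{-1} \;=\; R_A^{-1}a^{-1} \;=\; (ba)^{-1}a^{-1} \;=\; a^{-1}b^{-1}a^{-1},
\]
so $R_A,R_O\in\langle a,b\rangle$, finishing the proof. The delicate point, where I would double-check carefully, is the orientation convention that turns the two triangle-group identities into exactly the form $a R_A R_O=1$ and $b R_O R_A=1$ for the specific $a,b,R_A,R_O$ of the statement; a direct check in the Euclidean model ($r=6$), where all four isometries have explicit formulas as affine maps of $\mathbb{C}$, pins down the signs, and the same relations then persist in the hyperbolic and spherical models since $a,b,R_A,R_O$ depend continuously on the parameter $S$ of the family.
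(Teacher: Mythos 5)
Your proof is correct, and it reaches the paper's conclusion by a genuinely different route. The paper never touches the fundamental polygon: it works entirely in $\pi _{1}(\Sigma \setminus P)=|c,d:-|$ with $c,d$ meridians of the order-$3$ and order-$2$ cone points, notes that the holonomy factors through $|c,d:c^{3}=d^{2}=1|$, performs the change of generators $a=dc^{-1}$, $b=c^{2}d^{-1}$ (so $c=ba$, $d=aba$), and identifies $\omega (a)$, $\omega (b)$ with the rotations of angle $2\pi /r$ about $\pm 1$ by observing that $a$ and $b$ are conjugate meridians of the $r$-point, interchanged by conjugation with the $\pi$-rotation $\omega (d)$. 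You instead start from the side-pairings $R_{A}$, $R_{O}$ of the fundamental triangle and extract the relations $a=R_{O}^{-1}R_{A}^{-1}$, $b=R_{A}^{-1}R_{O}^{-1}$ from the classical identity for the product of rotations by twice the interior angles of a geodesic triangle, applied to the two halves of $T$. The two arguments are algebraically the same in the end --- your $R_{A}=ba$ and $R_{O}=aba$ are exactly the paper's $\omega (c)$ and $\omega (d)$ --- but your derivation is geometric where the paper's is a combinatorial change of basis, and it has the merit of proving directly that the \emph{specific} rotations $a,b$ named in the statement generate the holonomy, rather than defining $a,b$ as words and then identifying their images. The price is the orientation bookkeeping in the triangle identity, which you rightly flag; your plan to pin the convention down in the Euclidean model is sound, and is even cleaner if you note that the identity $\rho _{1}\rho _{2}\rho _{3}=1$ (vertices in counterclockwise cyclic order, rotations counterclockwise by twice the angles) holds uniformly in $S^{2}$, $E^{2}$ and $H^{2}$, so a single check of the convention suffices without any continuity argument.
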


\begin{proof}
The fundamental group $\pi _{1}(\Sigma \setminus P)$ is the fundamental
group of a sphere with three punctures.
\begin{equation*}
\pi _{1}(\Sigma \setminus P)=|c,d:-|
\end{equation*}%
where $c,d$ are meridians of the conic points with valuation $3$ and $2$
respectively.

The holonomy map is, by definition, a homomorphism
\begin{equation}
\begin{array}{cccc}
\omega : & \pi _{1}(\Sigma \setminus P) & \longrightarrow & I\!so(X) \\
& c & \rightarrow & \omega (c) \\
& d & \rightarrow & \omega (d)%
\end{array}
\label{eholonomia}
\end{equation}%
where $(\omega (c))^{3}=I\!dentity$ and $(\omega (d))^{2}=I\!dentity$.
Therefore the holonomy map $\omega $ factors through the group
\begin{equation*}
|c,d:c^{3}=d^{2}=1|
\end{equation*}%
We change the above presentation using $a=dc^{-1}$ and $b=ccd^{-1}$. Then $%
d=ac$ and $c=ba,\quad \Rightarrow d=aba$.
\begin{equation*}
|c,d:c^{3}=d^{2}=1|=|a,b:(ba)^{3}=(aba)^{2}=1|
\end{equation*}

On the other hand $a$ and $b$ are conjugate: $d^{-1}bd=d^{-1}cc=dc^{-1}=a$.
Therefore $\omega (a)$ is the rotation of angle $\frac{2\pi }{r}$ around the
point $1\in D_{S}$, and $\omega (b)$ is its conjugate by the rotation of $%
\pi $ around the point $0\in D_{S}$, or equivalently, $\omega (b)$ is the
rotation of angle $\frac{2\pi }{r}$ around the point $-1\in D_{S}$.
\end{proof}

\section{Some 3-dimensional holonomies}

The geometry $(X_{(S,S)},Q)$, is a particular case of the geometry $(X_{(R,S)},Q)$ studied in \cite{LM2014}. We recall this particular geometry
in the following subsection. In the remain of the section we will lift the
holonomies of $(O,0|2,3,r)$, constructed in the above section, to the  group of  isometries of $(X_{(S,S)},Q)$.

\subsection{The Riemanian geometry $(X_{(S,S)},Q)$}

The matrix product on the following set of $2\times 2$ complex matrices
\begin{equation*}
X_{(S,S)}=\left\{ \left[
\begin{array}{cc}
t-iSz & \sqrt{S}(x+iy) \\
\sqrt{S}(x-iy) & t+iSz
\end{array}%
\right] \,\left\vert \,x,y,z,t\in \mathbb{R}%
,t^{2}+S^{2}z^{2}-S(x^{2}+y^{2})=1\right. \right\}
\end{equation*}%
provides a Lie group structure on the 3-dimensional quadric
\begin{equation*}
X_{(S,S)}=\left\{ (x,y,z.t)\in \mathbb{R}^{4}\,|%
\,t^{2}+S^{2}z^{2}-S(x^{2}+y^{2})=1\right\}
\end{equation*}%
contained in $\mathbb{R}^{4}$.

It is proved in \cite{LM2014} that $X_{(S,S)}$   is isomorphic to the 3-sphere
if $S<0$ and it is isomorphic to $SL(2,\mathbb{R})$ if $S>0$. The limit Lie
group when $S\rightarrow 0$
\begin{equation*}
X_{1}=\lim_{S\rightarrow 0}X_{(S,S)}
\end{equation*}%
is isomorphic to the Heisenberg group.

The metric $Q$ in $X_{(S,S)}$ is the left invariant metric defined, in the
canonical basis $e_{1}=(1,0,0,1),e_{2}=(0,1,0,1),e_{3}=(0,0,1,1)$ at the
identity $(0,0,0,1)$, by the identity matrix $<1,1,1,1>$. Observe that $(X_{-1,-1},Q)$ is the spherical geometry $S^{3}$, $(X_{1,1},Q)$ is the $\widetilde{SL(2,\mathbb{R})}$ geometry and $(X_{1},Q)$ is a Nil geometry. Therefore, in this way we
can study continuos transitions between some of the Thurston's geometries. Namely,
Spherical-Nil-$\widetilde{SL(2,\mathbb{R})}$.

Each element $q\in X_{(S,S)}$ defines an isometry $l_{q}$ by left product.
The right product $r_{q^{\prime }}$ by any diagonal element $q^{\prime }$ of
$X_{(S,S)}$ is also an isometry. The \emph{left-right notation} for an
isometry is a pair of elements of  $X_{(S,S)}$
$(q,q^{\prime })$, where $q$ acts by left product and $q^{\prime }$ acts by
right product. The composition of such isometries is given by
\begin{equation*}
(q_{1},q_{1}^{\prime })\cdot (q_{2},q_{2}^{\prime
})=(q_{2}q_{1},q_{1}^{\prime }q_{2}^{\prime })
\end{equation*}%
These isometries can be expressed also as the restriction to $X_{(S,S)}$ of
linear maps in $\mathbb{R}^{4}$ and we denote by $lm(q)$ and $rm(q^{\prime
}) $ the corresponding $4\times 4$ matrices. This second notation is
convenient when we consider the limit situation $S\rightarrow 0$.

The manifold $X_{(S,S)}$ has a Seifert fibered structure, where the $\mathbb{%
S}^{1}$-action is given by the right product
\begin{equation*}
\begin{array}{ccc}
X(S,S)\times \mathbb{S}^{1} & \longrightarrow & X(S,S) \\
\begin{bmatrix}
\ast & \sqrt{R}(x+iy) \\
\ast & t+iSz%
\end{bmatrix}%
\begin{bmatrix}
e^{-i\theta } & 0 \\
0 & e^{i\theta }%
\end{bmatrix}
& \rightarrow &
\begin{bmatrix}
\ast & e^{i\theta }\left( \sqrt{R}(x+iy)\right) \\
\ast & e^{i\theta }\left( t+iSz\right)%
\end{bmatrix}%
,%
\end{array}%
\end{equation*}

Its base space $D_{S}$ is $\mathbb{C}P^{1}$ for $S<0$, and $D_{S}=\left\{
w\in \mathbb{C}P^{1};w\overline{w}<\frac{1}{S}\right\} $ for $S>0$. The
projection of the Seifert fibered structure $X_{(S,S)}$ is given by
\begin{equation*}
\begin{array}{ccc}
p:X_{(S,S)} & \longrightarrow & D_{S} \\
(x,y,z,t) & \rightarrow & \frac{x+iy}{t+iSz}%
\end{array}%
.
\end{equation*}

The action of $l_{q}$ on $X_{(S,S)}$, where $q=\left[
\begin{array}{cc}
d-iSc & \sqrt{S}(a+ib) \\
\sqrt{S}(a-ib) & d+iSc
\end{array}
\right]$, projects onto the homography
\begin{equation*}
\begin{array}{ccc}
h_{q}:D_{S} & \longrightarrow & D_{S} \\
w & \rightarrow & \frac{(d-iSc)w+(a+bi)}{S(a-bi)w+(d+iSc)}%
\end{array}%
\end{equation*}
of $D_{S}$.

For our purposes it is convenient to write the metric matrix $Q$ of $%
X_{(S,S)}$ in Seifert product coordinates $(\mu ,\nu ,\zeta )\in D_{S}\times
\mathbb{S}^{1}$. The metric matrix of $Q$ in these coordinates is the
following:
\begin{equation}
Q((\mu ,\nu ,\theta )=\left[
\begin{array}{ccc}
\frac{\nu ^{2}+1}{\left( 1-S\left( \mu ^{2}+\nu ^{2}\right) \right) ^{2}} & -%
\frac{\mu \nu }{\left( 1-S\left( \mu ^{2}+\nu ^{2}\right) \right) ^{2}} &
\frac{\nu }{S\left( 1-S\left( \mu ^{2}+\nu ^{2}\right) \right) } \\
&  &  \\
\ast & \frac{\mu ^{2}+1}{\left( 1-S\left( \mu ^{2}+\nu ^{2}\right) \right)
^{2}} & -\frac{\mu }{S\left( 1-S\left( \mu ^{2}+\nu ^{2}\right) \right) } \\
&  &  \\
\ast & \ast & \frac{1}{S^{2}}%
\end{array}%
\right] .  \label{maqSc}
\end{equation}%
Note that this formula makes sense for all $X_{(S,S)}$ for $S>0$. For $S<0$
it applies only out of the 1-sphere $C_{\infty }=\left\{ (x,y,0,0)\in
X_{(S,S)}|-S(x^{2}+y^{2})=1\right\} $.

Observe that in these coordinates the value of $Q$ do not depend on the
third coordinate $\zeta$, it only depends on the coordinates in the base of
the Seifert fibration. It is a fibred metric.

It is an easy exercise to check that the $2\times 2$ principal submatrix of (%
\ref{maqSc}), which is the restriction to the base of the Seifert manifold $%
X_{(-1,1)}$, coincides with the hyperbolic metric on the upper sheet $UH$ of
the hyperboloid $-x_{1}^{2}-x_{2}^{2}+x_{3}^{2}=1/S$ for $S>0$, and with the
usual metric on the 2-sphere $\mathbb{S}_{1/\sqrt{|S|}}^{2}$ of radius $1/%
\sqrt{|S|}$ for $S<0$. Indeed, in case $S>0$, it is enough to consider the
pullback of the usual metric in $UH$ by the map:
\begin{equation*}
\begin{array}{cccc}
\pi _{UH}: & \mathbb{R}^{2} & \longrightarrow & UH=\left\{
(x_{1},x_{2},x_{3})\in \mathbb{R}%
^{3}|-x_{1}^{2}-x_{2}^{2}+x_{3}^{2}=1/S,x_{3}>0\right\} \\
& (\mu ,\nu ) & \rightarrow & \left( \frac{\mu }{\sqrt{1-S(\mu ^{2}+\nu ^{2})%
}},\frac{\nu }{\sqrt{1-S(\mu ^{2}+\nu ^{2})}},\frac{1}{\sqrt{S(1-S(\mu
^{2}+\nu ^{2}))}}\right)%
\end{array}%
\end{equation*}%
In case $S<0$, consider the pullback of the usual metric in the north
hemisphere $\mathbb{S}_{+}^{2}$ of $\mathbb{S}_{1/\sqrt{|S|}}^{2}$ (induced
by the Euclidean metric in $\mathbb{R}^{3}$) by the map:

\begin{equation*}
\begin{array}{cccc}
\pi _{\mathbb{S}_{+}^{2}}: & \mathbb{R}^{2} & \longrightarrow & \mathbb{S}%
_{+}^{2}=\left\{ (x_{1},x_{2},x_{3})\in \mathbb{R}%
^{3}|x_{1}^{2}+x_{2}^{2}+x_{3}^{2}=1/|S|,x_{3}>0\right\} \\
& (\mu ,\nu ) & \rightarrow & \left( \frac{\mu }{\sqrt{1-(|S|(\mu ^{2}+\nu
^{2}))}},\frac{\nu }{\sqrt{1-(|S|(\mu ^{2}+\nu ^{2}))}},\frac{1}{\sqrt{%
|S|(1-|S|(\mu ^{2}+\nu ^{2}))}}\right)%
\end{array}%
\end{equation*}

These maps are the projections from the origen $(0,0,0)$ of the tangent
plane at $(0,0,1/S)$ to $UH$ (or $\mathbb{S}_{+}^{2}$, as the case may be).
Their inverse maps define the Klein-Beltrami models of the hyperbolic and
spherical 2-dimensional geometries, respectively (see Figure \ref%
{fKleinModel}). Therefore the metric on the base of the Seifert structure of
$X_{(S,S)}$, in coordinates $(\mu ,\nu )\in D_{S}$, is the usual one
of the hyperbolic geometry for $S>0$, and of the spherical geometry for $S<0$.

\begin{figure}[h]
\label{fKleinModel}
\par
\begin{center}
\epsfig{file=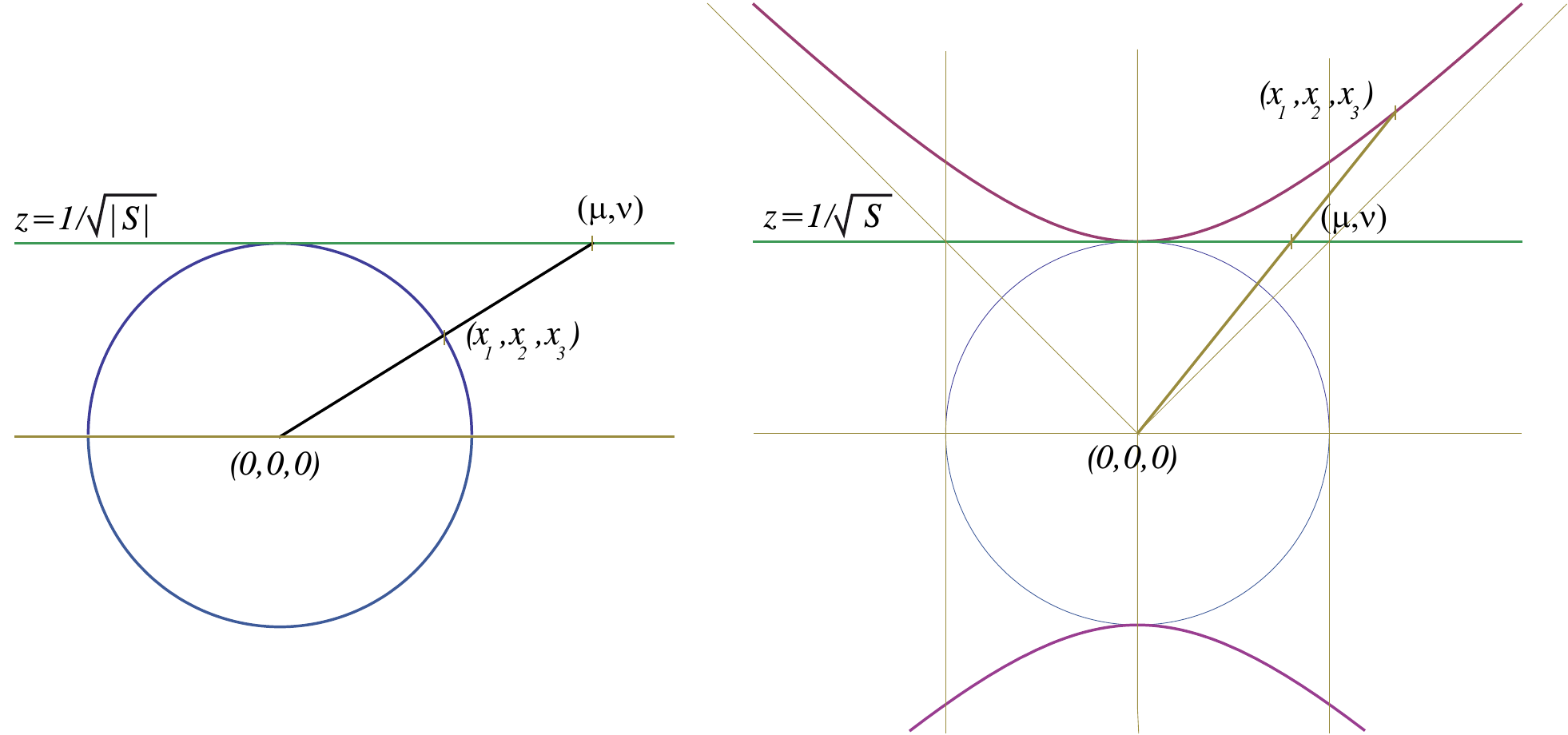,height=5.5cm}
\end{center}
\caption{The projections $\protect\pi_{NH}$ and $\protect\pi_{UH}$.}
\end{figure}

\subsection{ Lifting holonomies}

In this section we find the subgroups of isometries of $(X_{(S,S)},Q)$ that
project onto  the holonomies of $(O,0|2,3,r)$ under the
projection
\begin{equation*}
p:X_{(S,S)}\longrightarrow D_{S}.
\end{equation*}%
In a different section we will realize this lifted holonomies by
constructing suitable $3$-conemanifolds modelled on $(X_{(S,S)},Q)$.

We first define two isometries in $(X_{(S,S)},Q)$, $S\neq 0$, that projet
onto the elements $a$ and $b$, defined in Proposition \ref{paybend}, under
the map
\begin{equation*}
p:X_{(S,S)}\longrightarrow D_{S}
\end{equation*}%
as follows.

Consider the isometry $R(\alpha ,\theta )$ of $(X_{(S,S)},Q)$, given in
left-right notation by
\begin{equation}
R(\alpha ,\theta )=\left( \left[
\begin{array}{cc}
e^{i\alpha } & 0 \\
0 & e^{-i\alpha }
\end{array}%
\right] ,\left[
\begin{array}{cc}
e^{-i\theta } & 0 \\
0 & e^{i\theta }
\end{array}%
\right] \right) \quad \overset{p}{\longrightarrow }\quad w=e^{2i\alpha }z.
\label{lralfateta}
\end{equation}%
As a linear map it is given by the following matrix
\begin{equation}
lm(R(\alpha ,\theta ))=\left[
\begin{array}{cc}
\begin{array}{cc}
\cos (\alpha +\theta ) & -\sin (\alpha +\theta ) \\
\sin (\alpha +\theta ) & \cos (\alpha +\theta )%
\end{array}
& 0 \\
0 &
\begin{array}{cc}
\cos (\alpha -\theta ) & \frac{-\sin (\alpha -\theta )}{S} \\
S\sin (\alpha -\theta ) & \cos (\alpha -\theta )%
\end{array}
\end{array}%
\right]  \label{ralfateta}
\end{equation}%
The projection $p:X_{(S,S)}\longrightarrow D_{S}$ maps this isometry $%
R(\alpha ,\theta )$ onto a rotation of angle $2\alpha $ around the origin $%
O\in D_{S}$.

The translations in $X_{(S,S)}$ sending the point $(0,0,0,1)$, (such that $%
p(0,0,0,1)=0\in D_{S}$), to the point $\frac{1}{\sqrt{|1-S|}}(\pm 1,0,0,1)$
(such that $p(\frac{1}{\sqrt{|1-S|}}(\pm 1,0,0,1))=\pm 1\in D_{S}$) are
respectively
\begin{equation}
\begin{array}{llll}
t_{1} & =\left( \frac{1}{\sqrt{|1-S|}}\left[
\begin{array}{cc}
1 & \sqrt{S} \\
\sqrt{S} & 1
\end{array}%
\right] ,\left[
\begin{array}{cc}
1 & 0 \\
0 & 1
\end{array}%
\right] \right) & \overset{p}{\longrightarrow } & w=\frac{z+1}{Sz+1} \\
t_{-1} & =\left( \frac{1}{\sqrt{|1-S|}}\left[
\begin{array}{cc}
1 & -\sqrt{S} \\
-\sqrt{S} & 1
\end{array}%
\right] ,\left[
\begin{array}{cc}
1 & 0 \\
0 & 1
\end{array}%
\right] \right) & \overset{p}{\longrightarrow } & w=\frac{z-1}{-Sz+1}%
\end{array}%
.  \label{lrt1ytm1}
\end{equation}%
In linear matrix notation they are

\begin{eqnarray}  \label{et1ytm1}
lm(t_{1})&=& \frac{1}{\sqrt{|1-S|}} \left[
\begin{array}{cccc}
1 & 0 & 0 & 1 \\
0 & 1 & S & 0 \\
0 & 1 & 1 & 0 \\
S & 0 & 0 & 1
\end{array}
\right]; \\
lm(t_{-1})&=& \frac{1}{\sqrt{|1-S|}} \left[
\begin{array}{cccc}
1 & 0 & 0 & -1 \\
0 & 1 & -S & 0 \\
0 & -1 & 1 & 0 \\
-S & 0 & 0 & 1
\end{array}
\right].
\end{eqnarray}

Define $a(\alpha ,\theta )$ and $b(\alpha ,\theta )$ as the conjugate
elements of $R(\alpha ,\theta )$ by the translations $t_{1}$ and $t_{-1}$
respectively. Then the projections by $p$ of $a(\alpha ,\theta )$ and $%
b(\alpha ,\theta )$ coincide, respectively, with the elements $a$ and $b$
defined in Proposition \ref{paybend}, where $\alpha =\pi /r$.

\begin{eqnarray}  \label{ebalfateta}
a(\alpha ,\theta ) &=& t_{1}\cdot R(\alpha ,\theta )\cdot t_{1}^{-1} \\
b(\alpha ,\theta ) &=& t_{-1}\cdot R(\alpha ,\theta )\cdot t_{-1}^{-1}
\end{eqnarray}

After some computations and substitution of the value of $S$ as a function
of $\alpha $, by means of equations (\ref{eesealfa}) and (\ref{esalfa}), the
expressions for $a(\alpha ,\theta )$ and $b(\alpha ,\theta )$ in left-right
notation become
\begin{eqnarray*}
a(\alpha ,\theta ) &=&(M,R) \\
b(\alpha ,\theta ) &=&(N,R)
\end{eqnarray*}%
where
\begin{eqnarray*}
M &=&\frac{1}{2}\left[
\begin{array}{cc}
2\cos (\alpha )+i & -i\sqrt{2\cos (2\alpha )-1} \\
i\sqrt{2\cos (2\alpha )-1} & 2\cos (\alpha )-i%
\end{array}%
\right] \\
N &=&\frac{1}{2}\left[
\begin{array}{cc}
2\cos (\alpha )+i & i\sqrt{2\cos (2\alpha )-1} \\
-i\sqrt{2\cos (2\alpha )-1} & 2\cos (\alpha )-i%
\end{array}%
\right] \\
R &=&\left[
\begin{array}{cc}
e^{-i\theta } & 0 \\
0 & e^{i\theta }
\end{array}%
\right]
\end{eqnarray*}%
and as $4\times 4$ matrices they are
\begin{equation}
lm(a(\alpha ,\theta ))=\frac{1}{2}\left[
\begin{array}{cc}
A_{11} & A_{12} \\
A_{21} & A_{22}
\end{array}%
\right]  \label{eaAij}
\end{equation}%
and
\begin{equation}
lm(b(\alpha ,\theta ))=\frac{1}{2}\left[
\begin{array}{cc}
A_{11} & -A_{12} \\
-A_{21} & A_{22}
\end{array}%
\right] ,  \label{ebBij}
\end{equation}%
where
\begin{eqnarray*}
A_{11} &=&\left(
\begin{array}{cc}
2\cos (\alpha )\cos (\theta )-\sin (\theta ) & -2\cos (\alpha )\sin (\theta
)-\cos (\theta ) \\
2\cos (\alpha )\sin (\theta )+\cos (\theta ) & 2\cos (\alpha )\cos (\theta
)-\sin (\theta )
\end{array}%
\right) \\
&& \\
A_{12} &=&\left(
\begin{array}{cc}
(1-2\sin (\alpha ))\cos (\theta ) & (1+2\sin (\alpha ))\sin (\theta ) \\
(1-2\sin (\alpha )\sin (\theta ) & -(1+2\sin (\alpha ))\cos (\theta )
\end{array}%
\right) \\
&& \\
A_{21} &=&\left(
\begin{array}{cc}
(1+2\sin (\alpha ))\cos (\theta ) & -(1+2\sin (\alpha ))\sin (\theta ) \\
-(1-2\sin (\alpha ))\sin (\theta ) & -(1-2\sin (\alpha ))\cos (\theta )
\end{array}%
\right) \\
&& \\
A_{22} &=&\left(
\begin{array}{cc}
2\cos (\alpha )\cos (\theta )+\sin (\theta ) & \frac{(1+2\sin (\alpha
))(2\cos (\alpha )\sin (\theta )-\cos (\theta ))}{1-2\sin (\alpha )} \\
\frac{(1-2\sin (\alpha ))(-2\cos (\alpha )\sin (\theta )+\cos (\theta ))}{%
1+2\sin (\alpha )} & 2\cos (\alpha )\cos (\theta )+\sin (\theta )
\end{array}%
\right) .
\end{eqnarray*}

The two elements $a(\alpha ,\theta )$ and $b(\alpha ,\theta )$ are the
generators, for each $\theta $, of a group of isometries of $X_{(S,S)}$, $%
S\neq 0$, that projects onto the holonomy of the conemanifold $%
(O,0|2,3,r)$, where $\alpha =\frac{\pi }{r}$.

The case $S=0$ is a limit case. By equations (\ref{eesealfa}) and (\ref%
{esalfa})
\begin{equation*}
S=0 \Longleftrightarrow \alpha =\frac{\pi}{6}
\end{equation*}

The natural definition is
\begin{eqnarray}
lm(a(\frac{\pi}{6} ,\theta ))&=\lim _{\alpha \to \frac{\pi}{6}}lm( a(\alpha
,\theta)) \\
lm(b(\frac{\pi}{6} ,\theta ))&=\lim _{\alpha \to \frac{\pi}{6}}lm( b(\alpha
,\theta))
\end{eqnarray}

The values of all the elements in these matrices are finite but the value of
the element $(1,2)$ in $A_{22}$ for $\alpha =\pi /6$, which is
\begin{equation}
\lim_{\alpha \rightarrow \frac{\pi }{6}}\frac{-(1+2\sin (\alpha ))(-2\cos
(\alpha )\sin (\theta )+\cos (\theta ))}{1-2\sin (\alpha )}  \label{ea34} \\
=\frac{-2(-\sqrt{3}\sin (\theta )+\cos (\theta ))}{0},
\end{equation}%
becomes infinite or indeterminate according to the value of $\theta $. On
the other hand the value of the element $(2,2)$ in $A_{2,2}$ which is
\begin{equation*}
\sqrt{3}\cos (\theta )+\sin (\theta )
\end{equation*}%
should be equal to $1$ in order that $a(\frac{\pi }{6},\theta )$ and $b(%
\frac{\pi }{6},\theta )$ be isometries of the group $X_{1}$. Therefore
\begin{equation*}
\sqrt{3}\cos (\theta )+\sin (\theta )=1\Longleftrightarrow \theta =\frac{\pi
}{6}
\end{equation*}%
for this value of $\theta $ the element (\ref{ea34}) is indeterminate and
can be obtained by the l'Hopital rule.

Observe that if $\theta =\alpha +t(6\alpha -\pi)$, $t\in \mathbb{R}$, then
\begin{equation*}
\alpha \to \frac{\pi}{6}\quad \Longrightarrow \quad \theta \to \frac{\pi}{6}
\end{equation*}

Lets define
\begin{eqnarray}  \label{eatbt}
a_{t} =&\lim_{\alpha \to \frac{\pi}{6}}lm(a(\alpha,\alpha +t(6\alpha -\pi)))
= \left[
\begin{array}{cccc}
\frac{1}{2} & -\frac{\sqrt{3}}{2} & 0 & \frac{1}{2} \\
\frac{\sqrt{3}}{2} & \frac{1}{2} & 0 & -\frac{\sqrt{3}}{2} \\
\frac{\sqrt{3}}{2} & -\frac{1}{2} & 1 & -\frac{1}{2} \sqrt{3} (8 t+1) \\
0 & 0 & 0 & 1%
\end{array}
\right]  \notag \\
\\
b_{t} =& \lim_{\alpha \to \frac{\pi}{6}}lm(b(\alpha,\alpha +t(6\alpha
-\pi)))= \left[
\begin{array}{cccc}
\frac{1}{2} & -\frac{\sqrt{3}}{2} & 0 & -\frac{1}{2} \\
\frac{\sqrt{3}}{2} & \frac{1}{2} & 0 & \frac{\sqrt{3}}{2} \\
-\frac{\sqrt{3}}{2} & \frac{1}{2} & 1 & -\frac{1}{2} \sqrt{3} (8 t+1) \\
0 & 0 & 0 & 1%
\end{array}
\right]  \notag
\end{eqnarray}

For each $t\in \mathbb{R}$, $a_{t}$ and $b_{t}$ are the generators of a
subgroup of isometries in $X_{1}$ which projets on the holonomy of the
conemanifold $(O0|2,3,6)$.

\subsection{Representations of the trefoil knot group}

\begin{proposition}
Let $\mathbb{T}$ be the oriented left-handed trefoil knot in $\mathbb{S}^{3}$ (Figure %
\ref{trebol}), with group
\begin{equation*}
G(\mathbb{T})=\pi_{1}(\mathbb{S}^{3}\setminus K)=|a,b; aba=bab|
\end{equation*}
where $a$ and $b$ are meridian elements. The maps
\begin{equation}  \label{ewtrebol}
\begin{array}{cccc}
\omega_{\mathbb{T}}: & G(\mathbb{T})=\pi_{1}(\mathbb{S}^{3}\setminus \mathbb{T}) & \longrightarrow &
Isom(X_{(S,S)},Q),\quad S\neq 0 \\
& a & \to & a(\alpha ,\theta) \\
& b & \to & b(\alpha ,\theta)%
\end{array}%
\end{equation}
and
\begin{equation}  \label{erepresentationt}
\begin{array}{cccc}
\omega_{\mathbb{T}}: & G(\mathbb{T})=\pi_{1}(\mathbb{S}^{3}\setminus \mathbb{T}) & \longrightarrow &
Isom(X_{1},Q) \\
& a & \to & a_{t} \\
& b & \to & b_{t}%
\end{array}%
\end{equation}
are homomorphisms.
\end{proposition}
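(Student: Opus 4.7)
The trefoil group is given by a single relation, $G(\mathbb{T}) = \langle a, b \mid aba = bab \rangle$, so to prove that the assignments in (\ref{ewtrebol}) and (\ref{erepresentationt}) extend to homomorphisms it suffices to verify the braid relation $aba = bab$ for the images. No other calculation is required; in particular the relations $(ba)^{3}=(aba)^{2}=1$ that appear in the holonomy of the base orbifold $(O,0|2,3,r)$ play no role here, since we are lifting to the full trefoil group rather than to the orbifold group.

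For the case $S\neq 0$, I would exploit the left-right notation together with the composition law
\[
(q_{1},q_{1}')\cdot (q_{2},q_{2}')=(q_{2}q_{1},\,q_{1}'q_{2}'),
\]
which yields
\[
a(\alpha,\theta)\,b(\alpha,\theta)\,a(\alpha,\theta)=(MNM,\,R^{3}),\qquad
b(\alpha,\theta)\,a(\alpha,\theta)\,b(\alpha,\theta)=(NMN,\,R^{3}).
\]
The right-hand components already agree, so the braid relation reduces to the single $2\times 2$ matrix identity $MNM=NMN$. I would then carry out this matrix computation directly. A useful preliminary is to observe that $\det M=\det N=1$ and $\operatorname{tr}(M)=\operatorname{tr}(N)=2\cos\alpha$, so both lie in $\mathrm{SL}_{2}(\mathbb{C})$ with the same trace. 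Setting $p=2\cos\alpha$ and $q=\sqrt{2\cos(2\alpha)-1}$ and using the identity $q^{2}=p^{2}-3$, one computes that both products collapse to the very simple diagonal matrix
\[
MNM=NMN=\begin{pmatrix} i & 0 \\ 0 & -i \end{pmatrix},
\]
which in particular is independent of $\alpha$. This verifies $\omega_{\mathbb{T}}$ is a homomorphism whenever $S\neq 0$.

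For the Nil case (\ref{erepresentationt}), I would argue by continuity. For each $t\in\mathbb{R}$ and each $\alpha$ near $\pi/6$ with $\alpha\neq\pi/6$, the previous paragraph applied with $\theta=\alpha+t(6\alpha-\pi)$ shows that $a(\alpha,\theta)$ and $b(\alpha,\theta)$ satisfy the braid relation in $\mathrm{Isom}(X_{(S,S)},Q)$. The matrices $a_{t}$ and $b_{t}$ are, by their definition in (\ref{eatbt}), the entry-wise limits of $lm(a(\alpha,\theta))$ and $lm(b(\alpha,\theta))$ as $\alpha\to\pi/6$ along this path (the only potentially problematic entry being the $(1,2)$ entry of $A_{22}$, whose indeterminacy is resolved via l'Hôpital exactly because $\theta\to\pi/6$ at the correct rate). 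Since $4\times 4$ matrix multiplication is jointly continuous, the identity $a(\alpha,\theta)\,b(\alpha,\theta)\,a(\alpha,\theta)=b(\alpha,\theta)\,a(\alpha,\theta)\,b(\alpha,\theta)$ passes to the limit to give $a_{t}b_{t}a_{t}=b_{t}a_{t}b_{t}$ in $\mathrm{Isom}(X_{1},Q)$.

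The main obstacle is the bookkeeping in the identity $MNM=NMN$; although the computation is routine, it is non-trivial and relies on substituting the relation $2\cos(2\alpha)-1=4\cos^{2}\alpha-3$ at the right moment to obtain the cancellations. A small amount of conceptual saving comes from the symmetry $N=J M J^{-1}$ with $J=\mathrm{diag}(1,-1)$, which implies $NMN = J(M\cdot JMJ\cdot M)J$ and makes the parallel between the two sides visible; but ultimately the entries must be checked, and the clean form of the answer (diagonal, $\alpha$-independent) serves as a sanity check that no calculation error has been made.
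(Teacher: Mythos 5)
Your proposal is correct, and for the main case $S\neq 0$ it follows essentially the same route as the paper: reduce the braid relation via the left-right composition law to the single $2\times 2$ identity $MNM=NMN$, and verify that both products equal $\left[\begin{smallmatrix} i & 0 \\ 0 & -i\end{smallmatrix}\right]$ (your trace/determinant bookkeeping with $q^{2}=p^{2}-3$ checks out; the paper instead first computes $(NM)^{3}=(MNM)^{2}=-I_{2\times 2}$ and cancels the invertible factor $MNM$ from $MNMNMN=MNMMNM$, a slightly slicker way to reach the same diagonal matrix). Where you genuinely diverge is the Nil case: the paper verifies $a_{t}b_{t}a_{t}=b_{t}a_{t}b_{t}$ by direct multiplication of the explicit $4\times 4$ matrices, whereas you pass the already-established relation to the limit $\alpha\to\pi/6$ along $\theta=\alpha+t(6\alpha-\pi)$ using continuity of matrix multiplication. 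Your limit argument is valid, since $a_{t}$ and $b_{t}$ are \emph{defined} in (\ref{eatbt}) as exactly those entrywise limits and the braid relation is preserved under the passage from left-right notation to linear maps; it also buys you economy, avoiding a second computation. What the paper's direct computation buys in exchange is the explicit common value
\begin{equation*}
a_{t}b_{t}a_{t}=b_{t}a_{t}b_{t}=\left[
\begin{array}{cccc}
-1 & 0 & 0 & 0 \\
0 & -1 & 0 & 0 \\
0 & 0 & 1 & -2\sqrt{3}(6t+1) \\
0 & 0 & 0 & 1
\end{array}
\right],
\end{equation*}
which is not a mere by-product: this matrix is $d_{t}$, and it (together with $d_{t}^{2}$) is used later to build the fundamental domain for the Nil structure, so if you adopt the continuity argument you will still need to compute this product eventually.
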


\begin{figure}[h]
\begin{center}
\epsfig{file=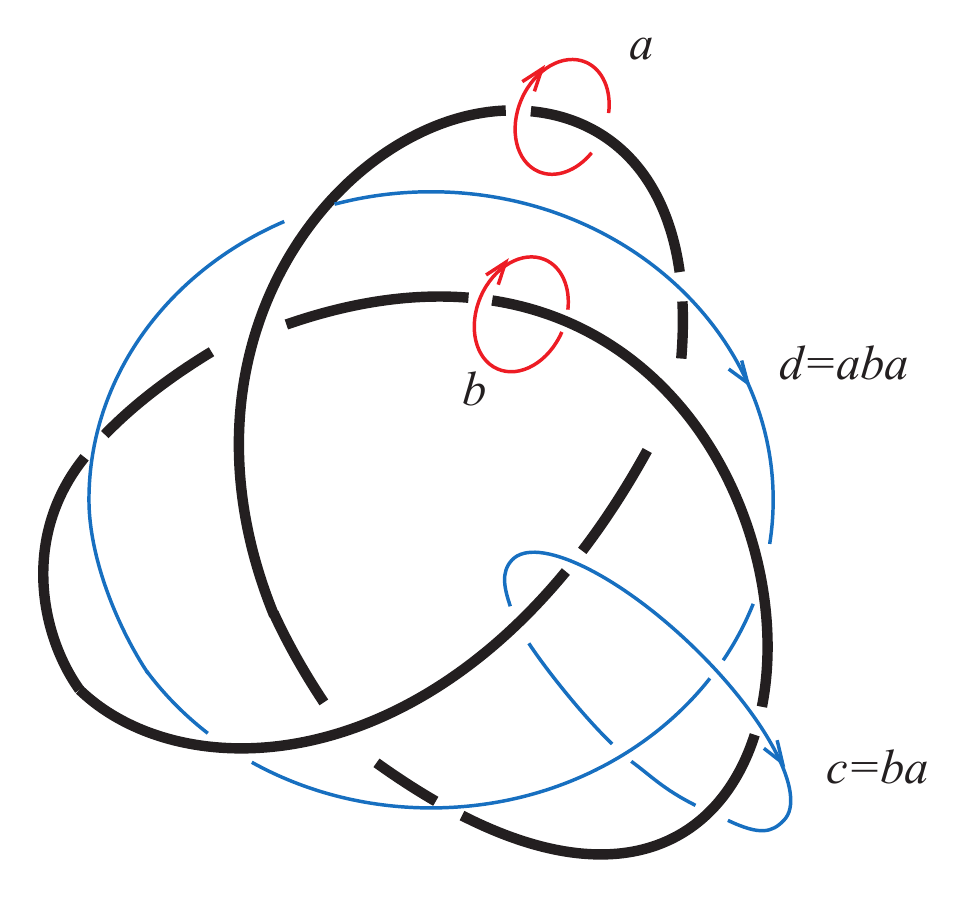,height=7cm}
\end{center}
\caption{The trefoil knot.}
\label{trebol}
\end{figure}

\begin{proof}
To prove that the map $\omega _{\mathbb{T}}$ in (\ref{ewtrebol}) is a homomorphism it
is enough to check the relator
\begin{equation}
a(\alpha ,\theta )b(\alpha ,\theta )a(\alpha ,\theta )=b(\alpha ,\theta
)a(\alpha ,\theta )b(\alpha ,\theta )  \label{eababab}
\end{equation}%
or equivalently
\begin{equation}
(MNM,R^{3})=(NMN,R^{3})  \label{eababab1}
\end{equation}%
A straightforward computation yields
\begin{equation*}
(NM)^{3}=(MNM)^{2}=-I_{2\times 2}
\end{equation*}%
Therefore
\begin{equation*}
MNMNMN=MNMMNM=-I_{2\times 2}\Longrightarrow NMN=MNM=\left[
\begin{array}{cc}
i & 0 \\
0 & -i
\end{array}%
\right]
\end{equation*}

The proof that the same is true in (\ref{erepresentationt})
\begin{equation*}
a_{t}b_{t}a_{t}=b_{t}a_{t}b_{t}
\end{equation*}%
can be done by direct computation using (\ref{eatbt}). In fact
\begin{equation*}
a_{t}b_{t}a_{t}=b_{t}a_{t}b_{t}=\left[
\begin{array}{cccc}
-1 & 0 & 0 & 0 \\
0 & -1 & 0 & 0 \\
0 & 0 & 1 & -2\sqrt{3}(6t+1) \\
0 & 0 & 0 & 1%
\end{array}%
\right]
\end{equation*}
\end{proof}

\begin{corollary}
For each pair $(\alpha ,\theta )$, $\alpha \in \left[ 0,\frac{\pi }{6}%
\right) \cup \left( \frac{\pi }{6},\frac{5\pi }{6}\right) $, there exits a
group of isometries in $(X_{(S,S)},Q)$ generated by $a(\alpha ,\theta )$ and
$b(\alpha ,\theta )$, which is the epimorphic image of the trefoil knot
group, such that it  projects by $p:X_{(S,S)}\longrightarrow D_{S}$, onto the holonomy
 of $(O,0|2,3,r)$, $r=\frac{\pi }{\alpha }$.

For $\alpha =\frac{\pi }{6}$ there exists infinite pairs of isometries $%
(a_{t},b_{t})$, $t\in \mathbb{R}$, of $(X_{1},Q)$ generating subgroups which
are the epimorphic image of the trefoil knot group, such that they all
projects by $p:X_{1}\longrightarrow \mathbb{C}$, onto the holonomy of $%
(O,0|2,3,6)$. \qed
\end{corollary}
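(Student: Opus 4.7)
The plan is to deduce the corollary as a direct repackaging of the preceding proposition together with the already-computed projection formulas for $R(\alpha,\theta)$ and $t_{\pm 1}$. The proposition establishes that the assignments $a\mapsto a(\alpha,\theta),\ b\mapsto b(\alpha,\theta)$ (respectively $a\mapsto a_t,\ b\mapsto b_t$) respect the single defining relator $aba=bab$ of $G(\mathbb{T})$. Hence each such assignment extends to a homomorphism, and its image is by definition an epimorphic quotient of $G(\mathbb{T})$; the subgroup generated by $a(\alpha,\theta),b(\alpha,\theta)$ (resp.\ $a_t,b_t$) is that image.

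The remaining point for the first assertion is to identify $p\circ \omega_{\mathbb{T}}$ with the holonomy of $(O,0|2,3,r)$. By (\ref{ebalfateta}), $a(\alpha,\theta)$ and $b(\alpha,\theta)$ are conjugates of $R(\alpha,\theta)$ by $t_1$ and $t_{-1}$ respectively. The Seifert projection $p$ is a group homomorphism (on the left-action side) and, by (\ref{lralfateta}) and (\ref{lrt1ytm1}), sends $R(\alpha,\theta)$ to the rotation $w\mapsto e^{2i\alpha}w$ around $0\in D_S$ and sends $t_{\pm 1}$ to the translations $0\mapsto \pm 1$. Therefore $p(a(\alpha,\theta))$ and $p(b(\alpha,\theta))$ are exactly the rotations of angle $2\pi/r=2\alpha$ about the points $\pm 1\in D_S$, which are the generators $a,b$ of the holonomy produced in Proposition \ref{paybend}. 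Surjectivity onto the holonomy is immediate because the holonomy itself is generated by these two rotations.

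For the second assertion ($\alpha=\pi/6$, $S=0$), the same argument applies to the limit matrices $a_t,b_t$, using that the proposition already verifies $a_tb_ta_t=b_ta_tb_t$. Along the parametrisation $\theta=\alpha+t(6\alpha-\pi)$ we have $\theta\to\pi/6$ as $\alpha\to\pi/6$, so by continuity $p(a_t),p(b_t)$ are the rotations of angle $\pi/3$ about $\pm 1$ in $\mathbb{C}$, i.e.\ the Euclidean holonomy of $(O,0|2,3,6)$. Since the explicit matrices (\ref{eatbt}) differ in the $(3,4)$-entry by a quantity linear in $t$, distinct values of $t$ give distinct pairs of isometries, yielding an $\mathbb{R}$-family of lifts of the same downstairs holonomy.

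There is essentially no obstacle beyond bookkeeping: the non-trivial algebraic identity $MNM=NMN$ (and its Nil analogue) was dispatched in the proposition, and the projection statement reduces to reading off $p(R(\alpha,\theta))$ and $p(t_{\pm 1})$ from their explicit left-right expressions. The only point requiring a line of justification is the continuity used at $\alpha=\pi/6$, but this is granted by the fact that the entries of $lm(a(\alpha,\theta))$ and $lm(b(\alpha,\theta))$ are analytic in $(\alpha,\theta)$ away from the indeterminacy in the $(3,4)$-entry of $A_{22}$, and the chosen curve $\theta=\alpha+t(6\alpha-\pi)$ is precisely the one along which that indeterminacy is resolved by l'H\^opital's rule.
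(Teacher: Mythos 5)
Your proposal is correct and follows essentially the same route as the paper, which states the corollary with no separate proof precisely because it is the immediate assembly of Proposition on the homomorphism $\omega_{\mathbb{T}}$ (the relator check $MNM=NMN$ and its Nil analogue) with the already-established facts that $a(\alpha,\theta)=t_{1}R(\alpha,\theta)t_{1}^{-1}$ and $b(\alpha,\theta)=t_{-1}R(\alpha,\theta)t_{-1}^{-1}$ project to the rotations of angle $2\pi/r$ about $\pm 1\in D_{S}$ generating the holonomy of Proposition \ref{paybend}. Your added remarks on the continuity along $\theta=\alpha+t(6\alpha-\pi)$ and the distinctness of the pairs $(a_{t},b_{t})$ via the $(3,4)$-entry are consistent with the paper's construction of (\ref{eatbt}) and introduce no gap.
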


\begin{summary}

The  holonomy (\ref{eholonomia}) of the geometric structure of
the 2-conemanifold $(O,0|2,3,r)$ is generated by $\omega (c)$ and $\omega
(d) $, where $c=ba$, $d=aba$ and
\begin{equation*}
\pi _{1}((O0|2,3,r))=|c,d;-|.
\end{equation*}

For each $r=\frac{\pi}{\alpha}$, $0\leq \alpha <\frac{5\pi}{6}$, $\alpha
\neq \frac{\pi}{6}$, the 2 conemanifold $(O,0|2,3,r)$ has a geometric
structure modeled in the disc $D_{S}$ of radius $1/\sqrt{|S|}$, where $S=%
\frac{1-2\sin \alpha}{1+2\sin \alpha}$. For $\alpha = \frac{\pi}{6}$, $D_{S}$
is $\mathbb{C}$ and the geometry is Euclidean.

For each pair $(\alpha ,\theta )$, $\alpha \in \left[ 0,\frac{\pi }{6}
\right) \cup \left( \frac{\pi }{6},\frac{5\pi }{6}\right) $, there exits a
group of isometries in $(X_{(S,S)},Q)$ generated by $a(\alpha ,\theta )$ and
$b(\alpha ,\theta )$, which is the epimorphic image of the trefoil knot
group, such that it  projects by $p:X_{(S,S)}%
\longrightarrow D_{S}$, onto the holonomy of $(O,0|2,3,r)$, $r=\frac{\pi }{%
\alpha }$.

For $\alpha =\frac{\pi }{6}$ there exists infinite pairs of isometries $%
(a_{t},b_{t})$, $t\in \mathbb{R}$, of $(X_{1},Q)$ generating subgroups which
are the epimorphic image of the trefoil knot group, such that it
projects by $p:X_{1}\longrightarrow \mathbb{C}$, on the holonomy of $%
(O,0|2,3,6)$.
\end{summary}

\section{Construction of the 3-conemanifolds with holonomy $\protect\omega%
_{\mathbb{T}}(G(\mathbb{T}))$}

Next we construct geometrically the 3-conemanifolds
whose holonomies are the above subgroups  $\omega _{\mathbb{T}}(G(\mathbb{T}))$ in (%
\ref{ewtrebol}) and (\ref{erepresentationt}).

Define
\begin{eqnarray}
c(\alpha ,\theta ) &=&b(\alpha ,\theta )\,a(\alpha ,\theta )  \notag \\
d(\alpha ,\theta ) &=&b(\alpha ,\theta )\,a(\alpha ,\theta )\,b(\alpha
,\theta )  \notag \\
c_{t} &=&b_{t}\,a_{t} \\
d_{t} &=&b_{t}\,a_{t}\,b_{t}.  \notag
\end{eqnarray}%
Then
\begin{eqnarray}
c(\alpha ,\theta ) &=&\left( \frac{1}{2}\left[
\begin{array}{cc}
1+2i\cos (\alpha ) & \sqrt{2\cos (2\alpha )-1} \\
\sqrt{2\cos (2\alpha )-1} & 1-2i\cos (\alpha )%
\end{array}%
\right] ,\left[
\begin{array}{cc}
e^{-2i\theta } & 0 \\
0 & e^{2i\theta }%
\end{array}%
\right] \right)  \notag \\
d(\alpha ,\theta ) &=&\left( \left[
\begin{array}{cc}
i & 0 \\
0 & -i%
\end{array}%
\right] ,\left[
\begin{array}{cc}
e^{-3i\theta } & 0 \\
0 & e^{3i\theta }%
\end{array}%
\right] \right)  \notag \\
c_{t} &=&\left[
\begin{array}{cccc}
-\frac{1}{2} & -\frac{\sqrt{3}}{2} & 0 & \frac{1}{2} \\
\frac{\sqrt{3}}{2} & -\frac{1}{2} & 0 & \frac{\sqrt{3}}{2} \\
\frac{\sqrt{3}}{2} & \frac{1}{2} & 1 & -\frac{1}{2}\sqrt{3}(16t+3) \\
0 & 0 & 0 & 1%
\end{array}%
\right] \\
d_{t} &=&\left[
\begin{array}{cccc}
-1 & 0 & 0 & 0 \\
0 & -1 & 0 & 0 \\
0 & 0 & 1 & -2\sqrt{3}(6t+1) \\
0 & 0 & 0 & 1%
\end{array}%
\right]  \notag
\end{eqnarray}%
Note that
\begin{eqnarray}
d(\alpha ,\theta )d(\alpha ,\theta ) &=&\left( \left[
\begin{array}{cc}
-1 & 0 \\
0 & -1%
\end{array}%
\right] ,\left[
\begin{array}{cc}
e^{-6i\theta } & 0 \\
0 & e^{6i\theta }%
\end{array}%
\right] \right) =  \notag \\
&=&\left( \left[
\begin{array}{cc}
1 & 0 \\
0 & 1%
\end{array}%
\right] ,\left[
\begin{array}{cc}
-e^{-6i\theta } & 0 \\
0 & -e^{6i\theta }%
\end{array}%
\right] \right)  \notag \\
&& \\
d_{t}d_{t} &=&\left[
\begin{array}{cccc}
1 & 0 & 0 & 0 \\
0 & 1 & 0 & 0 \\
0 & 0 & 1 & -4\sqrt{3}(6t+1) \\
0 & 0 & 0 & 1%
\end{array}%
\right]  \notag
\end{eqnarray}

Let $(\alpha ,\theta )$ be a pair such that $0\leq \alpha <\frac{\pi }{6}$.
The 2-conemanifold $(O0|2,3,\pi /\alpha )$ is hyperbolic and $S>0$. Consider
the hyperbolic triangle $\Delta $ in the interior of the Poincar\'{e} disc $%
D_{S}$ of radius $1/\sqrt{S}$, depicted in Figure \ref{fs23rhyper}. The
inverse image of $\Delta $ by the projection $p:X_{(S,S)}\longrightarrow
D_{S}$ is a fibred solid torus. Let $P=\Delta \times \mathbb{R}$ be its
universal cover.

The action of
\begin{equation*}
d(\alpha ,\theta )d(\alpha ,\theta )=\left( \left[
\begin{array}{cc}
1 & 0 \\
0 & 1%
\end{array}%
\right] ,\left[
\begin{array}{cc}
e^{-i(6\theta -\pi )} & 0 \\
0 & e^{i(6\theta -\pi )}%
\end{array}%
\right] \right)
\end{equation*}%
on $X_{(S,S)}$ induces a fibre preserving action on $P$. Suppose that each
fibre is oriented according to the action of $\mathbb{S}^{1}$ on $X_{(S,S)}$
(and of $\mathbb{R}$ on $P$).

The element $d(\alpha ,\theta )d(\alpha ,\theta )$ acts on $P$ by
translation of each fibre in the positive direction by a distance of $%
6\theta -\pi $.

Therefore the fundamental domain $D(\alpha ,\theta )$ for the action of $%
d(\alpha ,\theta )d(\alpha ,\theta )$ on $P$ is the part limited by the zero
level and the $6\theta -\pi $ level. Let us study the action of $c(\alpha
,\theta )$ and $d(\alpha ,\theta )$ on $D(\alpha ,\theta )$. Recall that
this action projects onto the action of $c$ and $d$ on $\Delta $.

\begin{figure}[h]
\begin{center}
\epsfig{file=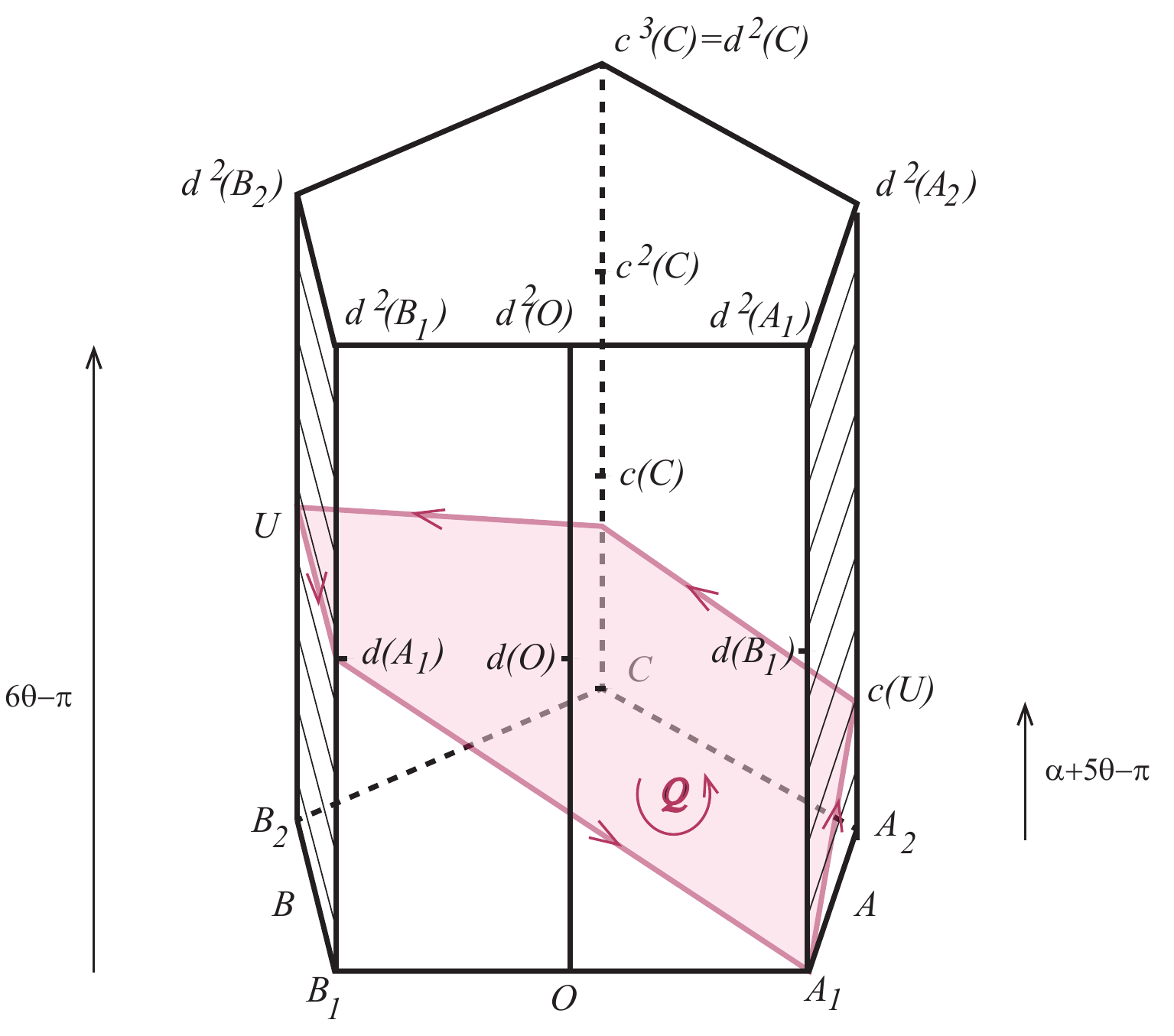,height=8cm}
\end{center}
\caption{The fundamental domain $D(\protect\alpha ,\protect\theta)$.}
\label{dalfateta}
\end{figure}

The two vertical side faces $A_{1},A_{2},d^{2}(A_{1}),d^{2}(A_{2})$ and $%
B_{1},B_{2},d^{2}(B_{1}),d^{2}(B_{2})$ of Figure \ref{dalfateta} are
horizontally fibred in such a way that the collapsing of these fibres yields
$D(\alpha ,\theta )$. The fibre $A=A_{1}A_{2}$ is sent to the fiber $%
U=d(\alpha ,\theta )(A)$ sitting on the face lying over $B=B_{1}B_{2}$, at
level $3\theta -\pi /2$. And $c(\alpha ,\theta )(U)$ belongs to the face
lying over $A=A_{1}A_{2}$, at level $\alpha +5\theta -\pi $:

In left-right notation
\begin{multline*}
d(\alpha ,\theta )=(NMN,R^{3})=\left( \left[
\begin{array}{cc}
i & 0 \\
0 & -i
\end{array}%
\right] ,\left[
\begin{array}{cc}
e^{-i3\theta } & 0 \\
0 & e^{i3\theta }
\end{array}%
\right] \right) = \\
=\left( \left[
\begin{array}{cc}
e^{i\pi /2} & 0 \\
0 & e^{-i\pi /2}
\end{array}%
\right] ,\left[
\begin{array}{cc}
e^{-i3\theta } & 0 \\
0 & e^{i3\theta }
\end{array}%
\right] \right)
\end{multline*}%
and the points $A\in D_{S}$ and $B\in D_{S}$ are respectively the
elements
\[
A=\frac{1}{\sqrt{1-S}}\left[
\begin{array}{cc}
1 & \sqrt{S} \\
\sqrt{S} & 1
\end{array}%
\right] \quad \text{and} \quad B=\frac{1}{\sqrt{1-S}}\left[
\begin{array}{cc}
1 & -\sqrt{S} \\
-\sqrt{S} & 1
\end{array}
\right].
\]
 Therefore, the following computation
\begin{multline*}
U=d(\alpha ,\theta )(A)=-\frac{1}{\sqrt{1-S}}\left[
\begin{array}{cc}
e^{i\pi /2} & 0 \\
0 & e^{-i\pi /2}
\end{array}%
\right] \left[
\begin{array}{cc}
1 & \sqrt{S} \\
\sqrt{S} & 1
\end{array}%
\right] \left[
\begin{array}{cc}
e^{-i3\theta } & 0 \\
0 & e^{i3\theta }
\end{array}%
\right] \\
=\frac{1}{\sqrt{1-S}}\left[
\begin{array}{cc}
e^{-i(3\theta -\pi /2)} & \sqrt{S}e^{i(3\theta +\pi /2)} \\
\sqrt{S}e^{-i(3\theta +\pi /2)} & e^{i((3\theta -\pi /2)}
\end{array}%
\right] \\
=\frac{1}{\sqrt{1-S}}\left[
\begin{array}{cc}
1 & -\sqrt{S} \\
-\sqrt{S} & 1
\end{array}%
\right] \left[
\begin{array}{cc}
e^{-i(3\theta -\pi /2)} & 0 \\
0 & e^{i((3\theta -\pi /2)}
\end{array}%
\right] \\
=B\left[
\begin{array}{cc}
e^{-i(3\theta -\pi /2)} & 0 \\
0 & e^{i(3\theta -\pi /2)}
\end{array}%
\right] .
\end{multline*}%
shows that the point $U=d(\alpha ,\theta )(A)$ is the point lying over the
point $B$ at the $(3\theta -\pi /2)$ level. Similarly, to compute $c(\alpha
,\theta )(U)$, consider the left-right notation of $c(\alpha ,\theta )\circ
d(\alpha ,\theta )$:
\begin{equation*}
c(\alpha ,\theta )\circ d(\alpha ,\theta )=(NM.NMN,R^{3}.R^{2})=(NMNMN,R^{5})
\end{equation*}%
where
\begin{multline*}
NMNMNM=-I_{2\times 2}\quad \Longrightarrow \\
NMNMN=-M^{-1}=-\frac{1}{1-S}\left[
\begin{array}{cc}
1 & \sqrt{S} \\
\sqrt{S} & 1
\end{array}%
\right] \left[
\begin{array}{cc}
e^{i-\alpha } & 0 \\
0 & e^{i\alpha }
\end{array}%
\right] \left[
\begin{array}{cc}
1 & -\sqrt{S} \\
-\sqrt{S} & 1
\end{array}%
\right]
\end{multline*}%
Then
\begin{multline*}
c(\alpha ,\theta )(U)=(c(\alpha ,\theta )\circ d(\alpha ,\theta ))(A)=-\frac{%
1}{\sqrt{1-S}}M^{-1}.\left[
\begin{array}{cc}
1 & \sqrt{S} \\
\sqrt{S} & 1
\end{array}%
\right] .R^{5}= \\
=-\frac{1}{\sqrt{1-S}}\left[
\begin{array}{cc}
1 & \sqrt{S} \\
\sqrt{S} & 1
\end{array}%
\right] \left[
\begin{array}{cc}
e^{-i\alpha } & 0 \\
0 & e^{i\alpha }
\end{array}%
\right] \left[
\begin{array}{cc}
e^{-i5\theta } & 0 \\
0 & e^{i5\theta }
\end{array}%
\right] = \\
=\frac{1}{\sqrt{1-S}}\left[
\begin{array}{cc}
1 & \sqrt{S} \\
\sqrt{S} & 1
\end{array}%
\right] \left[
\begin{array}{cc}
e^{-i(5\theta +\alpha -\pi )} & 0 \\
0 & e^{i(5\theta +\alpha -\pi )}
\end{array}%
\right] = \\
=A\left[
\begin{array}{cc}
e^{-i(5\theta +\alpha -\pi )} & 0 \\
0 & e^{i(5\theta +\alpha -\pi )}
\end{array}%
\right] .
\end{multline*}%

Then, the point $c(U)$ is the point lying over $A$ at the $(5\theta +\alpha
-\pi )$ level. See Figure\ref{dalfateta}.

The result of the following identifications

\begin{itemize}
\item the two horizontal faces: ($d^{2}(A_{1}),d^{2}(A_{2}),d^{2}(C),d^{2}(B{%
2}),d^{2}(B_{1})$) (upper face) and ($A_{1},A_{2},C,B_{2},B_{1}$) (bottom
face) by $d^{2}$;

\item the two front faces ($O,A_{1},d^{2}(A_{1}),d^{2}(O)$) and ($%
O,B_{1},d^{2}(B_{1}),d^{2}(O)$) by $d$;

\item and the two back faces ($C,A_{2},d^{2}(A_{2}),d^{2}(C)$) and ($%
O,B_{2},d^{2}(B_{2}),d^{2}(C)$) by $c$,
\end{itemize}
produce a Seifert manifold bounded by a torus with a foliation induced by
the horizontal fibres of the side faces $%
A_{1},A_{2},d^{2}(A_{1}),d^{2}(A_{2})$ and $%
B_{1},B_{2},d^{2}(B_{1}),d^{2}(B_{2})$. Topologically this is $\mathbb{S}%
^{3} $ minus an open solid torus. To determine its Seifert structure
consider an oriented section disc $Q$ whose orientation, followed
by the orientation of the general fibre $H$, gives the positive orientation
of $\mathbb{S}^{3}$. See Figure \ref{dalfateta}.

The identification of the two front faces, depicted in Figure \ref{dfrente}
is equivalent to collapsing a curve homologous to $2Q+H$, producing a
exceptional fibre of type $(2/1)$.
\begin{figure}[h]
\begin{center}
\epsfig{file=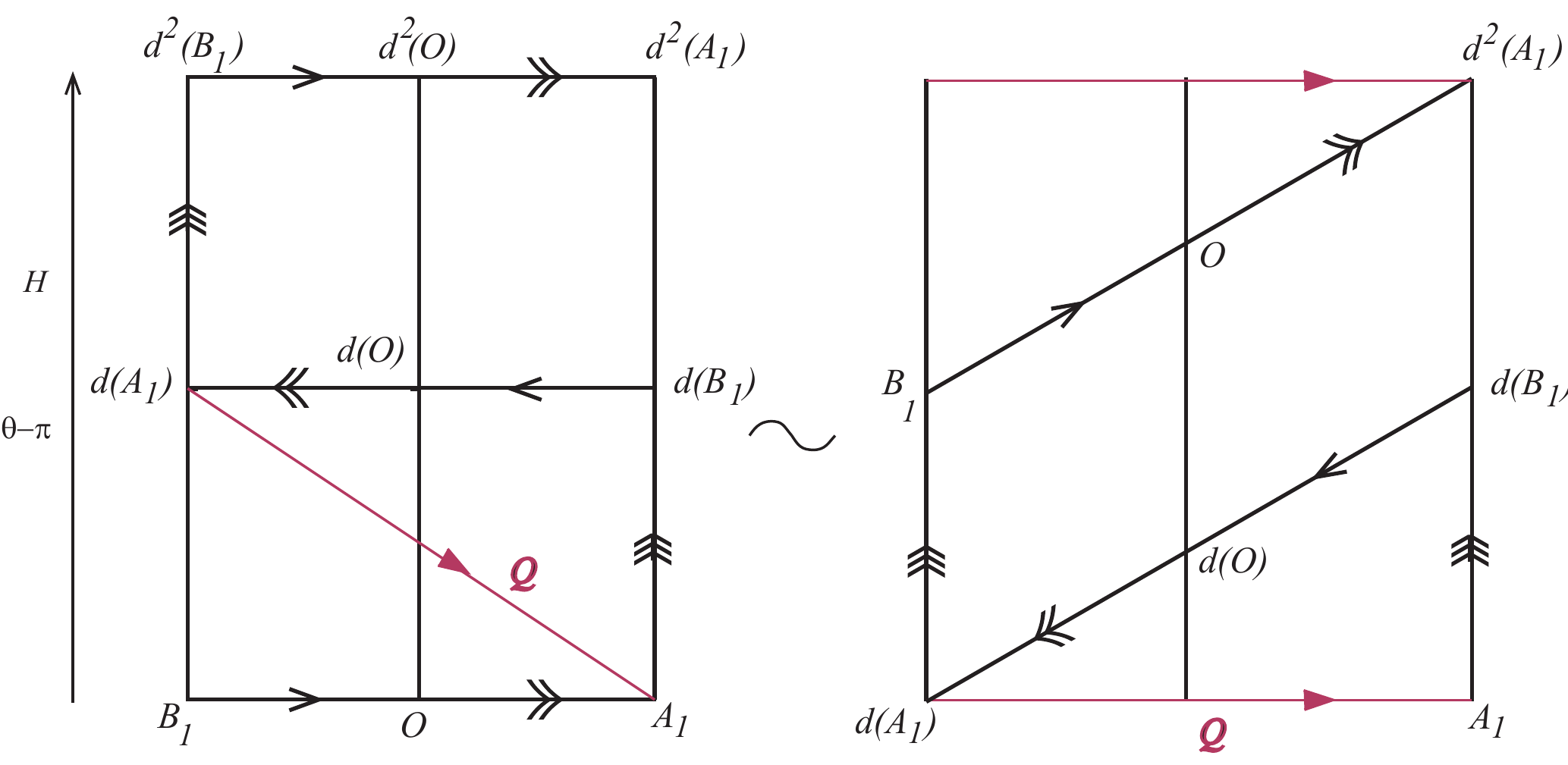,height=6cm}
\end{center}
\caption{The two front faces in $D(\protect\alpha ,\protect\theta )$.}
\label{dfrente}
\end{figure}

The identification of the two back faces, depicted in Figure \ref{ddetras}
is equivalent to collapsing a curve homologous to $3Q+H$, producing a
exceptional fibre of type $(3/1)$.

\begin{figure}[h]
\begin{center}
\epsfig{file=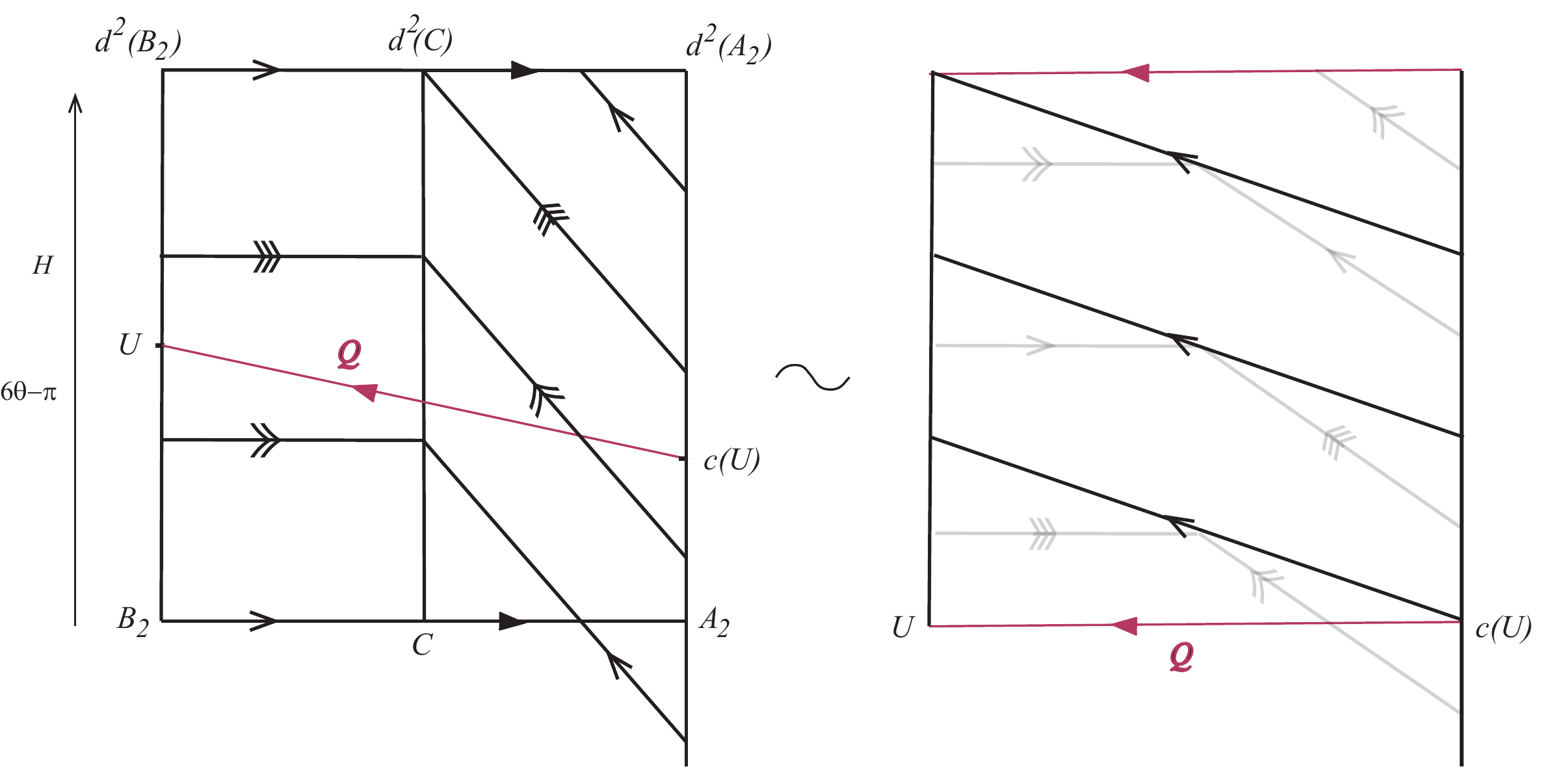,height=6cm}
\end{center}
\caption{The two back faces in $D(\protect\alpha ,\protect\theta)$.}
\label{ddetras}
\end{figure}

Therefore the Seifert structure after the identifications in $D(\alpha
,\theta )$ by $d^{2}$, $d$ and $c$ (before collapsing fibres in the torus
over $A\cup B$) is
\begin{equation}
(O\,o\,0|0;(2/1),(3/1))  \label{eseifert}
\end{equation}%
minus the neighbourhood of an ordinary fibre. The fibred torus which is the
boundary of the manifold after the above identifications is depicted in
Figure \ref{dtoro}. The slope of each fibre respect to the coordinates $Q$
and $H$ is $\lambda =\frac{\alpha +5\theta -\pi }{6\theta -\pi }$. If $%
\lambda $ is a rational number, the collapsing of the fibres in the torus is
equivalent to collapsing a curve homologous to $(6\theta -\pi
)Q-(\alpha +5\theta -\pi )H$.

\begin{figure}[h]
\begin{center}
\epsfig{file=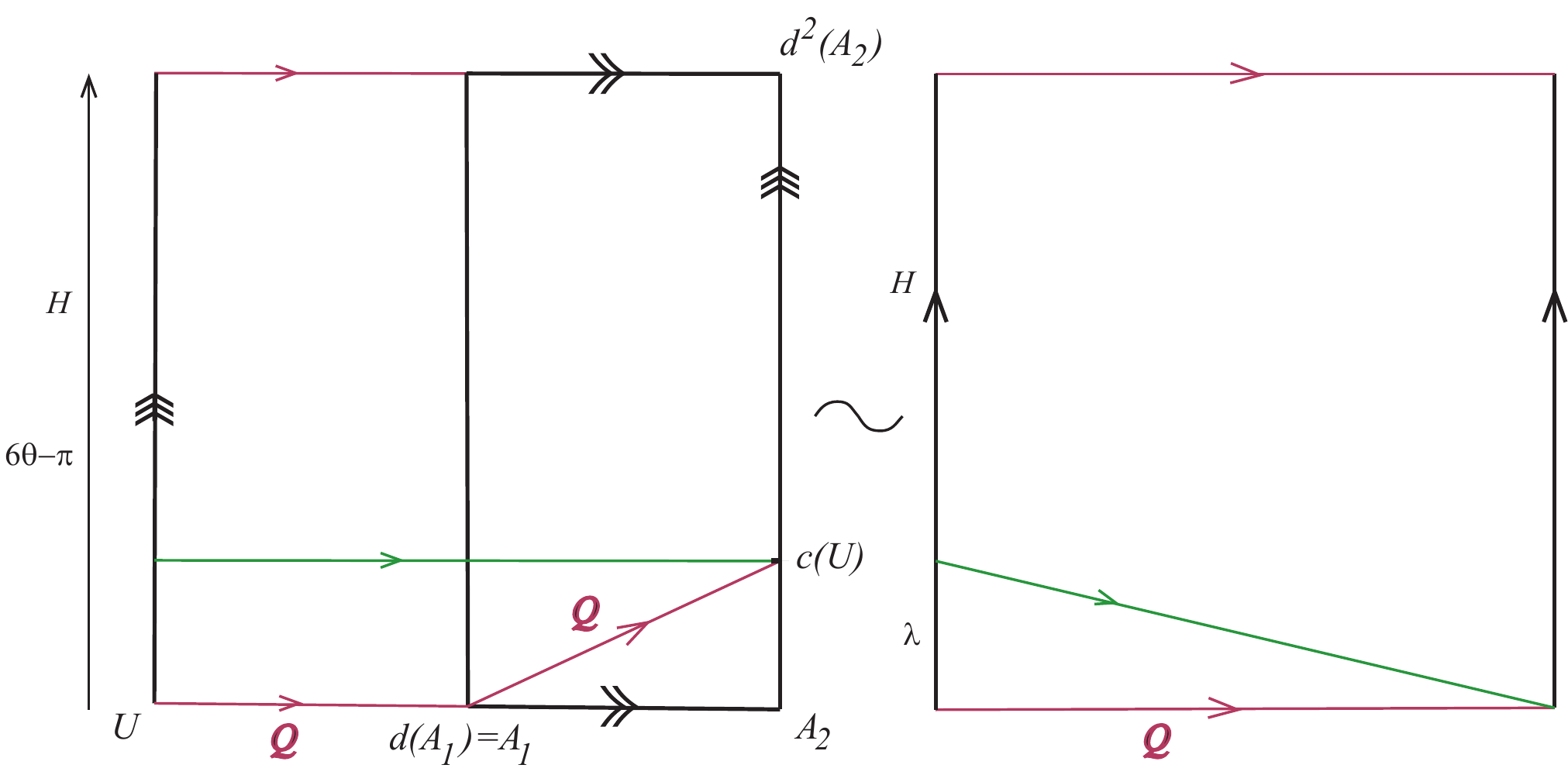,height=6cm}
\end{center}
\caption{The torus boundy.}
\label{dtoro}
\end{figure}

The resulting Seifert manifold is
\begin{eqnarray}
&&(O\,o\,0|0;(2/1),(3/1),((6\theta -\pi )/(\alpha +5\theta -\pi ))  \notag \\
&=&(O\,o\,0|-1;(2/1),(3/1),((6\theta -\pi )/(6\theta -\pi -\alpha -5\theta
+\pi )) \\
&=&(O\,o\,0|-1;(2/1),(3/1),((6\theta -\pi )/(\theta -\alpha ))  \notag
\end{eqnarray}%
This manifold is the result of $\left( \frac{6\theta -\pi }{\theta -\alpha }%
-6\right) $-surgery in the left-handed trefoil knot in $\mathbb{S}^{3}$.
This is because the surgery is always refered to the canonical longitude and
two parallel ordinary fibres in the Seifet structure $(O\,o%
\,0|0;(2/1),(3/1)) $ in $\mathbb{S}^{3}$ are two parallel left-handed
trefoil knots (can be considered in the same torus surface). Therefore, one
of them is a toroidal longitude $l_{t}$ for the other, and it is easy to
check (in Figure \ref{dlong}) that $l_{t}\,=\,l_{p}-3m\,=\,l_{c}-6m$, where $%
l_{p}$ is the pictorial longitude, $l_{c}$ is the canonical longitude and $m$
is the meridian of the left-handed trefoil knot.
\begin{figure}[h]
\begin{center}
\epsfig{file=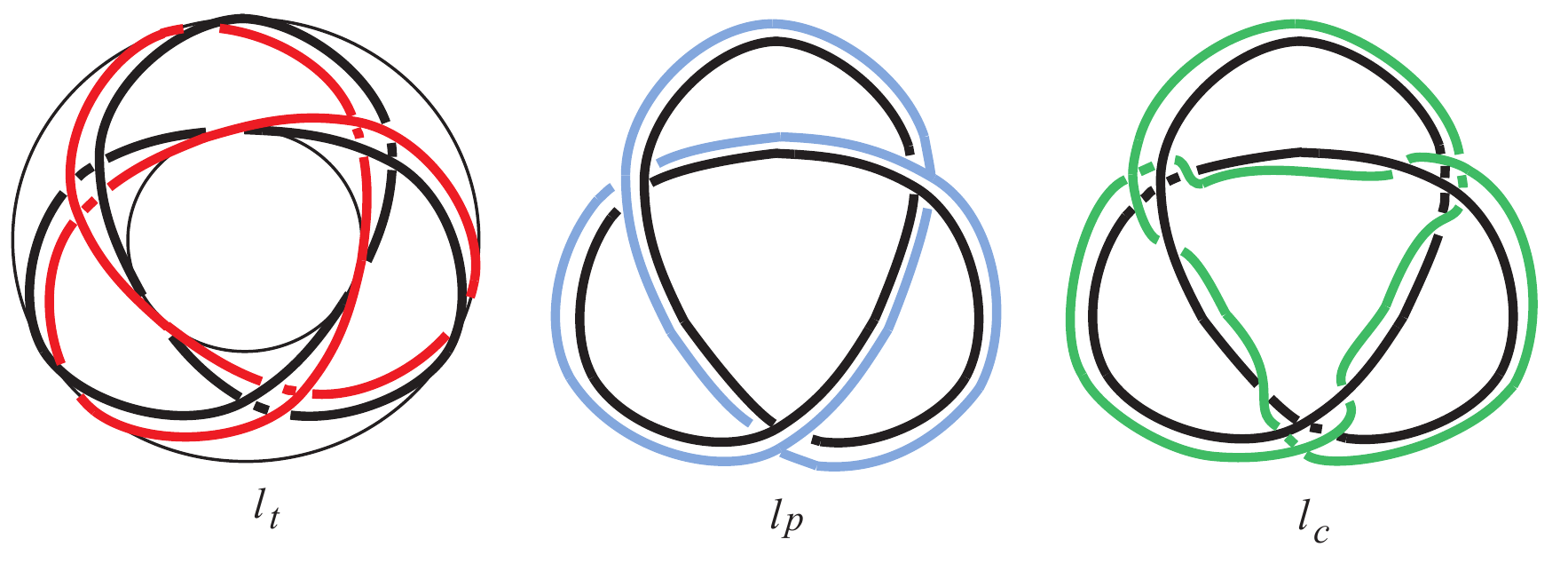,height=4cm}
\end{center}
\caption{The toroidal, pictorial and canonical longitudes.}
\label{dlong}
\end{figure}

\begin{theorem}
\label{tSeifertrebol} If $\frac{6\theta -\pi }{\theta -\alpha }$ is a
rational number, the quotient of $P$ by the group generated by $c(\alpha
,\theta )$ and $d(\alpha ,\theta )$ is the Seifert manifold
\begin{equation*}
(O\,o\,0|-1;(2/1),(3/1),((6\theta -\pi )/(\theta -\alpha ))
\end{equation*}%
which is the result of $\left( \frac{6\alpha -\pi }{\theta -\alpha }\right) $%
-surgery in the left-handed trefoil knot in $\mathbb{S}^{3}$. This manifold
has $\widetilde{S(2,\mathbb{R})}$ geometry for $0\leq \alpha <\frac{\pi }{6}$
and spherical geometry for $\frac{\pi }{6}<\alpha <\frac{5\pi }{6}$. The
conic angle $\beta $ is $2\alpha $ times the multiplicity $m$ of the
exceptional fibre, where $\frac{6\alpha -\pi }{\theta -\alpha }=\frac{m}{n}%
,\,\gcd (m,n)=1$. The singular points with angle $\beta $ form the core of
the surgery on the left-handed trefoil knot. This singular curve has length $%
\frac{6\theta -\pi }{m}$.
\end{theorem}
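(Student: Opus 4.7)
The plan is to assemble the explicit face identifications of the fundamental polyhedron $D(\alpha,\theta)$ established in the preceding discussion into a proof of each of the five assertions of the theorem.

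First, for the Seifert structure, I would combine the three gluings already described, namely the top-to-bottom identification by $d^2$, the front-face identification by $d$, and the back-face identification by $c$. By the analyses illustrated in Figures \ref{dfrente} and \ref{ddetras}, the front and back identifications collapse curves homologous to $2Q+H$ and $3Q+H$ respectively, yielding exceptional fibres of types $(2/1)$ and $(3/1)$. The remaining boundary torus carries a foliation of slope $(\alpha+5\theta-\pi)/(6\theta-\pi)$ with respect to the basis $(Q,H)$; when this slope is rational, collapsing the fibres produces a third exceptional fibre of type $((6\theta-\pi)/(\alpha+5\theta-\pi))$. A routine normalization of this pair, absorbing an integer multiple of the fibre into the base invariant $0$, converts it into the stated form $(O\,o\,0|-1;(2/1),(3/1),((6\theta-\pi)/(\theta-\alpha)))$.

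Second, for the surgery description, I would start from the fact that $(O\,o\,0|0;(2/1),(3/1))=\mathbb{S}^3$ and that an ordinary Seifert fibre in this structure is a left-handed trefoil $\mathbb{T}$. The slope $(6\theta-\pi)/(\theta-\alpha)$ is referred to the toroidal longitude $l_t$; applying the identity $l_t=l_c-6m$ established in Figure \ref{dlong} converts it to the canonical surgery coefficient $(6\theta-\pi)/(\theta-\alpha)-6=(6\alpha-\pi)/(\theta-\alpha)$, which is the surgery coefficient on $\mathbb{T}$.

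Third, for the geometry, cone angle, and length of the core, I would proceed as follows. The relation $S=(1-2\sin\alpha)/(1+2\sin\alpha)$ from equations (\ref{esalfa}) and (\ref{eesealfa}) is positive on $[0,\pi/6)$ and negative on $(\pi/6,5\pi/6)$, so by the identifications of $(X_{(S,S)},Q)$ recalled at the start of the section the quotient inherits $\widetilde{SL(2,\mathbb{R})}$ geometry in the first range and $S^3$ geometry in the second. The core of the surgery is the exceptional fibre associated to the third invariant, written in reduced form $m/n$ with $\gcd(m,n)=1$. Its cone angle is the product of the multiplicity $m$ with the downstairs rotation angle $2\alpha$ (the holonomy around the $r$-point of $(O,0|2,3,r)$ with $\alpha=\pi/r$), giving $\beta=2\alpha m$. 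Finally, since $d(\alpha,\theta)d(\alpha,\theta)$ acts on each fibre of $P$ by a translation of amplitude $6\theta-\pi$, an ordinary fibre has length $6\theta-\pi$ and the exceptional fibre of multiplicity $m$ has length $(6\theta-\pi)/m$.

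The main obstacle is bookkeeping: the orientations of $Q$ and $H$, the signs introduced by the normalization of Seifert invariants, and the passage between the toroidal and canonical longitudes must all be tracked consistently, since a single sign error would flip either the surgery coefficient or the cone angle. Once the conventions fixed earlier in the paper are respected, each statement in the theorem follows by direct computation from the identifications already in place, so the theorem is essentially a consolidation of the preceding analysis.
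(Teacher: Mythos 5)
Your proposal is correct and follows essentially the same route as the paper: the authors also treat the theorem as a consolidation of the preceding face-identification analysis of $D(\alpha,\theta)$, invoking Figure \ref{dtoro} for the cone angle $2\alpha m$ (each intersection of the surgery meridian with $H$ contributing a rotation angle $2\alpha$) and for the length $(6\theta-\pi)/m$, with the Seifert normalization and the passage $l_t=l_c-6m$ handled exactly as you describe. The only point worth noting is that the reduced fraction defining the multiplicity $m$ should be $\frac{6\theta-\pi}{\theta-\alpha}=\frac{m}{n}$ (as in the paper's proof and your reading), not $\frac{6\alpha-\pi}{\theta-\alpha}$ as misprinted in the theorem statement.
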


\begin{proof}
The first part of the theorem is already proved. Figure \ref{dtoro} shows
that if the slope $\frac{6\theta -\pi }{\theta -\alpha }$ is a rational
number, then the intersection of the surgery meridian with the fibre $H$ is
equal to the numerator of the reduced fraction $\frac{6\theta -\pi }{\theta
-\alpha }=\frac{m}{n}$, $\gcd (m,n)=1$. Each intersection point represent an
angle of $2\alpha $ because this is the angle of rotation around the points $%
A,\,B\in D_{S}$. Figure \ref{dtoro} shows also that its length is $\frac{%
6\theta -\pi }{m}$.
\end{proof}

\subsection{ Dehn surgery in the trefoil knot}

Consider the result of $p/q$ surgery in the left-handed trefoil knot $%
\mathbb{T}$, $\gcd (p,q)=1$. It is the Seifert manifold
\begin{equation}
(O\,o\,0|-1;(2/1),(3/1),(6+p/q))  \label{econemanifoldpq}
\end{equation}

Consider the 3-conemanifold $(\mathbb{T}_{p/q},r)$ whose underlying space is
the Seifert manifold in (\ref{econemanifoldpq}) with singular set the core
of the surgery (or equivalently, the exceptional fibre $(6+p/q)$) and with
valuation $r$. Let $\beta =2\pi /r$. Next we study the geometry possessed by
this conemanifold.

Suppose
\begin{equation*}
\left\{
\begin{array}{l}
\beta =\frac{2\pi }{r}=2\alpha (p+6q) \\
\\
p/q=\frac{6\alpha -\pi }{\theta -\alpha },\,q\neq 0;\quad p/q=\infty
,\,\alpha =\theta%
\end{array}%
\right. , \\
\end{equation*}
then
\begin{equation}  \label{ealfayteta}
\left\{
\begin{array}{l}
\alpha =\frac{\pi}{r(p+6q)}=\frac{\beta}{2(p+6q)} \\
\\
\theta=\alpha+\frac{q}{p}(6\alpha -\pi )=\pi \left(\frac{1 }{pr}-\frac{q}{p}%
\right)=\frac{\beta -2\pi q}{2p}%
\end{array}
\;\quad \Longrightarrow \quad p\neq 0. \right.
\end{equation}

If $p\neq 0$, by Theorem \ref{tSeifertrebol}, the conemanifold $(\mathbb{T}_{p/q},r)$ has spherical geometry for
\begin{equation*}
\frac{\pi}{6}<\alpha <\frac{5\pi}{6}\Longleftrightarrow \frac{6}{5|p+6q|}%
<|r|<\frac{6}{|p+6q|}\Longleftrightarrow \frac{\pi}{3}|p+6q|<|\beta |<\frac{%
5\pi}{3}|p+6q|
\end{equation*}
and $\widetilde{SL(2,\mathbb{R})}$ geometry for
\begin{equation*}
0\leq \alpha <\frac{\pi}{6}\Longleftrightarrow \frac{6}{|p+6q|}<|r|\leq
\infty \Longleftrightarrow 0\leq |\beta |<\frac{\pi}{3}|p+6q|.
\end{equation*}

For the limit case $\alpha \to \frac{\pi}{6}$ the generators $a_{t}$ and $%
b_{t}$ in (\ref{eatbt}) are well defined because $\theta=\alpha+\frac{q}{p}%
(6\alpha -\pi )$.
\begin{eqnarray}  \label{eaqpbqp}
a_{q/p} =& \left[
\begin{array}{cccc}
\frac{1}{2} & -\frac{\sqrt{3}}{2} & 0 & \frac{1}{2} \\
\frac{\sqrt{3}}{2} & \frac{1}{2} & 0 & -\frac{\sqrt{3}}{2} \\
\frac{\sqrt{3}}{2} & -\frac{1}{2} & 1 & -\frac{1}{2} \sqrt{3} (8 \frac{q}{p}%
+1) \\
0 & 0 & 0 & 1%
\end{array}
\right]  \notag \\
\\
b_{q/p} =& \left[
\begin{array}{cccc}
\frac{1}{2} & -\frac{\sqrt{3}}{2} & 0 & -\frac{1}{2} \\
\frac{\sqrt{3}}{2} & \frac{1}{2} & 0 & \frac{\sqrt{3}}{2} \\
-\frac{\sqrt{3}}{2} & \frac{1}{2} & 1 & -\frac{1}{2} \sqrt{3} (8 \frac{q}{p}%
+1) \\
0 & 0 & 0 & 1%
\end{array}
\right]  \notag
\end{eqnarray}
\begin{figure}[h]
\begin{center}
\epsfig{file=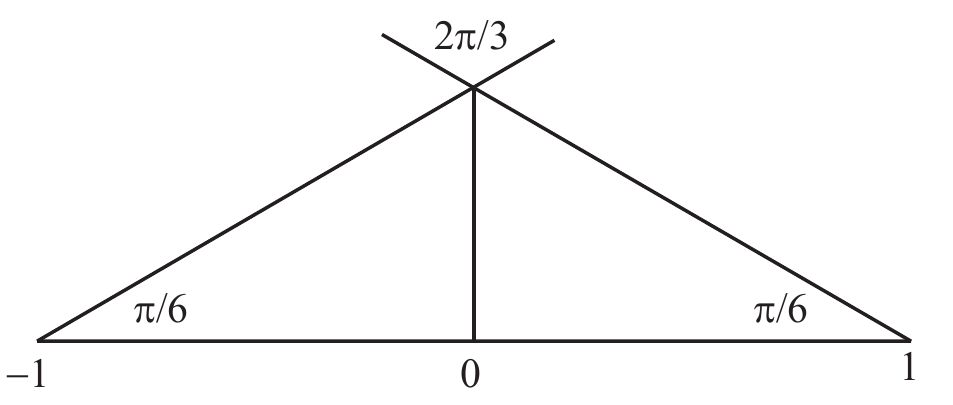,height=2.5cm}
\end{center}
\caption{The Euclidean triangle $\Delta$.}
\label{ftrianeu}
\end{figure}
\begin{figure}[h]
\begin{center}
\epsfig{file=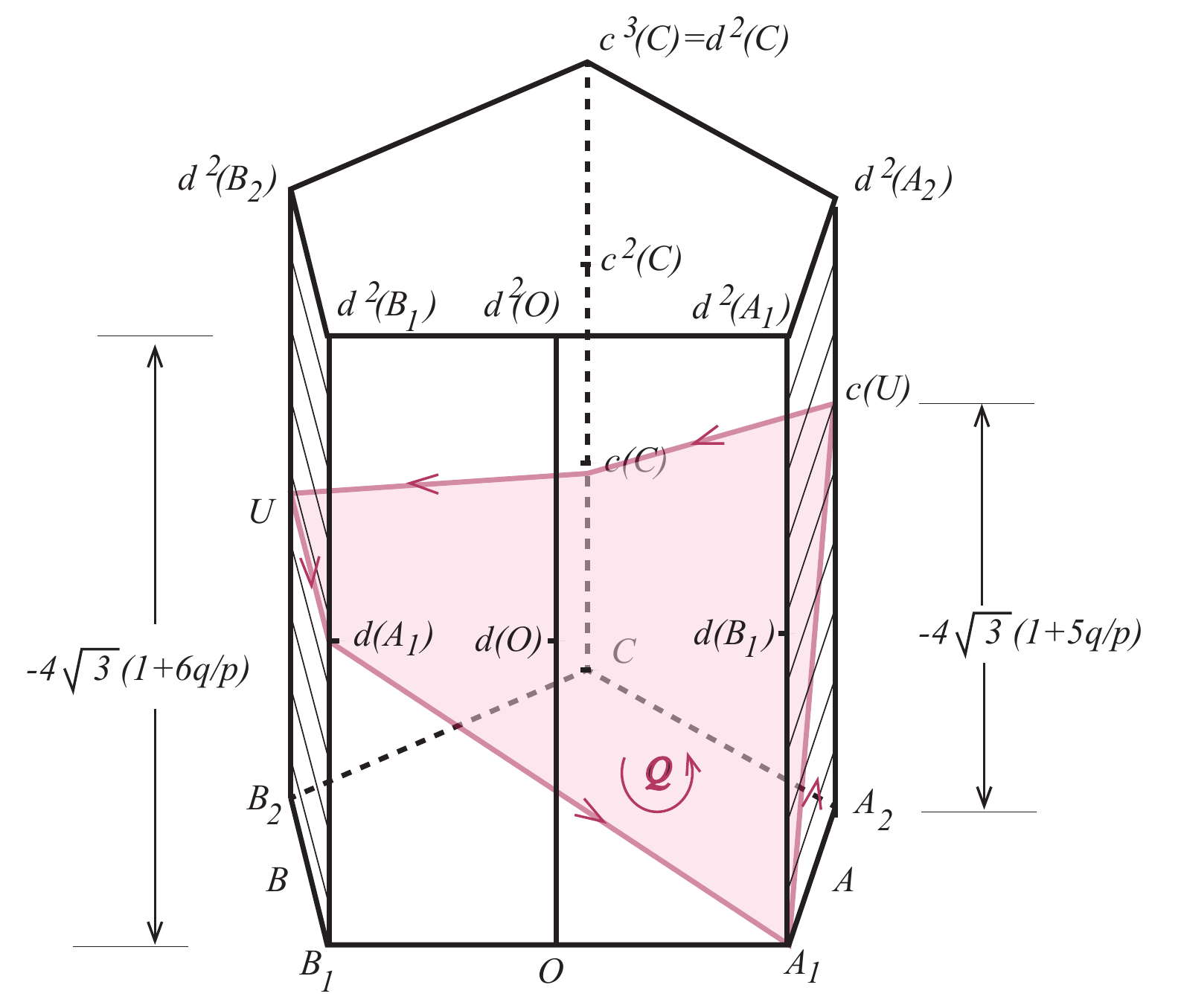,height=8cm}
\end{center}
\caption{The fundamental domain in the Nil geometry  $(X_{1},Q)$.}
\label{dalfatetapq}
\end{figure}

We can obtain explicitly the Nil geometry $(X_{1},Q)$  in $(\mathbb{T}_{p/q},%
\frac{6}{p+6q})$. Let $\Delta $ be the Euclidean triangle of Figure \ref%
{ftrianeu} . Because
\begin{equation*}
d_{q/p}d_{q/p}=\left[
\begin{array}{cccc}
1 & 0 & 0 & 0 \\
0 & 1 & 0 & 0 \\
0 & 0 & 1 & -4\sqrt{3}(6\frac{q}{p}+1) \\
0 & 0 & 0 & 1%
\end{array}%
\right]
\end{equation*}%
we can take the right prism with base $\Delta $ minus a small neighborhood
of vertex $1$ and $-1$ and height $-4\sqrt{3}(6\frac{q}{p}+1)$ (Figure \ref%
{dalfatetapq}) as fundamental domain for the action of the group of
isometries in the Nil geometry $(X_{1},Q)$, generated by $a_{q/p}$ and $b_{q/p}$ (or $c_{q/p}$
and $d_{q/p}$). The element $d_{q/p}^{2}$ identifies the two horizontal
faces (bases) by translation. The element
\begin{equation*}
d_{q/p}=\left[
\begin{array}{cccc}
-1 & 0 & 0 & 0 \\
0 & -1 & 0 & 0 \\
0 & 0 & 1 & -2\sqrt{3}(6\frac{q}{p}+1) \\
0 & 0 & 0 & 1%
\end{array}%
\right]
\end{equation*}%
identifies the two halfs of the front face producing a exceptional fibre $%
(2/1)$ as in the general case. The element
\begin{equation*}
c_{q/p}=\left[
\begin{array}{cccc}
-\frac{1}{2} & -\frac{\sqrt{3}}{2} & 0 & \frac{1}{2} \\
\frac{\sqrt{3}}{2} & -\frac{1}{2} & 0 & \frac{\sqrt{3}}{2} \\
\frac{\sqrt{3}}{2} & \frac{1}{2} & 1 & -\frac{1}{2}\sqrt{3}(16\frac{q}{p}+3)
\\
0 & 0 & 0 & 1%
\end{array}%
\right]
\end{equation*}%
\begin{figure}[h]
\begin{center}
\epsfig{file=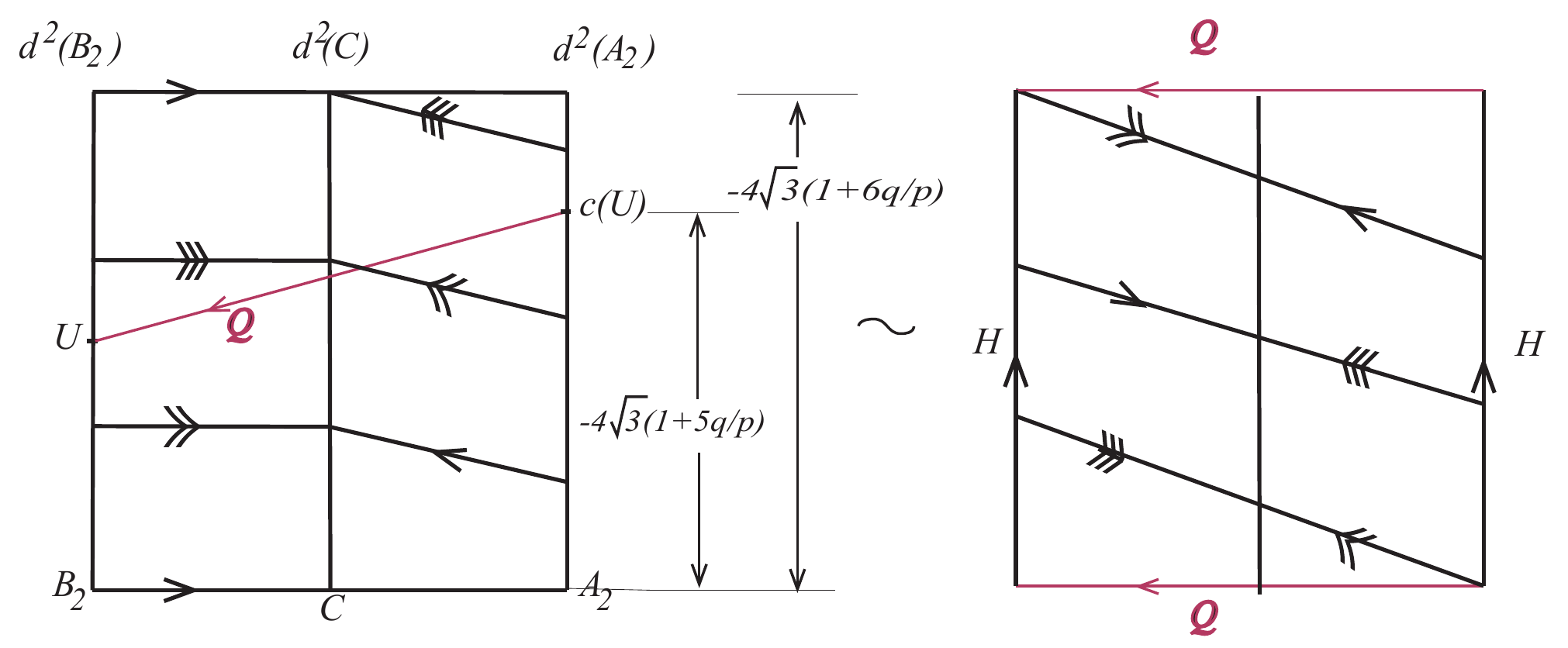,height=5cm}
\end{center}
\caption{The exceptional fibre (3/1).}
\label{detraspq}
\end{figure}
identifies the two back faces as it is shown in Figure \ref{detraspq},
because
\begin{equation*}
(c_{q/p}\circ \, b_{q/p})(A)=c_{q/p}\left(b_{q/p}\left(
\begin{array}{c}
1 \\
0 \\
0 \\
1 \\
\end{array}%
\right) \right)=c_{q/p}\left(
\begin{array}{c}
-1 \\
0 \\
-2\sqrt{3}(6\frac{q}{p}+1) \\
1 \\
\end{array}%
\right) =\left(
\begin{array}{c}
1 \\
0 \\
-4\sqrt{3}(5\frac{q}{p}+1) \\
1 \\
\end{array}%
\right)
\end{equation*}%
producing a $(3/1)$ exceptional fibre.
\begin{figure}[h]
\begin{center}
\epsfig{file=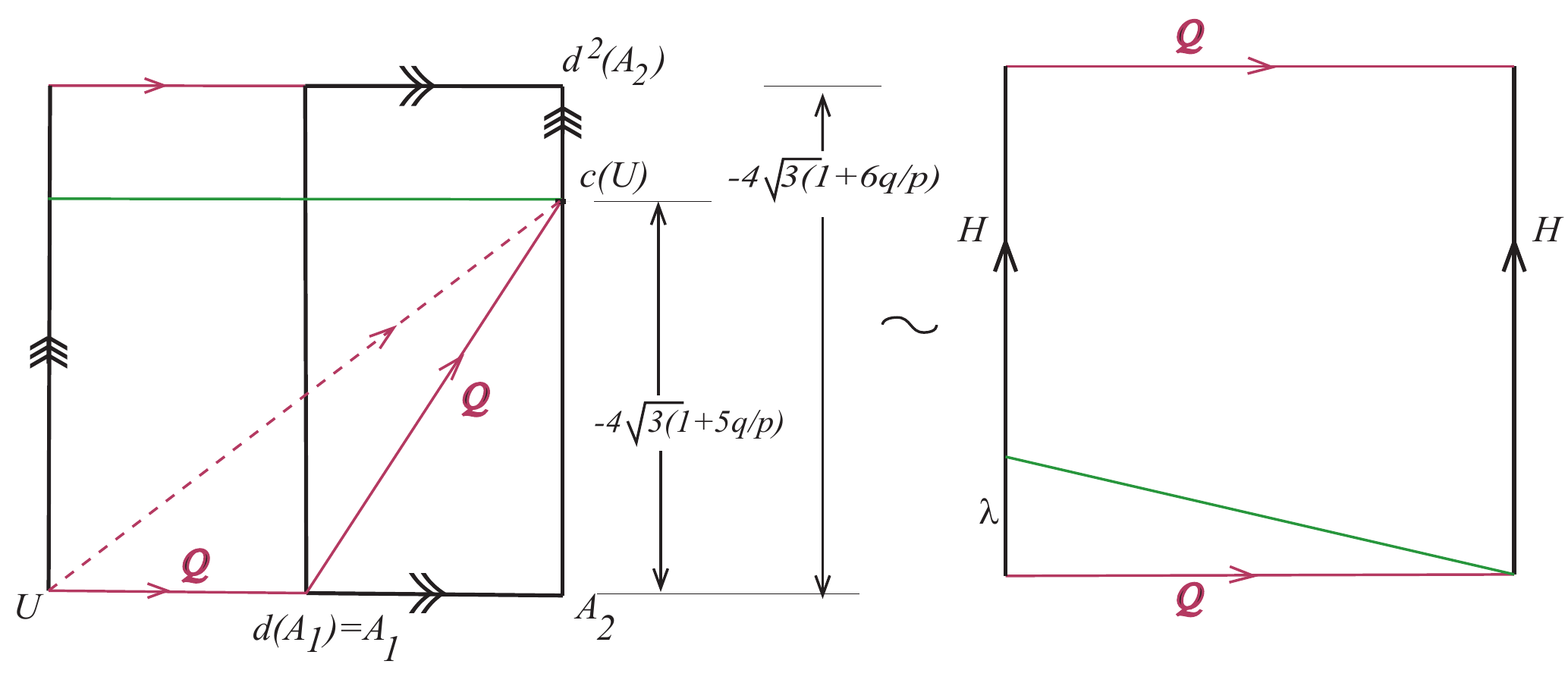,height=5cm}
\end{center}
\caption{The exceptional fibre $(\frac{p}{q}+6)$.}
\label{dtoropq}
\end{figure}

The exceptional fibre coming from the collapsing of the leaves of the
foliation in the boundary torus, depicted in Figure \ref{dtoropq}, is
\begin{equation*}
\frac{4\sqrt{3}(1+6\frac{q}{p})}{-4\sqrt{3}(1+5\frac{q}{p})}=\frac{1+6\frac{q%
}{p}}{-(1+5\frac{q}{p})}.
\end{equation*}

The Seifert manifold is
\begin{equation}
\begin{array}{l}
(O\,o\,0|0;(2/1),(3/1),((1+6\frac{q}{p})/(-1-5\frac{q}{p})))= \\
(O\,o\,0|-1;(2/1),(3/1),((1+6\frac{q}{p})/(1+6\frac{q}{p}-1-5\frac{q}{p})))=
\\
(O\,o\,0|-1;(2/1),(3/1),(6+\frac{q}{p})),%
\end{array}
\label{eseifertnil}
\end{equation}%
which coincides with (\ref{econemanifoldpq}).

The length $\lambda $ of the singular curve (core of the surgery) is given
in Theorem \ref{tSeifertrebol}, namely $\lambda =\frac{6\theta -\pi }{m}$,
where $m$ is the multiplicity of the exceptional fibre $\frac{p}{q}+6=\frac{%
p+6q}{q}=\frac{m}{n}$, $\gcd (m,n)=1$. Therefore, by (\ref{ealfayteta})
\begin{equation*}
\lambda =\frac{6\theta -\pi }{p+6q}=\frac{6\pi \left( \frac{1}{pr}-\frac{q}{p%
}\right) -\pi }{p+6q}=\frac{6\pi \left( 1-qr\right) -pr\pi }{pr(p+6q)}=\frac{%
6\pi }{pr(p+6q)}-\frac{\pi }{p}.
\end{equation*}

Summarizing:

\begin{theorem}
The conemanifold $(\mathbb{T}_{p/q},r)$, $p\neq 0$, has spherical geometry
for
\begin{equation*}
\frac{6}{5|p+6q|}<|r|<\frac{6}{|p+6q|},
\end{equation*}
Nil $X_{1}$ geometry for $|r|=\frac{6}{|p+6q|}$ and $\widetilde{SL(2,\mathbb{R})}
$ geometry for
\begin{equation*}
\frac{6}{|p+6q|}<|r|\leq \infty
\end{equation*}
The holonomy is generated by $a(\alpha ,\theta )$ (\ref{eaAij}) and $%
b(\alpha ,\theta )$ (\ref{ebBij}) where
\begin{equation*}
\alpha =\frac{\pi}{r(p+6q)}\qquad \theta=\pi \left(\frac{1 }{pr}-\frac{q}{p}%
\right)
\end{equation*}
when $|r|\neq \frac{6}{|p+6q|}$ and by $a_{q/p}$ and $b_{q/p}$ (\ref{eaqpbqp}%
) if $|r|= \frac{6}{|p+6q|}$. The length of the singular knot is
\begin{equation}  \label{elongitud}
\lambda = \frac{6\pi}{pr(p+6q)}-\frac{\pi}{p}
\end{equation}
\qed
\end{theorem}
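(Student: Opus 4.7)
The plan is to reduce this statement to Theorem \ref{tSeifertrebol} by inverting the correspondence between the auxiliary parameters $(\alpha,\theta)$ used there and the surgery/valuation data $(p/q,r)$ of the present conemanifold. To force the Seifert manifold produced in Theorem \ref{tSeifertrebol} to match the target $(O\,o\,0|-1;(2/1),(3/1),(6+p/q))$ and the cone angle to equal $2\pi/r$, one imposes
\[
\frac{p}{q}=\frac{6\alpha-\pi}{\theta-\alpha},\qquad \frac{2\pi}{r}=2\alpha(p+6q),
\]
which solve uniquely, as in (\ref{ealfayteta}), to give $\alpha=\pi/(r(p+6q))$ and $\theta=\pi(1/(pr)-q/p)$. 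This identifies which holonomy representation from the previous section realizes $(\mathbb{T}_{p/q},r)$.

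With $\alpha$ thus determined, the geometric type follows from the interval conditions of Theorem \ref{tSeifertrebol}: $\widetilde{SL(2,\mathbb{R})}$-geometry for $0\leq\alpha<\pi/6$ and spherical geometry for $\pi/6<\alpha<5\pi/6$. Since $\alpha$ is inversely proportional to $|r(p+6q)|$, these intervals translate immediately into the stated ranges for $|r|$, and the holonomy generators $a(\alpha,\theta), b(\alpha,\theta)$ from (\ref{eaAij})--(\ref{ebBij}) are obtained by direct substitution of the expressions for $\alpha,\theta$.

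The critical case $|r|=6/|p+6q|$ corresponds to $\alpha=\pi/6$ (equivalently $S=0$), at which the hyperbolic/spherical construction underlying Theorem \ref{tSeifertrebol} degenerates and must be replaced by the Nil construction carried out in the paragraphs preceding the theorem statement. There one approaches the limit along the line $\theta=\alpha+(q/p)(6\alpha-\pi)$, obtaining the well-defined generators $a_{q/p},b_{q/p}$ of (\ref{eaqpbqp}); the explicit fundamental domain shown in Figure \ref{dalfatetapq}, together with the identifications produced by $d_{q/p}^2$, $d_{q/p}$ and $c_{q/p}$, verifies directly that the quotient is the same Seifert manifold $(O\,o\,0|-1;(2/1),(3/1),(6+p/q))$, now carrying Nil geometry. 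This is the only step that is not a mechanical corollary of Theorem \ref{tSeifertrebol}, and it is the main obstacle; fortunately the computation has already been done above, and only needs to be invoked.

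Finally, the length formula follows from the identity $\lambda=(6\theta-\pi)/m$ of Theorem \ref{tSeifertrebol}, where $m=p+6q$ is the multiplicity of the exceptional fibre $(p+6q,q)$ in reduced form (using $\gcd(p,q)=1$). Substituting $\theta=\pi(1/(pr)-q/p)$ and simplifying gives
\[
\lambda=\frac{6\pi(1/(pr)-q/p)-\pi}{p+6q}=\frac{6\pi}{pr(p+6q)}-\frac{\pi}{p},
\]
which is (\ref{elongitud}). Beyond this, the proof is just bookkeeping: confirming the two parametrizations $(\alpha,\theta)$ and $(p/q,r)$ are consistently aligned across the three geometric regimes.
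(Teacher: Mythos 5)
Your proposal is correct and follows essentially the same route as the paper: the theorem there is explicitly a summary of the preceding subsection, which inverts the $(\alpha,\theta)\leftrightarrow(p/q,r)$ correspondence of (\ref{ealfayteta}), applies Theorem \ref{tSeifertrebol} for the non-degenerate cases, treats $\alpha=\pi/6$ by the explicit Nil fundamental-domain construction with generators $a_{q/p},b_{q/p}$, and obtains the length by substituting into $\lambda=(6\theta-\pi)/m$ with $m=p+6q$.
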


The following result adresses the remaining case $p=0$.

\begin{theorem}
The conemanifold $(\mathbb{T}_{0},r)$, has Euclidean geometry for $r=1$; $%
H^{2}\times \mathbb{R}$ geometry for
\begin{equation*}
1<|r|\leq \infty ;
\end{equation*}%
and $S^{2}\times \mathbb{R}$ geometry for
\begin{equation*}
\frac{1}{5}<|r|<1.
\end{equation*}
\end{theorem}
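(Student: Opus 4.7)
The crux is that when $p=0$ (so $q=\pm 1$) equation~(\ref{econemanifoldpq}) reduces to
\[
\mathbb{T}_0=(Oo0|-1;(2,1),(3,1),(6,1)),
\]
whose rational Seifert Euler number is
\[
e=-1+\tfrac{1}{2}+\tfrac{1}{3}+\tfrac{1}{6}=0.
\]
This vanishing is precisely the Seifert-theoretic signature of a \emph{product} geometry $X\times\mathbb{R}$, and it explains both why the formulas~(\ref{ealfayteta}) degenerate at $p=0$ (they carry $p$ in the denominator) and why a different, untwisted construction is needed here.

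The plan is to project the 3-conemanifold structure of $(\mathbb{T}_0,r)$ onto the base 2-orbifold of its Seifert fibration and reduce to the 2-dimensional classification already at our disposal. Using the angle bookkeeping of Theorem~\ref{tSeifertrebol}, the conic angle $\beta=2\pi/r$ along a singular fibre of multiplicity $m$ corresponds to a base cone angle $2\alpha$ with $\beta=2\alpha\cdot m$; for $m=6$ this gives $\alpha=\pi/(6r)$. Hence the base 2-conemanifold is $(O,0|2,3,6r)$, whose Euler characteristic by~(\ref{s23r}) equals $\chi^c=-\tfrac{1}{6}+\tfrac{1}{6r}$. Combining Proposition~\ref{xgeometry} with the explicit ranges established for the geometry of $(O,0|2,3,r')$ in Section~2 (Euclidean iff $r'=6$, spherical iff $6/5<r'<6$, hyperbolic iff $r'>6$), the substitution $r'=6r$ yields a spherical base for $\tfrac{1}{5}<|r|<1$, a Euclidean base for $r=1$, and a hyperbolic base (with a cusp at $|r|=\infty$, where $\beta\to 0$) for $1<|r|\le\infty$, matching the three ranges in the theorem verbatim.

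To realize each resulting 3-dimensional geometry I would construct the holonomy of $G(\mathbb{T})$ directly in $\mathrm{Iso}(X\times\mathbb{R})\cong \mathrm{Iso}(X)\times\mathrm{Iso}(\mathbb{R})$ by pairing the 2-dimensional holonomy of Proposition~\ref{paybend} (for $\alpha=\pi/(6r)$) with pure translations along the fibre direction, then assemble a fundamental domain of the form $\Delta\times[0,h]$ where $\Delta$ is the $(2,3,6r)$ triangle in the appropriate base geometry, carrying out face identifications mirroring those of Figures~\ref{dalfateta}--\ref{dtoro}. The main obstacle will be verifying that these identifications close up consistently in the product geometry: the vanishing $e=0$ is precisely what permits the vertical translations to be chosen independently of the planar rotations, so that no Nil-type shear is forced between the base and the fibre. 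The resulting quotient is the Seifert manifold $(Oo0|-1;(2,1),(3,1),(6,1))$ endowed with the product geometric structure and a cone-singularity of angle $2\pi/r$ along the core of the surgery, as required.
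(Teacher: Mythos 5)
Your argument is correct in outline, but it takes a genuinely different route from the paper. The paper does not touch the Seifert fibration over $S^{2}(2,3,6r)$ at all in this case: it uses the fact that the trefoil is a fibred knot, so that $\mathbb{T}_{0}$ is the mapping torus of an order-$6$ periodic homeomorphism $h'$ of the closed torus $F_{1}$ (the capped-off Seifert surface), and it exhibits the geometric structure directly as a hexagonal right prism --- a fundamental domain for $F_{1}$ carrying one cone point of angle $2\pi/r$ at its centre, cut into six isosceles triangles with angles $\pi/3,\pi/3,2\alpha$ where $2\alpha=2\pi/(6r)$ --- lying in $E^{2}\times\mathbb{R}$, $H^{2}\times\mathbb{R}$ or $S^{2}\times\mathbb{R}$ according to the same trichotomy on $\alpha$ used in Section~2; the product structure is immediate because the monodromy is realized as an isometry of the two-dimensional fibre. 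You instead stay with the base orbifold $(O,0|2,3,6r)$ of the Seifert fibration, invoke the vanishing of the rational Euler number as the reason a product model is forced, and propose to rerun the triangular-prism construction of Section~4 with $X_{(S,S)}$ replaced by $X\times\mathbb{R}$ and the right factor $R(\theta)$ replaced by a pure fibre translation. This does work: the planar holonomy of Proposition~\ref{paybend} factors through $|c,d:c^{3}=d^{2}=1|$, where $aba=bab=d$ already holds, so pairing it with a common translation length $t_{0}$ gives a representation of $G(\mathbb{T})$, and the level bookkeeping ($d$ raises the level by $3t_{0}$, $c$ by $2t_{0}$, $d^{2}$ by $6t_{0}$) reproduces exactly the boundary slope $6t_{0}/(6t_{0}-5t_{0})=6/1$, i.e.\ the $(6,1)$ exceptional fibre of the $0$-surgery, with cone angle $12\alpha=2\pi/r$ over the vertex. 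The trade-off: the paper's route is self-contained and short, needing neither a new model space nor the Euler-number criterion (which the paper nowhere develops, so you are importing an external result, standard though it is), and the hexagon makes the order-$6$ symmetry visible; your route is more uniform with the $p\neq 0$ cases, explains structurally why $p=0$ is the one slope excluded from equations~(\ref{ealfayteta}), but leaves the fundamental-domain verification as a sketch that would still have to be carried out.
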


\begin{proof}
The trefoil knot is a fibred knot. The complement $C(\mathbb{T})$ of the trefoil knot
is obtained from $F_{1,1}\times \lbrack 0,1]$, where the punctured torus $%
F_{1,1}$ is a Seifert surface of the knot, by the identification $%
(x,0)=((h(x),1)$, where $h:F_{1,1}\longrightarrow F_{1,1}$ is an orientation
preserving cyclic homeomorphism of order 6. The $0$-surgery on the knot
consists in pasting a solid torus to $C(\mathbb{T})$ so that the boundary of the
meridian disc in the torus is identified with the boundary of the Seifert
surface. Therefore $\mathbb{T}_{0}=F_{1}\times \lbrack
0,1]/(x,0)=((h^{\prime }(x),1)$, where $h^{\prime }:F_{1}\longrightarrow
F_{1}$, extension of $h$, is a orientation preserving cyclic homeomorphism
of order $6$.
\begin{figure}[h]
\begin{center}
\epsfig{file=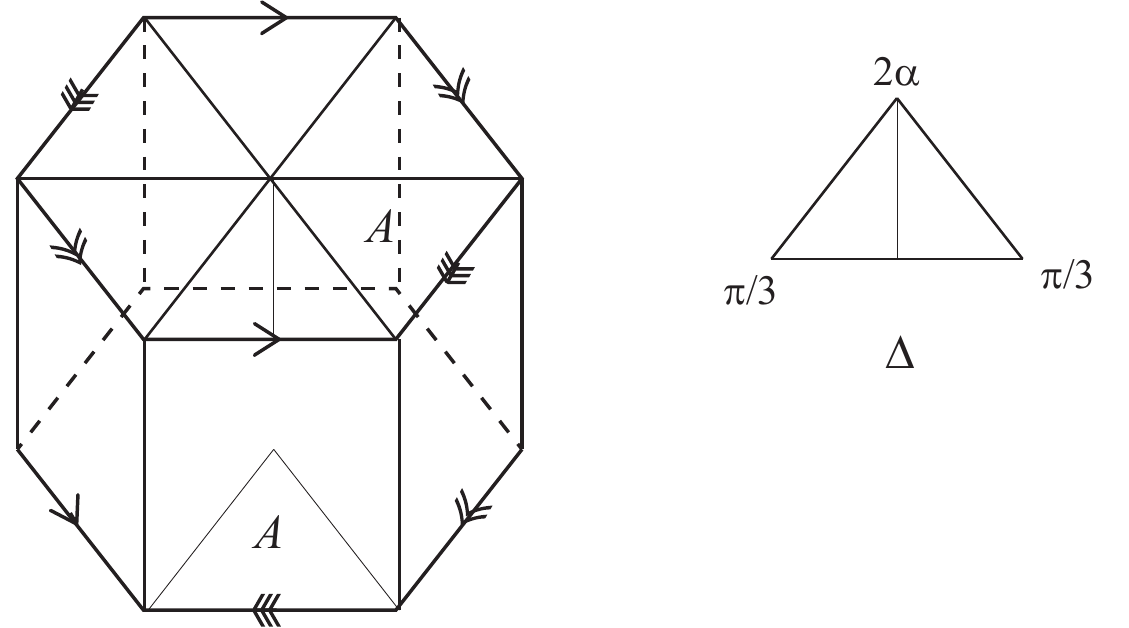,height=6cm}
\end{center}
\caption{The manifold $\mathbb{T}_{0}$.}
\label{fcirugia0}
\end{figure}

Figure \ref{fcirugia0} shows this manifold by identifications in a hexagonal
right prism, where the base is the union of $6$ isosceles triangles $\Delta $
with angles $\pi /3$, $\pi /3$ and $2\alpha =2\pi /6r$. If the angle $\alpha
$ is equal to $\pi /6$ ($r=1$) the hexagon lies on the Euclidean plane $E^{2}
$ and the prism lies on $E^{2}\times \mathbb{R}$; if $\alpha <\pi /6$ ($%
1<|r|\leq \infty $) the hexagon is a 2-conemanifold in
the hyperbolic plane $H^{2}$; and finally if $\pi /6<\alpha <5\pi
/6$ ($\frac{1}{5}<|r|<1$) the hexagon is a 2-conemanifold in the 2-sphere $S^{2}$.
\end{proof}

One way to resume the different geometries in $(\mathbb{T}_{p/q},r)$,
according to the different values of $p/q$ and $r$ is by creating a plot in $%
\mathbb{R}^{2}$ as follows.

\begin{definition}
The \emph{lower limit} of sphericity $l_{i}$ of the conemanifold $\mathbb{T}%
_{p/q}$ is equal to $\frac{6}{p+6q}$, and the \emph{upper limit} of
sphericity $l_{s}$ is equal to $\frac{6}{5(p+6q)}$.
\end{definition}

\begin{figure}[h]
\begin{center}
\epsfig{file=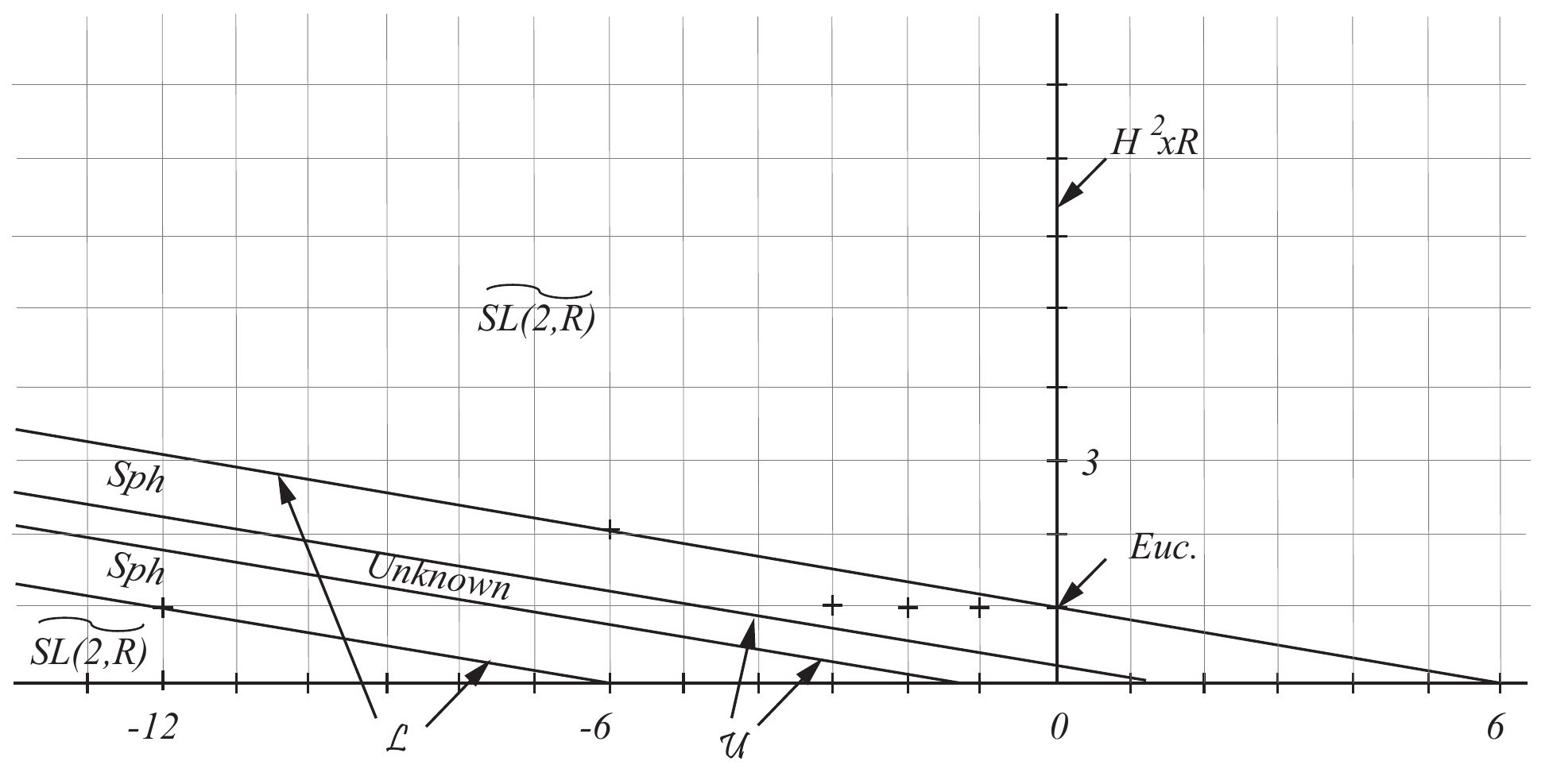,height=6cm}
\end{center}
\caption{The plot $\mathcal{P}_{1}$.}
\label{fplot1}
\end{figure}

In the plot $\mathcal{P}_{1}$ of Figure \ref{fplot1} the point $(rp,rq)$, with
integer coordinates, where $gcd(p,q)=1$ and $q>0$, $r>0$ represents the surgery $p/q$ in the left-handed trefoil
knot with conic angle $\beta =2\pi /r$, that is  the cone manifold $(\mathbb{%
T}_{p/q},r)$.

Let $\varepsilon$ be the sign of $p+6q$. The set of lower limits is
\begin{equation*}
\mathcal{L}=\left\{ (x,y)\in \mathbb{R}^{2}:\;x=\frac{\varepsilon 6p}{p+6q}%
,\;y=\frac{\varepsilon 6q}{p+6q}\right\}
\end{equation*}
and the set of upper limits is
\begin{equation*}
\mathcal{U}=\left\{ (x,y)\in \mathbb{R}^{2}:\;x=\frac{\varepsilon 6p}{5(p+6q)%
},\;y=\frac{\varepsilon 6q}{5(p+6q)}\right\}.
\end{equation*}
Both sets constitute a pair of straight lines depicted in Figure \ref{fplot1} and
they divide the plane in regions with different geometries.
\begin{equation*}
\begin{array}{c}
\mathcal{L}= \left\{ (x,y)\in \mathbb{R}^{2}: x+6y=6\varepsilon \right\} \\
\mathcal{U}= \left\{ (x,y)\in \mathbb{R}^{2}: x+6y=\frac{6}{5}\varepsilon
\right\}%
\end{array}%
\end{equation*}

Points in $\mathcal{L}$ represent conemanifolds with Nil $X_{1}$ geometry. We do
not know if the points in the region limited by the two straight lines in $%
\mathcal{U}$, including both lines, represent any geometric structure on the conemanifold $\mathbb{T}_{p/9}$,
 compatible with its as Seifert structure (fibres must be geodesics).
An analogous plot is contained in \cite{K1984}.

\subsection{Volume of the cone-manifold}

To compute the volume of a family of spherical or hyperbolic cone manifold a
normalized version of the metric should be used. The normalization for $%
X_{(S,S)}$, $S\neq 0$, consists in considering $|S|=1$. Then the metric
matrix in Seifert coordinates, (\ref{maqSc}), for the normalized $X_{(S,S)}$%
, $S>0$, is the following

\begin{equation}
Q_{1}((\mu ,\nu ,\theta )=\left[
\begin{array}{ccc}
\frac{\nu ^{2}+1}{\left( 1-\left( \mu ^{2}+\nu ^{2}\right) \right) ^{2}} & -%
\frac{\mu \nu }{\left( 1-\left( \mu ^{2}+\nu ^{2}\right) \right) ^{2}} &
\frac{\nu }{1-\left( \mu ^{2}+\nu ^{2}\right) } \\
&  &  \\
\ast  & \frac{\mu ^{2}+1}{\left( 1-\left( \mu ^{2}+\nu ^{2}\right) \right)
^{2}} & -\frac{\mu }{1-\left( \mu ^{2}+\nu ^{2}\right) } \\
&  &  \\
\ast  & \ast  & 1%
\end{array}%
\right] ,  \label{maqSc1}
\end{equation}%
and for the normalized $X_{(S,S)}$, when $S<0$, is
\begin{equation}
Q_{-1}((\mu ,\nu ,\theta )=\left[
\begin{array}{ccc}
\frac{\nu ^{2}+1}{\left( 1+\left( \mu ^{2}+\nu ^{2}\right) \right) ^{2}} & -%
\frac{\mu \nu }{\left( 1+\left( \mu ^{2}+\nu ^{2}\right) \right) ^{2}} & -%
\frac{\nu }{1+\left( \mu ^{2}+\nu ^{2}\right) } \\
&  &  \\
\ast  & \frac{\mu ^{2}+1}{\left( 1+\left( \mu ^{2}+\nu ^{2}\right) \right)
^{2}} & \frac{\mu }{1+\left( \mu ^{2}+\nu ^{2}\right) } \\
&  &  \\
\ast  & \ast  & 1%
\end{array}%
\right] .  \label{maqScm1}
\end{equation}%
Their determinants are respectively
\begin{equation*}
D(Q_{1})=\frac{1}{\left( 1-\left( \mu ^{2}+\nu ^{2}\right) \right) ^{4}}%
,\quad \quad D(Q_{-1})=\frac{1}{\left( 1+\left( \mu ^{2}+\nu ^{2}\right)
\right) ^{4}}
\end{equation*}

Suppose $S>0$. The volume form in the Seifert coordinates for the normalized
$X_{(S,S)}$, ($X_{(1,1)}$), is
\begin{equation*}
dv=\sqrt{|D(Q_{1})|}d\mu d\nu d\zeta =\frac{1}{\left( 1-\left( \mu ^{2}+\nu
^{2}\right) \right) ^{4}}d\mu d\nu d\zeta
\end{equation*}%
Therefore
\begin{eqnarray*}
V(\mathbb{T}_{p/q},r) &=&\int_{D(\alpha ,\theta )}dv=\int_{D(\alpha ,\theta
)}\frac{1}{\left( 1-\left( \mu ^{2}+\nu ^{2}\right) \right) ^{4}}d\mu d\nu
d\zeta  \\
&=&\int_{0}^{\zeta _{0}}d\zeta \int_{\Delta }\frac{1}{\left( 1-\left( \mu
^{2}+\nu ^{2}\right) \right) ^{4}}d\mu d\nu =\zeta _{0}\times \frac{1}{4}%
\text{Area of }\Delta
\end{eqnarray*}%
where $\zeta _{0}$ and $\Delta $ are, respectively, the height and the base
of $D(\alpha ,\theta )$. The geometry in the base is hyperbolic. By
construction of the fundamental domain $D(\alpha ,\theta )$, the base is a
hyperbolic triangle with angles $2\pi /3,\alpha ,\alpha $, and the height $%
\zeta _{0}$ is $|6\theta -\pi |$, where (\ref{ealfayteta})
\begin{equation*}
\alpha =\frac{\pi }{r(p+6q)}\qquad \theta =\frac{\pi (1-qr)}{pr}
\end{equation*}

Therefore
\begin{equation*}
\text{Area of }\Delta = \pi -2\pi /3 -2 \alpha =\pi /3-\frac{2\pi}{r(p+6q)}=
\frac{\pi r(p+6q)-6\pi}{3r(p+6q)}
\end{equation*}

\begin{equation*}
V(\mathbb{T}_{p/q},r)=\left|\frac{6\pi (1-qr)}{pr}-\pi\right|\left(\frac{%
\pi(r(p+6q)-6\pi )}{3r(p+6q)}\right)\frac{1}{4}=\left|- \frac{\pi ^2(p r+6 q r-6)^2%
}{12 p r^2 (p+6 q)}\right|.
\end{equation*}

Suppose $S<0$. The volume form in the Seifert coordinates for the normalized
$X_{(S,S)}$, ($X_{(-1,-1)}$), is
\begin{equation*}
dv=\sqrt{|D(Q_{-1})|}d\mu d\nu d\zeta =\frac{1}{\left( 1+\left( \mu ^{2}+\nu
^{2}\right)\right)^4}d\mu d\nu d\zeta
\end{equation*}
Therefore
\begin{eqnarray*}
V(\mathbb{T}_{p/q},r)&=& \int_{D(\alpha ,\theta )}dv= \int_{D(\alpha ,\theta
)}\frac{1}{\left( 1+\left( \mu ^{2}+\nu ^{2}\right)\right)^4}d\mu d\nu d\zeta
\\
&=& \int_{0}^{\zeta _{0}}d\zeta \int_{\Delta } \frac{1}{\left( 1+\left( \mu
^{2}+\nu ^{2}\right)\right)^4}d\mu d\nu =\zeta _{0}\times \frac{1}{4}\text{%
Area of }\Delta
\end{eqnarray*}
where $\zeta _{0}$ and $\Delta$ are respectively the height and the base of $%
D(\alpha,\theta)$. The geometry in the base is spherical. By construction of
the fundamental domain $D(\alpha,\theta)$, the base is a spherical triangle
with angles $2\pi /3, \alpha ,\alpha $, and the height $\zeta_{0}$ is $\left|
6\theta -\pi \right| $, where (\ref{ealfayteta})
\begin{equation*}
\alpha =\frac{\pi}{r(p+6q)}\qquad \theta= \frac{\pi (1-qr)}{pr}
\end{equation*}

Therefore
\begin{equation*}
\text{Area of }\Delta = 2\pi /3 +2 \alpha -\pi =\frac{2\pi}{r(p+6q)}-\pi /3=%
\frac{6\pi -\pi(r(p+6q))}{3r(p+6q)}
\end{equation*}

\begin{equation*}
V(\mathbb{T}_{p/q},r)=\left| \frac{6\pi (1-qr)}{pr}-\pi \right| \left( \frac{%
6\pi -\pi (r(p+6q))}{3r(p+6q)}\right) \frac{1}{4}=\left| \frac{\pi
^{2}(pr+6qr-6)^{2}}{12pr^{2}(p+6q)}\right| .
\end{equation*}

\begin{remark}
Observe that the volume $V(\mathbb{T}_{p/q},r)$, as could be suggested by the construction, is not the product of the area of the base by the height $\zeta_{0}$. There is a correcting factor of $(1/4)$ because the geometry is a twisted geometry.
\end{remark}

The following examples offer the volume of the conemanifolds represented by
some points in the plot $\mathcal{P}_{1}$.

\textbf{Example 1. $(-1,1)\in \mathcal{P}_{1}$, $(K=1,\,p=-1,\,q=1,\,r=1$).
Spherical geometry.}

\begin{equation*}
V(\mathbb{T}_{-1},1)=\frac{2\pi ^{2}}{120}
\end{equation*}

This manifold is the Poincar\'{e} spherical manifold, obtained by $(-1)$%
-surgery in the left-handed trefoil knot. This manifold is the quotient of
the sphere $\mathbb{S}^{3}$ by the binary icosahedral group $I\ast $, group
with $120$ elements. The volume of $\mathbb{S}^{3}$ is $2\pi ^{2}$.

\textbf{Example 2. $(5,0)\in \mathcal{P}_{1}$, $(K=1,\,p=1,\,q=0,\,r=5)$.
Spherical geometry.}

\begin{equation*}
V(\mathbb{T}_{\infty},5)=\frac{2\pi ^{2}}{600}=\frac{2\pi ^{2}}{120}\left/
5\right.
\end{equation*}

This result is coherent with the fact that the $5$-fold cyclic covering of $%
\mathbb{S}^{3}$ branched over the trefoil knot is the Poincar\'{e} spherical
manifold.

\textbf{Example 3. $(2,0)\in \mathcal{P}_{1}$, $(K=1,\,p=1,\,q=0,\,r=2)$.
Spherical geometry.}

\begin{equation*}
V(\mathbb{T}_{\infty},2)=\frac{2\pi ^{2}}{6}
\end{equation*}

This result is coherent with the fact that the double covering of $\mathbb{S}%
^{3}$ branched over the trefoil knot is the lens $L(3,1)$ which has $\mathbb{%
S}^{3}$ as its universal cover with $3$ sheets and therefore it has volume $%
\frac{2\pi ^{2}}{3}=2V(\mathbb{T}_{\infty },2)$.

\textbf{Example 4. $(3,0)\in \mathcal{P}_{1}$, $(K=1,\,p=1,\,q=0,\,r=3)$.
Spherical geometry.}

\begin{equation*}
V(\mathbb{T}_{\infty},3)=\frac{2\pi ^{2}}{24}
\end{equation*}
\begin{figure}[h]
\begin{center}
\epsfig{file=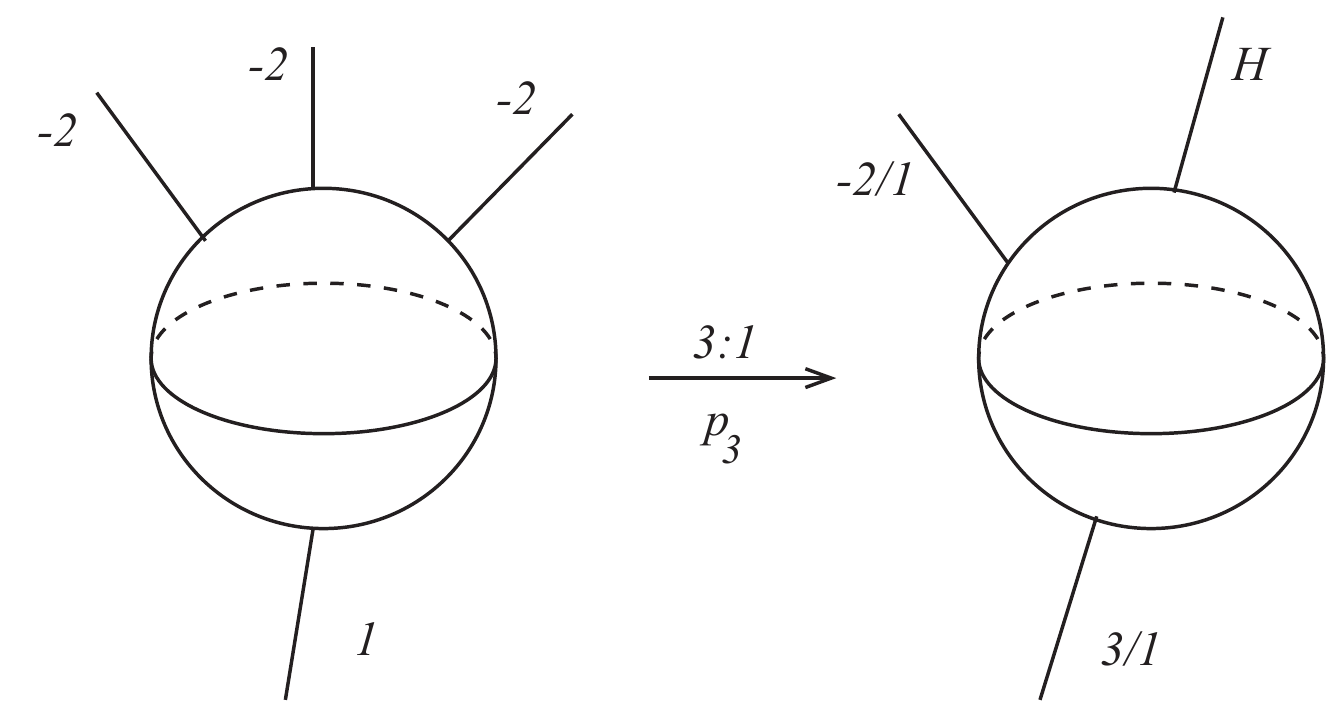,height=4cm}
\end{center}
\caption{The covering $p_{3}:(Oo0|1;-2,-2,-2)\longrightarrow
(Oo0|-1;2/1,3/1) $.}
\label{fkinf3}
\end{figure}

In Figure \ref{fkinf3} is depicted a scheme (compare \cite[Figure 12 p.146]{Monte1987}) of the
covering
\begin{equation*}
p_{3}:(Oo0|1;-2,-2,-2)\longrightarrow (Oo0|-1;2/1,3/1)
\end{equation*}%
branched over an ordinary fibre $H$ which is a trefoil knot. In Figure \ref%
{fkinf3} we have depicted the bases (both $\mathbb{S}^{2}$) of the Seifert
structures involved together with the exceptional fibres and an ordinary
fibre.
\begin{figure}[h]
\begin{center}
\epsfig{file=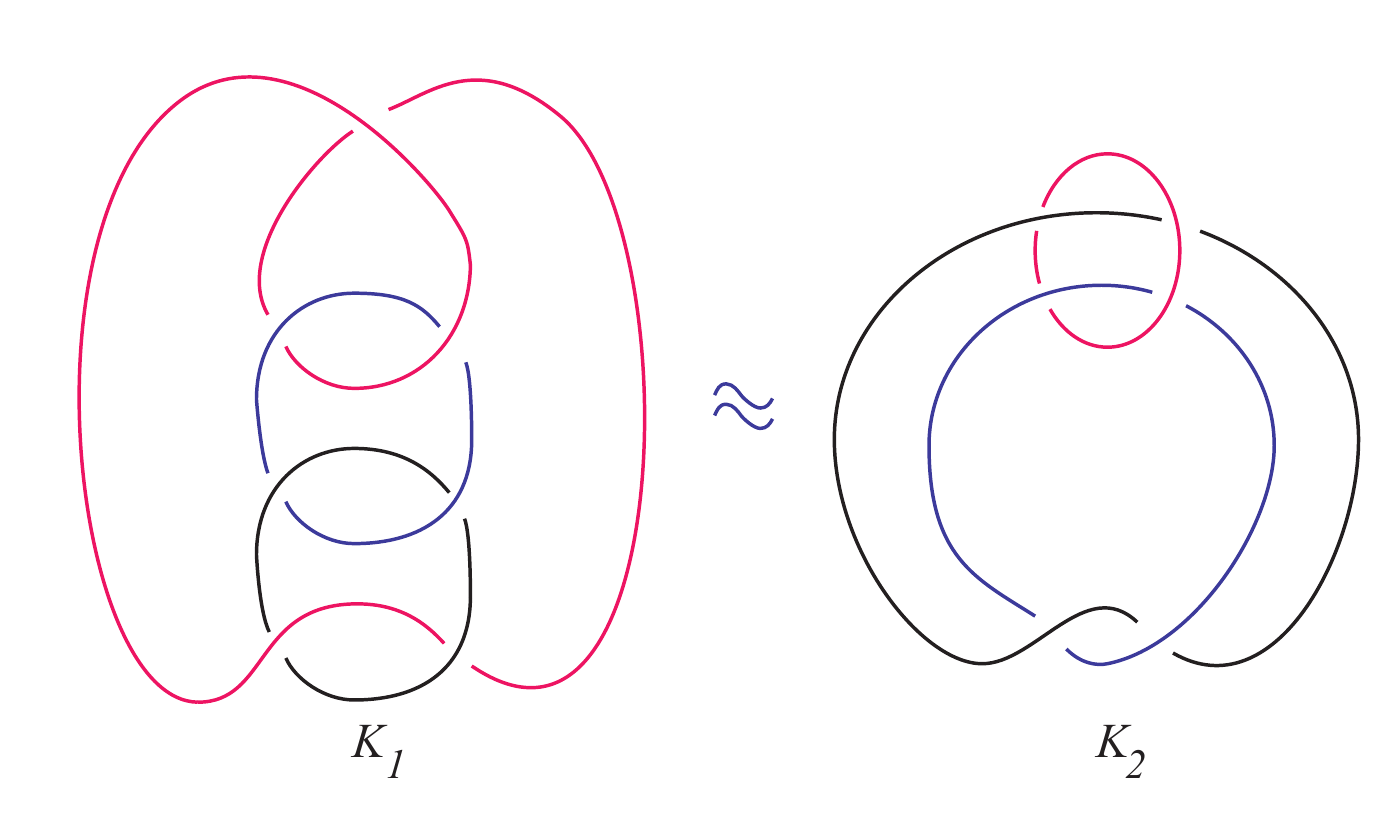,height=4cm}
\end{center}
\caption{The knots $K_{1}$ and $K_{2}$.}
\label{fk1k2}
\end{figure}

The map $p_{3}$ is the 3-fold cyclic covering of $\mathbb{S}^{3}$ branched
over the trefoil knot. On the other hand the Seifert manifolds $%
(Oo0|1;-2,-2,-2)$ and $(On1|2)$ are the 2-fold covering of $\mathbb{S}^{3}$
branched over the knots $K_{1}$ and $K_{2}$, depicted in Figure \ref{fk1k2}
respectively (\cite{Monte1973} and \cite{Monte1987}). Figure \ref{fk1k2} proves that the links $K_{1}$ and $K_{2}$ are
different diagrams of the same knot, therefore the Seifert manifolds are
equal:
\begin{equation*}
(Oo0|1;-2,-2,-2)=(On1|2).
\end{equation*}

This is the \emph{quaternionic space}, having $\mathbb{S}^{3}$ as universal
covering with $8$ sheets. Note that the group acting in $\mathbb{S}^{3}$
with quotient the quaternionic space $(On1|2)$ is the binary dihedral $%
<2\,2\,2>$ with 24 elements. Therefore $(\mathbb{T}_{\infty },3)$ is the
quotient of $\mathbb{R}P^{3}$ by the action of the tetrahedral group
(\cite{Monte1987}).

\textbf{Example 5. $(4,0)\in \mathcal{P}_{1}$, $(K=1,\,p=1,\,q=0,\,r=4)$.
Spherical geometry.}

\begin{equation*}
V(\mathbb{T}_{\infty},4)=\frac{2\pi ^{2}}{96}
\end{equation*}

The universal covering of $(\mathbb{T}_{\infty},4) $ factors through the octahedral manifold $\mathbb{S}^{3}/T^{\ast }$, where $T^{\ast }$ is the binary tetrahedral
group, and also, through the manifold $(Oo0|-4)=(Oo0|0;\overset{8}{%
\underbrace{1,...,1}},\overset{12}{\overbrace{-1,...,-1}})$.
\begin{eqnarray*}
\mathbb{S}^{3} &\overset{24:1}{\longrightarrow }&\mathbb{S}^{3}/T^{\ast
}=(Oo0|-2;3,3,3,3) \\
{}_{4:1}\downarrow & {}_{96:1}\searrow  &\downarrow {}_{4:1} \\
(Oo0|-4) &\overset{24:1}{\longrightarrow }&(\mathbb{T}_{\infty
},4)=(Oo0|0;-2,3)=\mathbb{S}^{3}
\end{eqnarray*}

\textbf{Example 6. $(\infty ,0)\in \mathcal{P}_{1}$, $(p=1,\,q=0,\,r=\infty)$%
. $\widetilde{SL(2,\mathbb{R})}$ geometry.}

\begin{equation*}
V(\mathbb{T}_{\infty},\infty)=\frac{\pi ^{2}}{12}
\end{equation*}
The geometry in the conemanifolds $(\mathbb{T}_{\infty},r)$, $r>6$ is $%
\widetilde{SL(2,\mathbb{R})}$. The volume of $(\mathbb{T}_{\infty},\infty)$
will be the limit of $V (\mathbb{T}_{\infty},r)$, $r\to \infty$:
\begin{equation*}
V(\mathbb{T}_{\infty},\infty)=\lim _{r\to \infty}V(\mathbb{T}%
_{\infty},r)=\lim _{r\to \infty}\frac{\pi^ {2}(-6+r)^{2}}{12r^{2}}=\frac{%
\pi^{2}}{12}
\end{equation*}

\textbf{Example 7. $(6,0)\in \mathcal{P}_{1}$, $(p=1,\,q=0,\,r=6)$. $X_{1}$
geometry}

$(\mathbb{T}_{\infty },6)$ is the Seifert manifold $%
(Oo0|-1;(2/1),(3/1),(6/1))$ according to (\ref{eseifertnil}). The singular
fibre is the core of the surgery (here the trefoil knot) with angle $\beta
=2\pi /6$. The $6$-fold covering of $\mathbb{S}^{3}$ branched over the
trefoil knot is the Seifert manifold $(Oo1|1)$ (\cite{Monte1973})
\begin{equation*}
V(\mathbb{T}_{\infty },6)=\lim_{r\rightarrow 6}V(\mathbb{T}_{\infty
},r)=\lim_{r\rightarrow 6}\frac{\pi ^{2}(-6+r)^{2}}{12r^{2}}=0
\end{equation*}%
.

\subsection{Plotting the geometric structures on the manifolds obtained by
surgery on the trefoil knot}

We are studying cone-manifold structures in manifolds obtained by
Dehn-surgery on the Trefoil knot in $\mathbb{S}^{3}$. These manifolds are
Seifert manifolds with geometric structure and with the core of the surgery
as singular set. It seems natural to adopt the following new notation.

\begin{notation}
Let
\begin{equation*}
S(m,n)=\left( Oo0|-1;(2,1),(3,1),(m,n)\right) \qquad m\geq 0
\end{equation*}%
be the Seifert manifold $\left( Oo0|-1;(2,1),(3,1),(\frac{m}{r},\frac{n}{r}%
)\right) $, where $r=\gcd (m,n)$, with conic singularity along the
exceptional fibre $(\frac{m}{r},\frac{n}{r})$ of angle $2\pi /r$.
\end{notation}

Recall that the holonomy of $S(m,n)$ is generated by $a(\alpha ,\theta )$ (%
\ref{eaAij}) and $b(\alpha ,\theta )$ (\ref{ebBij}), where
\begin{equation*}
\left\{
\begin{array}{c}
\frac{2\pi }{r}=2\alpha \frac{m}{r} \\
\\
\frac{6\theta -\pi }{\theta -\alpha }=\frac{m}{n}%
\end{array}%
\right.
\end{equation*}%
if $\alpha \neq \pi /6$. The geometry is $\widetilde{SL(2,\mathbb{R})}$ for $%
0\leq \alpha <\frac{\pi }{6}$; Nil  for $\alpha =\frac{\pi }{6}$; and spherical for $\frac{\pi }{6}<\alpha <%
\frac{5\pi }{6}$.

Because
\begin{equation*}
\frac{m}{n}=6+\frac{p}{q}=\frac{6q+p}{p}\quad \Longrightarrow \quad 8\frac{q%
}{p}+1=(\frac{m+2n}{m-6n}),
\end{equation*}%
where $p/q$ is the datum of the surgery, the holonomy (\ref{eaqpbqp}) for $%
\alpha =\frac{\pi }{6}$ is given by the matrices
\begin{eqnarray*}
a_{m/n} &=&\left[
\begin{array}{cccc}
\frac{1}{2} & -\frac{\sqrt{3}}{2} & 0 & \frac{1}{2} \\
\frac{\sqrt{3}}{2} & \frac{1}{2} & 0 & -\frac{\sqrt{3}}{2} \\
\frac{\sqrt{3}}{2} & -\frac{1}{2} & 1 & -\frac{1}{2}\sqrt{3}(\frac{m+2n}{m-6n%
}) \\
0 & 0 & 0 & 1%
\end{array}%
\right]  \\
&& \\
b_{m/n} &=&\left[
\begin{array}{cccc}
\frac{1}{2} & -\frac{\sqrt{3}}{2} & 0 & -\frac{1}{2} \\
\frac{\sqrt{3}}{2} & \frac{1}{2} & 0 & \frac{\sqrt{3}}{2} \\
-\frac{\sqrt{3}}{2} & \frac{1}{2} & 1 & -\frac{1}{2}\sqrt{3}(\frac{m+2n}{m-6n%
}) \\
0 & 0 & 0 & 1%
\end{array}%
\right]
\end{eqnarray*}

With the notation $S(m,n)$
\begin{eqnarray*}
\left( \mathbb{T}_{p/q},r\right) &=& S(r(6q+p),rq) \\
S(m,n) &=& \left( \mathbb{T}_{(m-6n)/n},m.c.d.(m,n)\right) \\
\alpha &=& \frac{\pi}{m} \\
\theta &=& \pi \frac{n-1}{6n-m} \\
V(S(m,n)) &=& -\frac{\pi ^{2}}{S}\frac{(m-6)^{2}}{12m(m-6n)} \\
V(S(\infty m,\infty n)) &=& -\frac{\pi ^{2}}{S}\frac{m}{12(m-6n)}
\end{eqnarray*}

\begin{equation*}
\left\{
\begin{array}{c}
6<m\leq \infty  \\
m=6 \\
\frac{6}{5}<m<6%
\end{array}%
\right\} \quad \text{geometry}\quad \left\{
\begin{array}{c}
\widetilde{{SL(2,\mathbb{R})}} \\
\text{Nil} \\
\text{spherical}%
\end{array}\right\}
\end{equation*}%
One of the advantages of using the notation $S(m,n)$ is that the information
about the different geometries on the same Seifert manifold can best be seen
in the following plot $\mathcal{P}_{2}$ (Figure \ref{fplot2}), written in
the spirit of Thurston (see \cite[p. 4.21]{Thurston1}). The points in the plot bearing the symbol $\blacklozenge $ correspond to
points $(x,y)$ such that $x$ and $y$ are non-negative integers with $\gcd
(x,y)=1$. They represent the manifolds $S(x,y)$ with no singularity. The
points in the line $x/y$ connecting the origin $(0,0)$ with the point $(x,y)$
represent $S(x\times r,y\times r)$. This is a cone-manifold with underlying $%
3$-manifold the common Seifert manifold $S(x,y)$, but the cone angle $2\pi /r
$ varies from zero to infinity. The left
vertical dashed line $ \mathcal{U}$ corresponds to the set of upper limits $ \mathcal{U}$ in the plot $ \mathcal{P}_{1}$ and it separates the left region, with unknown, if any, geometry compatible with the  Seifert structure in the sense that fibres must be geodesics,
from the region whose points represent spherical geometry. Points in the right
vertical dashed line $\mathcal{L}$, which correspond to the lower limits $\mathcal{L}$ in plot $\mathcal{P}_{1}$, represent  Nil geometry and it separates the region whose points represent
spherical geometry from  the region whose points represent $\widetilde{{SL(2,\mathbb{R})}}$ geometry.

\begin{figure}[h]
\begin{center}
\epsfig{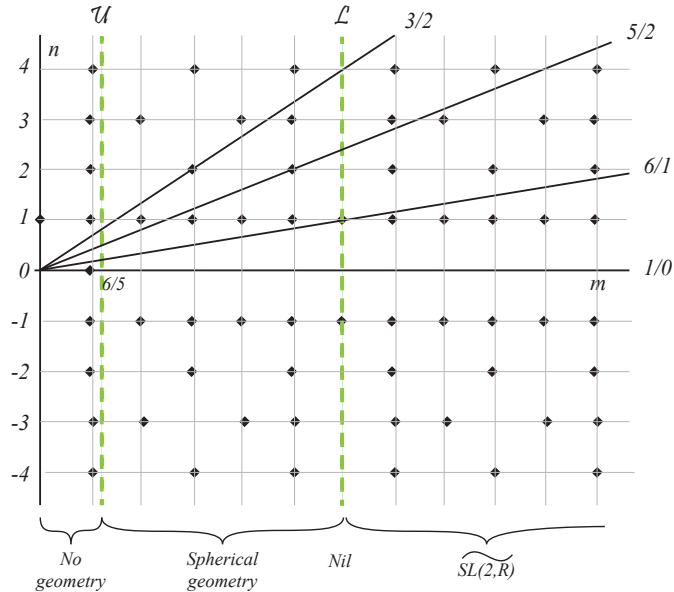}
\end{center}
\caption{The plot $\mathcal{P}_{2}$.}
\label{fplot2}
\end{figure}

\begin{definition}
The \emph{limit of sphericity} $\gamma _{e}$ of the Seifert manifold $S(x,y)$ is the cone angle of the cone-manifold structure corresponding to the intersection point  $ \mathcal{U}\cap (x/y)$.

The \emph{Nil angle} $\gamma _{N}$ of the Seifert manifold $S(x,y)$ is the cone angle of the cone-manifold structure corresponding to the intersection point $ \mathcal{L}\cap (x/y)$. Their value as a function of $(x,y)$ is the following
\[ \gamma _{e}=\frac{5x\pi}{3} \quad \quad \gamma _{N}=\frac{x\pi }{3}
\]
\end{definition}

Next we summarize some affirmations that are deduced easily from  the plot $\mathcal{P}_{2}$.
 \begin{summary}
 Let $x$ and $y$ be non-negative integers with $\gcd
(x,y)=1$.
\begin{itemize}
  \item The geometric cone-manifold structure with cone angle $\gamma$ in $S(x,y)$ is spherical for $\gamma _{e}>\gamma >\gamma _{N}$, Nil for $\gamma =\gamma _{N}$, and $\widetilde{{SL(2,\mathbb{R})}}$ for $\gamma _{N}>\gamma $.
  \item The limit of sphericity $\gamma _{e}$ is equal to five times the Nil angle $\gamma _{N}$
  \item There exits infinity many non-singular Nil geometric structures: \newline $S(6,y)=\left( \mathbb{T}_{(6-6y)/y},1\right)$, where $\gcd (6,y)=1$.
  \item There exits infinity many Nil orbifold geometric structures: \begin{enumerate}
                                                                       \item $S(6,3y)=\left( \mathbb{T}_{(2-6y)/y},3\right)$, where $\gcd (2,y)=1$,
                                                                       \item  $S(6,2y)=\left( \mathbb{T}_{(3-6y)/y},2\right)$, where $\gcd (3,y)=1$.
                                                                     \end{enumerate}
  \item If $\left( \mathbb{T}_{p/q},r\right) = S(r(6q+p),rq)$ has a spherical orbifold structure, then $2\leq r \leq 5$.
\end{itemize}
 \end{summary}
Lets analyze some points in the region $m\in [0, 6/5]$. The vertical axis $x=0$ yields $S(0,y)$, corresponding to the manifold $%
L(2,1)\sharp L(3,1)$ with no geometry. Between this axis and the vertical
line $m=6/5$ are placed the Seifert manifolds (no singular)
\begin{equation*}
S(1,n)=(Oo0|-1;(2,1),(3,1),(1,n))=(Oo0|n-1;(2,1),(3,1))
\end{equation*}
which are the lens $-L(6n-1,2n-1)$ for $n\neq 0$, and $\mathbb{S}^{3}$ for $%
n=0$. It is known that these Seifert manifolds (lens spaces) support
spherical geometry but the fibre $(1,n)$ of the Seifert fibration $%
(Oo0|-1;(2,1),(3,1),(1,n))$ is not a geodesic of that geometry. For instance, in $S(1,0)=\mathbb{S}^{3}$,  the fibre $(1,0)$ is a regular
fibre, the left Trefoil knot, which obviously
is not a geodesic in $\mathbb{S}^{3}$. Recall that we are
studying geometric structures in the Seifert manifold $S(x,y)$ such that the
fibre $(x,y)$ is a geodesic (singular or not), that is Seifert geometric conemanifolds.


\begin{thebibliography}{9}

\bibitem{K1984}S. Kojima. \newblock A construction of geometric structures on Seifert fibered spaces. \newblock {\em  J. Math. Soc. Japan}, 36 nº 3: 483--495, 1984.

\bibitem{LM2014} M. T. Lozano, and J.~M. Montesinos-Amilibia. \newblock On
the degeneration of some 3-manifold geometries via unit groups of quaternion
algebras. \newblock {\em Preprint}, 2014.

\bibitem{MatsuMonte1991}Y. Matsumoto, and J.~M. Montesinos-Amilibia. \newblock A proof of Thurston's uniformization theorem of geometric orbifolds. \newblock {\em Tokyo J. Math.}, 14 nº 1:181--196, 1991.

\bibitem{Monte1973} J.~M. Montesinos-Amilibia. \newblock Seifert manifolds that are ramified two-sheeted cyclic coverings. (Spanish) \newblock {\em Bol. Soc .Mat. Mexicana},  (2) 18 :1--32, 1973.

\bibitem{Monte1987} J.~M. Montesinos-Amilibia. \newblock Classical tessellations and three-manifolds. Springer-Verlag, Berlin, 1987 xviii+230 pp.

\bibitem{S1933} H.~Seifert. \newblock Topologie dreidimensionaler gefaserter
r\"{a}ume. \newblock {\em Acta Math.}, 60:147--238, 1933.

\bibitem{Thurston1} W.P. Thurston. \newblock The geometry and topology of three-manifolds.  Princeton University Lectures, 1976-1977, available at htt://library.msri.org/books/gt3m/

\bibitem{Thurston} W. P. Thurston. \newblock Three-dimensional geometry and topology. Vol 1. Edited by Silvio Levy. Princeton Mathematical Series, 35. Princeton University Prtess. Princeton, NJ, 1997. x+311 pp.
\end{thebibliography}
\end{document}